\documentclass[letterpaper,12pt]{article}

\usepackage{microtype,booktabs}
\usepackage{amsmath,amssymb,amsfonts,amsthm,bm}
\usepackage{authblk}
\usepackage{mathrsfs}
\usepackage{indentfirst}
\usepackage[pdftex]{color,graphicx}
\usepackage[pdftex,bookmarks,unicode,colorlinks,linkcolor=blue]{hyperref}
\usepackage{tikz}
\usepackage{subfigure}
\usetikzlibrary{arrows, decorations.pathmorphing, backgrounds, positioning, fit, petri, automata,cd}
\usepackage{rotating}
\numberwithin{equation}{section}

\usepackage{todonotes}

\usepackage{tabularx} % for 'tabularx' env. and 'X' col. type
\usepackage{ragged2e} % for \RaggedRight macro
\usepackage{booktabs}

\textheight 220mm \textwidth 160mm \topmargin 0.5cm \oddsidemargin
0pt \evensidemargin 0pt
\parskip=2mm
\voffset -2cm

\title{\Large The stochastic Hamilton--Jacobi--Bellman equation on Jacobi structures}

\date{}
\author[$\dag$]{\normalsize Pingyuan Wei}
\author[$\ddag$]{Qiao Huang}%\thanks{Corresponding author: lzhang@dlut.edu.cn}}
\author[$\S$]{Jinqiao Duan}
\affil[$\dag$]{\small School of Mathematics, Southeast University, Nanjing 211189, China; Beijing International Center for Mathematical Research, Peking University, Beijing 100871, China. (Email:pwei@seu.edu.cn)}
\affil[$\ddag$]{School of Mathematics, Southeast University, Nanjing 211189, China. (Email:qiao.huang@seu.edu.cn)}
\affil[$\S$]{Department of Mathematics \& Guangdong Provincial Key Laboratory of Mathematical and Neural Dynamical Systems, 
Great Bay University, Dongguan 523000, China.
(Email:duan@gbu.edu.cn)}

\newtheorem{thm}{Theorem}[section]
\newtheorem{theorem}{Theorem}[section]

\newtheorem{corollary}[thm]{Corollary}

\newtheorem{lemma}[thm]{Lemma}

\newtheorem{proposition}[thm]{Proposition}
\theoremstyle{remark}

\newtheorem{remark}[thm]{Remark}
\theoremstyle{definition}
\newtheorem{definition}[thm]{Definition}
\newtheorem{example}[thm]{Example}

\newcommand{\pt}{\partial}
\newcommand{\R}{\mathbb R}
\renewcommand{\d}{\boldsymbol{d}}

\definecolor{colorP}{HTML}{6600FF}%purple
\definecolor{colorB}{HTML}{0099FF}%blue
\definecolor{colorG}{HTML}{00CC00}%green
\definecolor{colorN}{HTML}{CC9900}%brown
\definecolor{colorR}{HTML}{FF0000}%red

\begin{document}
\maketitle

\begin{abstract}
%As is well known, Jacobi structures naturally generalize Poisson structures, including symplectic, cosymplectic, and Lie--Poisson structures. However, interesting geometric structures such as contact and locally conformal symplectic manifolds also admit Jacobi structures but are not Poisson. In this paper, we employ global stochastic analysis techniques, originally developed by Meyer and Schwartz, to rigorously define stochastic Hamiltonian systems on Jacobi manifolds. We then develop a stochastic Hamilton--Jacobi--Bellman (HJB) theory as an alternative approach to describing the dynamics. We highlight that many results in this paper generalize those of Bismut \cite{Bismut1980,Bismut1981} and L\'azaro-Cam\'i $\&$ Ortega \cite{Lazaro2008,LazaroCam2009}, while some inspirations for the geometric Hamilton--Jacobi theory in stochastic settings can be traced back to the deterministic works of Abraham $\&$ Marsden \cite{Marsden1978}, de Le\'on $\&$ Sard\'on \cite{deLeon2017}, Esen et al. \cite{Esen2021} and related literature.

Jacobi structures are known to generalize Poisson structures, encompassing symplectic, cosymplectic, and Lie--Poisson manifolds. Notably, other intriguing geometric structures---such as contact and locally conformal symplectic manifolds---also admit Jacobi structures but do not belong to the Poisson category. In this paper, we employ global stochastic analysis techniques, initially developed by Meyer and Schwartz, to rigorously introduce stochastic Hamiltonian systems on Jacobi manifolds. We then propose a stochastic Hamilton--Jacobi--Bellman (HJB) framework as an alternative perspective on the underlying dynamics. We emphasize that many of our results extend the work of Bismut \cite{Bismut1980,Bismut1981} and L\'azaro-Cam\'i \& Ortega \cite{Lazaro2008,LazaroCam2009}. Furthermore, aspects of our geometric Hamilton--Jacobi theory in the stochastic setting draw inspiration from the deterministic contributions of Abraham \& Marsden \cite{Marsden1978}, de Le\'on \& Sard\'on \cite{deLeon2017}, Esen et al. \cite{Esen2021}, and related literature.

\end{abstract}

\tableofcontents

\section{Introduction}\label{sec1}
Stochastic geometric mechanics provides a comprehensive framework for the stochastic deformation of classical geometric mechanics, encompassing the study of symmetries, Hamiltonian and Lagrangian mechanics, and the Hamilton--Jacobi theory, among other areas. Over the past few decades, interest in this field has grown significantly (see \cite{Bismut1980,Bismut1981,Lazaro2008,LazaroCam2009,Cruzeiro2018,WeiChaoDuan2019,WeiWang2021,HuangZambrini2,HuangZambrini1} and references therein). This framework addresses the stochastic dynamics of diverse systems, including particles, rigid bodies, continuous media (e.g., fluids, plasmas, and elastic materials), and field theories (e.g., electromagnetism and gravity). The associated theories play essential roles in classical and quantum mechanics, control theory, and a wide range of applications across physics, engineering, chemistry, and biology \cite{Marsden1978,Arnold1989,Marsden1999,Applebaum2009,Gliklikh2011,Duan2015}.

% \blue{There are two important approaches of stochastic geometric mechanics: one is Bismut, and the other one is sometimes known as ``stochastic deformation"....}

In particular, interesting quick historical overviews as well as the motivations of the studies for stochastic Hamiltonian systems with continuous semimartingsles on Poisson manifolds can be found in L\'azaro-Cam\'i $\&$ Ortega \cite{Lazaro2008} and the references therein. For some recent progress of stochastic Hamiltonian systems with jump-type L\'evy motions (whose sample paths are càdlàg), we refer to \cite{Kolokoltsov2004,WeiChaoDuan2019,ChaoWeiDuan2021,ZhanDuan2024}. %Wei, Chao $\&$ Duan \cite{WeiChaoDuan2019,ChaoWeiDuan2021} and Zhan \textit{et al.} \cite{ZhanDuan2024}.
In additional, contact systems are getting a great popularity in recent years as they are regarded as an appropriate scenario to discuss dissipation dynamics and several other types of physical systems \cite{Grmela2014,Bravetti2017,Bravetti2019,deLeon2019JMP,Gaset2020a,Gaset2020b}. It is quite interesting and reasonable to consider the stochastic contact Hamiltonian systems (on contact manifolds), which allow to describe the mechanical systems with both dissipative forces and random noises \cite{WeiWang2021}.

We point out that even contact geometry is referred to as the ``odd-dimensional cousin” of symplectic geometry \cite{Arnold1989,Geiges2008}, the contact Hamiltonian formulations exhibit very different characteristics to their counterparts in symplectic manifolds. To solve the contact problems, one way is to consider the symplectification of the contact structure and, then, discuss the problem in the setting of homogeneous symplectic manifolds. But in general, this way is very limited. And how to deal with the problems directly in the contact settings is still a question worth thinking about. It is also worth mentioning that the contact structures are indeed Jacobi structures, more general than those of Poisson related to the symplectic ones (see Figure \ref{fig1}).
\begin{figure}[ht]
    \centering
\begin{tikzpicture}
\draw[draw=black, fill=black, fill opacity=0.1, line width=2pt]  circle [x radius=4.945, y radius=3.2];
\draw[draw=colorP, fill=colorP, fill opacity=0.1, line width=2pt] circle [x radius=2.44, y radius=2.2];
\draw[draw=colorR,fill=colorR, fill opacity=0.1, line width=2pt] (1.2,0) circle [x radius=1.1, y radius=1.15];
\draw[draw=colorB,fill=colorB, fill opacity=0.1, line width=2pt] (-1.2,0) circle [x radius=1.1, y radius=1.15];
\draw[draw=colorG,fill=colorG, fill opacity=0.1, line width=2pt] (-3.68,0) circle [x radius=1.1, y radius=1.15];
\draw[draw=colorN,fill=colorN, fill opacity=0.1, line width=2pt] (3.68,0) circle [x radius=1.1, y radius=1.15];
\draw[dashed]  (0,3.5)--(0,-3.4);
\begin{scope}
\node at (0.6, 3.4) { \emph{even}};
\node at (-0.6, 3.448) { \emph{odd}};
\node at (0, 2.7) {\large \bf Jacobi};
\node at (0, -2.7) {\large \bf $(M,\Lambda,E)$};
\node at (0, 1.5) { \bf \textcolor{colorP}{Poisson}};
\node at (0, -1.5) { \bf \textcolor{colorP}{$(M,\Lambda,0)$}};
\node at (-3.68, 0.3) {\scriptsize \bf \textcolor{colorG}{Contact}};
\node at (-3.68, -0.3) {\scriptsize \bf \textcolor{colorG}{$(M,\Lambda_\eta,-\sharp(\eta))$}};
\node at (-1.2, 0.3) {\scriptsize \bf \textcolor{colorB}{Cosymplectic}};
\node at (-1.2, -0.3) {\scriptsize \bf \textcolor{colorB}{$(M,\Lambda_{\Omega,\bar{\eta}},0)$}};
\node at (1.2, 0.3) { \scriptsize \bf \textcolor{colorR}{Symplectic}};
\node at (1.2, -0.3) {\scriptsize \bf \textcolor{colorR}{$(M,\Lambda_\omega,0)$}};
\node at (3.68, 0.3) { \scriptsize\bf \textcolor{colorN}{L.C.S.}};
\node at (3.68, -0.2) { \scriptsize \bf \textcolor{colorN}{$(M,\Lambda_{\bar{\omega}},\sharp(\theta))$}};
\node at (3.68, -0.55) { \scriptsize \bf \textcolor{colorN}{$\theta\neq 0$}};
\end{scope}
\end{tikzpicture}
\caption{Jacobi manifolds (see Table 1 in Appendix D for details of the notations).}
\label{fig1}
\end{figure}
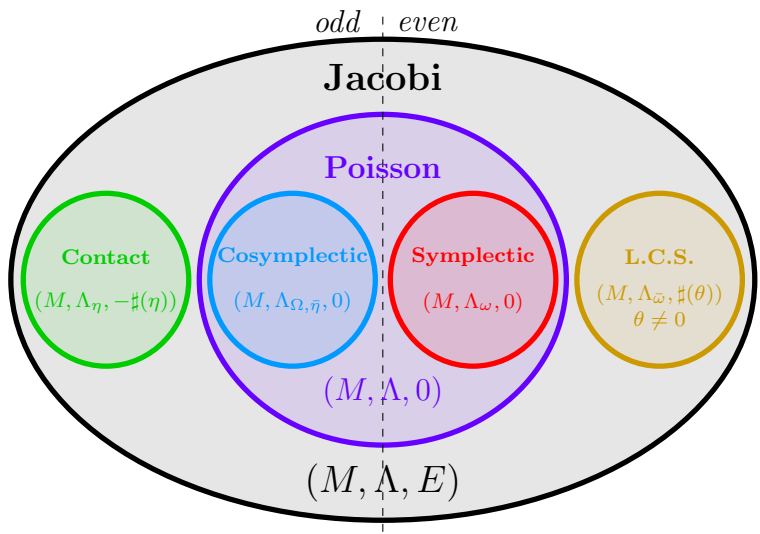
In fact, both Poisson and Jacobi manifolds were coined by Lichnerowicz \cite{Lichnerowicz1977, Lichnerowicz1978}. Compared to the famous Poisson structure, Jacobi structure has received much less attention in the physics literature until recently (The related mathematical literature is also dispersed \cite{Guedira1984,Marle1991,deLeon1997}). Besides the contact structure, there is another interesting class of Jacobi (but not Poisson) structure called locally conformal symplectic (L.C.S.) structure, that has become increasingly popular \cite{Bazzoni2018,Esen2021,Zajac2023}. The systems with L.C.S. structures usually behave like symplectic mechanical ones in some open subsets of the manifolds, although the complete global dynamics fail to be symplectic. In applications, such phenomena appear in some physical systems with nonlocal potentials or those defined by parts and each part  behaves differently and accordingly to different laws (e.g. Gaussian isokinetic dynamics with collisions and the Nos\'e--Hoover dynamics) \cite{Wojtkowski1998}.

Motivated by the above arguments, we believe that a general treatment for stochastic Hamiltonian systems on Jacobi manifolds would be useful in order to highlight the common features and remark on the differences among the many applications of this research field of growing interest. In this paper, we shall use the global stochastic analysis tools, developed originally by Meyer and Schwartz \cite{Meyer1981,Schwartz1982}, to give a strict mathematical definition of such general stochastic Hamiltonian systems. Similar to the deterministic case, the role of a vector field (i.e., the randomized Hamiltonian vector field) with a corresponding function (i.e., the randomized Hamiltonian function) with respect
to its corresponding structure, is primordial to have dynamics. And it will allow us to construct a stochastic version of the geometric Hamilton--Jacobi theory which is equivalent to stochastic Hamiltonian mechanics. 

We recall that the classical Hamilton--Jacobi theory \cite{Marsden1978,Arnold1989,Marsden1999} is designed to construct local coordinates such that the Hamilton equations expressed on these coordinates adopt a very simple form (which may be integrated). In this way, the problem of completely solving a Hamiltonian system reduces to the determination of a sufficiently large number of solutions to the Hamilton--Jacobi partial differential equation (PDE). Despite the difficulties to solve such a PDE, the Hamilton--Jacobi theory provides a remarkably powerful tool to integrate the dynamics of many Hamiltonian systems, and play an important role in classical-quantum-mechanical relationship, in numerical integration schemes (in the sense of structure preserving), in control theory or differential game theory. We also remark that this theory is originally developed by Jacobi from the variational point of view and can be also approached from the point of view of extended phase space, so that a solution to the Hamilton--Jacobi equation is referred to as the principal function or the generating function.

In the last decades,  the Hamilton--Jacobi theory has been interpreted in modern geometric terms \cite{Carinena2006a,Carinena2006b,Ferraro2017} and has been extended to contact systems \cite{deLeon2017,deLeon2021contact,deLeon2023contact}, L.C.S. systems \cite{Esen2021,Zajac2023} and many other different contexts (e.g., nonholonomic systems, singular Lagrangian systems, higher-order systems or field theories) \cite{Ohsawa2009Nonholonomic,deLeon2013singularL,deLeon2020field,deLeon2022geometric}. On the other hand, stochastic versions of Hamilton--Jacobi equations (usually referred to as stochastic Hamilton--Jacobi--Bellman equations) have drawn more and more attention as they are useful when studying various classes of stochastic models in probability theory and mathematical physics \cite{Bismut1980,Kolokoltsov1996,Souganidis1999,Rezakhanlou2000,Kolokoltsov2004,LazaroCam2009}. For example, they can describe the evolution of optimally controlled systems with random dynamics \cite{Kolokoltsov1998,Mikami2021} and are expected to be further applied to in some fields like AI or reinforcement learning \cite{Peyre2019,HuangZambrini1}.

The content of the paper is as follows. Section 2 is devoted to introducing the main ingredients of Jacobi manifold as well as the interpretation of a Hamiltonian system on Jacobi manifold. In Section 3, we formulate the stochastic Hamiltonian systems on Jacobi manifold, and show that the solution flows of such systems preserve the characteristic leaves and the structures of these leaves. In Section 4, we develop a Hamilton--Jacobi (HJ) theory for stochastic Hamiltonian systems on both contact manifold and L.C.S. manifold. 

% We highlight that many results in this paper is indeed a generalization of that in Bismut \cite{Bismut1980,Bismut1981} and L\'azaro-Cam\'i $\&$ Ortega \cite{Lazaro2008,LazaroCam2009}, and some inspirations for the geometric HJ theory in stochastic settings should also go back to the deterministic works of Abraham $\&$ Marsden \cite{Marsden1978} and Cari\~nena et. al. \cite{Carinena2006a,Carinena2006b}. The main novelty of our work is that the model we consider here combines features of the Jacobi structure with stochastic noise.

\section{Preliminaries}
In this preliminary section, we shall review some basic facts about Hamiltonian systems on Jacobi manifold as well as that about stochastic calculus on general smooth manifolds.

\subsection{Jacobi structures \& manifolds}
\begin{definition}
A \textit{Jacobi structure} \cite{Lichnerowicz1978,deLeon1997} on an $m$-dimensional (smooth) manifold $M$ is a pair $(\Lambda, E)$, where $\Lambda\in\mathcal{T}^{2}(M)$ is a bivector field (a skew-symmetric contravariant 2-tensor field) and $E\in \mathfrak{X}(M)$ is a vector field, so that the following identities are satisfied:
\begin{equation}%\label{}
[\Lambda, \Lambda]_{SN} = 2E\wedge \Lambda, \quad \; \mathscr{L}_E\Lambda = [E, \Lambda]_{SN} = 0,%\notag
\end{equation}
where $[\cdot, \cdot]_{SN}$ is the Schouten--Nijenhuis bracket and $\mathscr{L}$ is the Lie derivative operator. The manifold $M$ endowed with a Jacobi structure is called a \textit{Jacobi manifold}, and it is also often referred to as a triple $(M,\Lambda, E)$. 
\end{definition}
Note that the Jacobi structure $(\Lambda, E)$ induces a skew-symmetric bilinear map on the space of smooth functions $C^\infty(M)$, called the \textit{Jacobi bracket} and given by
\begin{equation}\label{Jacobi bracket}
  \{f,g\} = \Lambda(df,dg)+fE(g)-gE(f), \quad  \forall f,g\in C^\infty(M). 
\end{equation}
It satisfies the Jacobi identity and the so-called weak Leibniz rule:
$\mathrm{supp} (\{ f,g\}) \subseteq \mathrm{supp} (f) \cap \mathrm{supp} (g)$. 
It can be regarded as a generalization of the usual Poisson bracket on $C^\infty(M)$, since the only difference is to replace the Leibniz rule by 
$\{f,gh\}=g \{f,h\}+h \{f,g\}+ghE(f)$. 
The space $(C^\infty(M), \{ \cdot,\cdot\})$ is called a local Lie algebra by Kirillov \cite{Kirillov1976}. Conversely, a local Lie algebra on $C^\infty(M)$ defines a Jacobi structure on $M$ \cite{Kirillov1976,Guedira1984}. 
If the vector field $E$ identically vanishes, then the manifold $M$ reduces to a Poisson manifold, and the corresponding bracket $\{ \cdot,\cdot\}$ is a Poisson bracket which is a derivation in each argument.

Clearly, symplectic (as well as cosymplectic, and  Lie--Possion) structures can be regarded as Jacobi structures, as they are all important  examples of Poisson ones. Contact and L.C.S. structures are also Jacobi but they are not Poisson. See Appendix (A-C) for more information on these special Jacobi cases. We also refer to Gu\'edira, Lichnerowicz $\&$ Marle \cite{Dazord1991}, Marle \cite{Marle1991}, de L\'eon, Marrero $\&$ Padr\'on \cite{deLeon1997} for general properties of Jacobi manifolds.

\subsection{Hamiltonian formulations}

The Jacobi structure induces a morphism between the $C^\infty(M)$-module of one-forms $\Omega^1(M)$ and that of vector fields $\mathfrak{X}(M)$, and it is natural to define a (Jacobi-type) Hamiltonian vector field. 

\begin{definition}\label{def-hvf}
Let $(M,\Lambda, E)$ be a Jacobi manifold. Define a mapping by %$\Lambda^\sharp: TM^\ast \to TM$
\begin{equation}\label{Sharp-Lambda}
\Lambda^\sharp : \Omega^1(M)\to \mathfrak{X}(M), \quad  \alpha \mapsto \Lambda(\alpha,\cdot),
\end{equation}
that is, $\Lambda^\sharp(\alpha)(\beta)=\Lambda(\alpha,\beta)$, for any $\alpha,\beta\in \Omega^1(M)$. Given a $C^\infty$ real-valued function $f$ on $M$ which is called a \textit{Hamiltonian function}, %that is $h\in C^\infty(M;\RR)$, 
the vector field $V_f$ defined by
\begin{equation}\label{Hamiltonian-vf}
V_f=\Lambda^\sharp(df)+fE
\end{equation}
is called the \textit{Hamiltonian vector field} associated with $f$. 
\end{definition}

The Jacobi bracket $\{ \cdot, \cdot \}$ defined in \eqref{Jacobi bracket} can be related with the Hamiltonian vector fields by
\begin{equation}\label{Jacobi-Ham-vec}
  \{f,g\} = V_f(g) -gE(f), \quad  \forall f,g\in C^\infty(M).
\end{equation}
It should be noticed that the Hamiltonian vector field associated with the constant function $f\equiv 1$ is just $E$. A direct computation shows that $[V_f,V_g]_{SN}=X_{\{f,g\}}$, $\forall f,g\in C^\infty(M)$ \cite{Lichnerowicz1978,deLeon1997}.

An integral curve $\gamma$ of the Hamiltonian vector field $V_f$ satisfies the so-called \textit{Hamiltonian equation}: $\dot{\gamma}(t) =V_f(\gamma(t))$. To generalize the definition of Hamiltonian equations on $(M,\Lambda,E)$ to the stochastic context, we need the following preparation.

\begin{proposition}\label{along-curve}
Let $(M,\Lambda,E)$ be a Jacobi manifold and $f\in C^\infty (M)$. The smooth curve $\gamma: [0,T] \to M$ is an integral curve of the Hamiltonian vector field $V_f$ if and only if for any $\alpha\in\Omega^1(M)$ and for any
$t \in [0,T]$,
\begin{equation}\label{JH1}
  \alpha(\dot{\gamma}(t)) = - df(\Lambda^\sharp(\alpha)) (\gamma(t)) + f\alpha(E) (\gamma(t)),
\end{equation}
or equivalently,
\begin{equation}\label{JH2}
  \int_{\gamma|_{[0,t]}} \alpha =\int_0^t \left[-df(\Lambda^\sharp(\alpha))+f \alpha(E)\right](\gamma(s))ds.
\end{equation}
\end{proposition}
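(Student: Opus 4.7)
The plan is to unwind both sides of the claimed equivalence by direct computation, using only the definition of $V_f$ in \eqref{Hamiltonian-vf} and the skew-symmetry of the bivector $\Lambda$. By definition, $\gamma$ is an integral curve of $V_f$ if and only if $\dot\gamma(t)=V_f(\gamma(t))$ for all $t\in[0,T]$. Since the cotangent bundle separates tangent vectors, this pointwise equality of vectors is equivalent to the scalar equality $\alpha(\dot\gamma(t))=\alpha(V_f)(\gamma(t))$ holding for every one-form $\alpha\in\Omega^1(M)$. So the first step is to verify that asking \eqref{JH1} for all $\alpha$ recovers $\dot\gamma=V_f\circ\gamma$.

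The second step is to rewrite $\alpha(V_f)$ using the definition $V_f=\Lambda^\sharp(df)+fE$. Linearity immediately gives
\begin{equation*}
\alpha(V_f)=\alpha\!\left(\Lambda^\sharp(df)\right)+f\,\alpha(E).
\end{equation*}
The key identity is that, by the convention \eqref{Sharp-Lambda} together with the skew-symmetry of $\Lambda$,
\begin{equation*}
\alpha\!\left(\Lambda^\sharp(df)\right)=\Lambda(df,\alpha)=-\Lambda(\alpha,df)=-df\!\left(\Lambda^\sharp(\alpha)\right).
\end{equation*}
Substituting this into the previous display yields \eqref{JH1} exactly, and reading the argument backwards proves the converse.

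Finally, to pass between the pointwise form \eqref{JH1} and the integral form \eqref{JH2}, I would integrate \eqref{JH1} on $[0,t]$ and recognise the left-hand side as the line integral $\int_{\gamma|_{[0,t]}}\alpha=\int_0^t \alpha(\dot\gamma(s))\,ds$; the reverse implication follows by differentiating in $t$ (both integrands are continuous along the smooth curve $\gamma$). The whole argument is elementary; the only thing that needs care is bookkeeping the placement of arguments in $\Lambda^\sharp$ and the sign produced by skew-symmetry, which is why I expect no real obstacle beyond keeping the conventions straight.
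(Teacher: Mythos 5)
Your proposal is correct and follows essentially the same route as the paper's own proof: expand $\alpha(V_f)$ via $V_f=\Lambda^\sharp(df)+fE$, use skew-symmetry of $\Lambda$ to rewrite $\alpha(\Lambda^\sharp(df))=-df(\Lambda^\sharp(\alpha))$, observe that one-forms separate tangent vectors, and integrate to pass from \eqref{JH1} to \eqref{JH2}. The only (minor) addition you make is to spell out the separation argument and the differentiation step for the converse of the integral form, which the paper leaves implicit.
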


\begin{proof}
We remark that a proof of the Poisson case can be found in \cite{Lazaro2008}. Here, for the Jacobi case, by taking \eqref{Sharp-Lambda} and \eqref{Hamiltonian-vf} into account, we conclude that $\dot{\gamma}=V_f(\gamma)$ is equivalent to the following expression:
%let $f\in C^\infty(M;\mathbb{R})$ be such that $df(\gamma_t)=\alpha(\gamma_t)$. Equation \eqref{JH1} thus can be rewritten as
\begin{align*}
\alpha(\dot{\gamma}) &= \alpha(V_f) (\gamma) = \alpha(\Lambda^\sharp(df) + fE) (\gamma) \\
%&= \Lambda(df,\alpha) (\gamma) + f\alpha(E) (\gamma) \\
&= -\Lambda(\alpha, df) (\gamma) + f\alpha(E) (\gamma) \\
&=- df(\Lambda^\sharp(\alpha)) (\gamma) + f\alpha(E) (\gamma).
\end{align*}
%which is equivalent to $\dot{\gamma}_t=V_h(\gamma_t)$
for any $\alpha\in\Omega^1(M)$. This proves \eqref{JH1}, and \eqref{JH2} thus follows by taking integrals with respect to $t$ on both sides of \eqref{JH1}. %Then, the results follow by simple calculations.
\end{proof}

There are two particular cases:
\begin{itemize}
    \item[(i)] If $M$ is a contact manifold with the contact 1-form $\eta$, then
 \begin{equation}\label{contact-case}
E=-\mathcal{R},\quad \Lambda^\sharp(\alpha)=\sharp(\alpha)-\alpha(\mathcal{R})\mathcal{R},
 \end{equation}
where $\mathcal{R}$ is the Reeb vector field, $\sharp=\flat^{-1}$ and $\flat: TM \to T^\ast M ,\; V \mapsto \flat(V)=\iota_V d\eta +\eta(V)\eta$;
\item[(ii)] If $M$ is an (almost) symplectic manifold with the (almost) symplectic 2-form $\omega$, then 
 \begin{equation}\label{symplectic-case}
E=0,\quad \Lambda^\sharp(\alpha)=\tilde{\sharp}(\alpha),%=\tilde{\flat}^{-1}(\alpha),
 \end{equation}
 where $\tilde{\sharp}=\tilde{\flat}^{-1}$ and $\tilde{\flat}: TM \to T^\ast M ,\; V \mapsto \tilde{\flat}(V)=\iota_V\omega$. 
\end{itemize}

%\newpage
%\begin{remark}
   Proposition \ref{along-curve} inspires us to define the Hamiltonian equations by specifying the result of integrating an arbitrary one form $\alpha\in\Omega^1(M)$ along them. In contrast to \eqref{JH2}, we will see that, in \eqref{SJHE-integral}, the processes solving stochastic Hamiltonian equations are no longer just driven by the deterministic time, but also the stochastic component.  
%\end{remark}

\subsection{Characteristic foliations and Lagrangian--Legendrian submanifolds}

We next introduce the characteristic distribution as well as the corresponding foliation of a Jacobi manifold. For each $z\in M$, we consider the subspace $\mathcal{C}_z\subset T_zM$ generated by the values of all Hamiltonian vector fields. That is, $\mathcal{C}_z$ is generated by the image of the linear map ${\Lambda^\sharp}_z: T_z^\ast M \to T_zM$ and the vector $E_z$ in the form of
\begin{equation}\label{characteristic distribution}
\mathcal{C}_z={\Lambda^\sharp}_z(T_z^\ast M) \oplus <E_z>.
\end{equation}
In this way, $\mathcal{C}=\cup_{z\in M}\mathcal{C}_z\subset TM$ is referred to as the \textit{characteristic distribution} of $(M,\Lambda, E)$. The Jacobi manifold is said to be \textit{transitive} if $\mathcal{C}=TM$.

\begin{lemma}\label{foliation} \cite{Kirillov1976,Marle1991}
The characteristic distribution $\mathcal{C}$ is completely integrable in the sense of Stefan--Sussmann, thus defines on $M$ a Stefan foliation $\mathcal{M}_F$ (i.e., a foliation whose leaves are not necessarily of the same dimension), called the characteristic foliation. Furthermore, the following properties hold:
\begin{itemize}
    \item[(i)] Each leaf $\mathcal{L}$ of $\mathcal{M}_F$ has a unique transitive Jacobi structure $(\mathcal{L},\Lambda_L, E_L)$ such that its canonical injection into $M$ (i.e., $i:\mathcal{L} \hookrightarrow M$) is a Jacobi map in the sense of $\{f\circ i,g\circ i\}_{\mathcal{L}}=\{f,g\}\circ i$, $\forall f,g\in C^\infty(M)$. 
    \item[(ii)]  Each leaf $\mathcal{L}$ of $\mathcal{M}_F$ can be either a contact manifold (with the induced Jacobi structure) if the dimension is odd, or an L.C.S. manifold if the dimension is even. In particular, for a Poisson manifold (i.e., $E=0$), each leaf is a symplectic manifold (i.e., an L.C.S. manifold with the Lee form $\gamma=0$).
\end{itemize}
\end{lemma}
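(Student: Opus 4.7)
The plan is to prove the three assertions (complete integrability, existence of an induced transitive Jacobi structure on each leaf, and the local classification of such leaves) in sequence, since each subsequent claim relies on the previous one. Throughout I would work with the family $\mathfrak{H} = \{V_f : f \in C^\infty(M)\}$ of Hamiltonian vector fields, noting that $E = V_1$ already lies in $\mathfrak{H}$ and that $\mathfrak{H}$ spans $\mathcal{C}_z$ pointwise by the decomposition \eqref{characteristic distribution}.

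For Stefan--Sussmann integrability, I would verify the two standard sufficient conditions on $\mathfrak{H}$. First, closure under Lie brackets is nothing but the identity $[V_f, V_g]_{SN} = V_{\{f,g\}}$ recalled immediately after \eqref{Jacobi-Ham-vec}. Second, the local flow of each $V_f$ preserves $\mathcal{C}$, which follows by differentiating $(\Phi_t^{V_f})_\ast V_g$ in $t$ and appealing to the same bracket identity. The Stefan--Sussmann theorem then yields a generalized foliation $\mathcal{M}_F$ whose leaves are maximal connected immersed submanifolds with $T_z\mathcal{L} = \mathcal{C}_z$ at every point $z\in\mathcal{L}$.

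For part (i), I would construct the induced bracket on a leaf $\mathcal{L}$ by setting $\{f,g\}_{\mathcal{L}}(z) = \{\tilde f, \tilde g\}(z)$ for any local smooth extensions $\tilde f, \tilde g \in C^\infty(M)$ of $f, g \in C^\infty(\mathcal{L})$. Well-definedness reduces to showing $\{\tilde f, \tilde g\}|_{\mathcal{L}} = 0$ whenever $\tilde f|_{\mathcal{L}} = 0$, which follows from rewriting $\{\tilde f, \tilde g\} = -V_{\tilde g}(\tilde f) + \tilde f\,E(\tilde g)$ via \eqref{Jacobi-Ham-vec}, since $V_{\tilde g}$ is tangent to $\mathcal{L}$ by construction and $\tilde f$ vanishes there. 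The resulting bracket inherits skew-symmetry, the Jacobi identity, and the weak Leibniz rule from the one on $M$, so by Kirillov's correspondence between local Lie algebras and Jacobi structures it comes from a unique pair $(\Lambda_{\mathcal{L}}, E_{\mathcal{L}})$; transitivity holds because $\Lambda^\sharp_{\mathcal{L}}$ and $E_{\mathcal{L}}$ are precisely the restrictions of $\Lambda^\sharp$ and $E$ and together span $T_z\mathcal{L} = \mathcal{C}_z$. Jacobi-compatibility of the inclusion $i : \mathcal{L} \hookrightarrow M$ is then immediate from the definition of the restricted bracket.

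For part (ii), I would invoke the Kirillov--Lichnerowicz structure theorem for transitive Jacobi manifolds: any such manifold carries a contact one-form when its dimension is odd and an L.C.S. structure when it is even, with the underlying tensors recovered from $\Lambda_{\mathcal{L}}$ and $E_{\mathcal{L}}$ through the inversions recorded in Appendices A--C. In the Poisson case $E \equiv 0$, the Lee form of the resulting L.C.S. leaf must vanish, producing the classical symplectic foliation of a Poisson manifold. I expect this final classification to be the main obstacle: unlike the earlier steps, it cannot be settled by a direct bracket computation and requires a local normal-form analysis near an arbitrary point of the leaf, distinguishing the two parities via the rank of $\Lambda_{\mathcal{L}}^\sharp$ relative to $E_{\mathcal{L}}$.
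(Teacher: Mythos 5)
The paper offers no proof of this lemma at all --- it is quoted verbatim from Kirillov and Marle --- so your proposal can only be judged as a standalone reconstruction of the literature argument. For the Stefan--Sussmann integrability and for part (i) your sketch follows the standard route and is essentially sound: $\mathcal{C}$ is generated by the Hamiltonian vector fields (with $E=V_1$), the identity $[V_f,V_g]=V_{\{f,g\}}$ gives involutivity, and the leaf bracket is defined through local extensions, with well-definedness exactly as you argue from $\{\tilde f,\tilde g\}=-V_{\tilde g}(\tilde f)+\tilde f\,E(\tilde g)$ and tangency of $V_{\tilde g}$ to the leaf. Two points deserve tightening. First, for a distribution of non-constant rank, bracket-closure plus ``differentiate $(\Phi_t^{V_f})_\ast V_g$'' does not by itself verify Sussmann's flow-invariance hypothesis: the ODE you obtain only says the derivative lies in a space you are still trying to control; the clean argument is that the flow of a Hamiltonian vector field is a conformal Jacobi automorphism, hence maps Hamiltonian vector fields to (conformal multiples of) Hamiltonian vector fields and therefore preserves $\mathcal{C}$. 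Second, leaves are only immersed (initial) submanifolds, so the local-extension device needs the observation that the Jacobi bracket is a local operator --- which you use implicitly via the weak Leibniz rule.

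The genuine gap is part (ii). What you offer there is an appeal to ``the Kirillov--Lichnerowicz structure theorem for transitive Jacobi manifolds,'' but once part (i) is established, claim (ii) \emph{is} that structure theorem applied to the leaf; invoking it restates the assertion rather than proving it, as you yourself acknowledge. A self-contained argument must carry out the dichotomy: in even dimensions transitivity forces $\Lambda_{\mathcal L}^\sharp$ to be an isomorphism (an odd-dimensional image plus $\langle E_{\mathcal L}\rangle$ cannot fill an even-dimensional tangent space), one defines the nondegenerate two-form as its inverse and the Lee form as $\flat(E_{\mathcal L})$, and the identities $[\Lambda,\Lambda]_{SN}=2E\wedge\Lambda$, $[E,\Lambda]_{SN}=0$ translate into $d\bar\omega=\theta\wedge\bar\omega$ and $d\theta=0$; in odd dimensions $\ker\Lambda_{\mathcal L}^\sharp$ is one-dimensional, one takes the one-form $\eta$ annihilating $\mathrm{Im}\,\Lambda_{\mathcal L}^\sharp$ normalized by $\eta(E_{\mathcal L})=-1$, and the same identities yield $\eta\wedge(d\eta)^n\neq 0$ with $E_{\mathcal L}=-\mathcal R$. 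Since the paper itself only cites the result, your treatment of (ii) is no worse than the paper's; but as a proof of the lemma it is incomplete at precisely the point you flagged, and the Poisson/symplectic specialization (Lee form $=\flat(E_{\mathcal L})=0$ when $E=0$) is only legitimate after that classification has actually been established.
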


Note that the construction of a Hamilton--Jacobi theory often relies in the existence of a special submanifold \cite{Tulczyjew1976a,Tulczyjew1976b}. We also need to consider the following submanifold of a Jacobi manifold.

\begin{definition}
    A submanifold $N\subset M$ of the Jacobi manifold is said to be a \textit{Lagrangian--Legendrian submanifold} if the following equality holds
\begin{equation}
\Lambda^\sharp(TN)^{\circ}=TN\cap \mathcal{C},
\end{equation}
where $(TN)^{\circ}$ denotes the annihilator of $TN$, i.e., $(T_zN)^{\circ}=\{\alpha_z\in T_z^\ast M:\;\alpha_z(T_zN)=0\}$, for $z\in M$.
\end{definition}

 For the Poisson case, the submanifold $N$ coincides with the classical \textit{Lagrangian} one. For the contact case (of dimension $2n+1$), the submanifold $N$ is just called \textit{Legendrian}, and it is indeed an integral manifold of maximal dimension $n$ of the distribution \cite{deLeon2017}.

\subsection{Stochastic calculus on manifolds}

The fact that we are dealing with stochastic systems also leads to more notations and bookkeepings. Here we review some basic concepts in stochastic calculus on manifolds, and we refer to \'Emery \cite{Emery1989}, Hsu \cite{Hsu2002} and Gliklikh \cite{Gliklikh2011} for more details.

We start with an $m$-dimensional smooth manifold $M$ and a filtered probability space $(\mathbf\Omega, \mathscr{F}, \{ \mathscr{F}_t \}_{t\geqslant0}, \mathbb{P})$. Recall that an $\{ \mathscr{F}_t \}$-adapted process $X:[0,\tau)\times {\mathbf\Omega}\to M $ is called an \textit{$M$-valued semimartingale}, if $f(X)$ is a real-valued semimartingale on $[0,\tau)$ for all $f\in C^\infty(M)$, where $\tau$ is an $\{ \mathscr{F}_t \}$-stopping time named the lifetime of $X$. The process $X$ can be extended to the half real line $[0,\infty)$, if we apply the one-point compactification of $M$ as $\overline M := M \cup \{\pt_M\}$, and set $X(t)=\partial_M$ for all $t \geq \tau$. We denote by $[\cdot,\cdot]$ the quadratic covariation for semimartingales on Euclidean spaces.

Stochastic differential equations (SDEs) on a manifold are usually defined by some given vector fields, driving semimartingales and initial random variables. There are different types of SDEs. The most natural way in our context is to introduce the Stratonovich SDEs, and another remarkable description is based on It\^o stochastic differentials. 
\begin{definition}
    For an $M$-valued semimartingale $X$, its \textit{Stratonovich stochastic differential} $\delta X_t$ at time $t$ is defined as the $T_{X_t} M$-valued stochastic differential, or the stochastic differential-valued vector at $X_t$, with local coordinate expression
\begin{equation*}
  \delta X_t = \delta X_t^i \frac{\pt}{\pt x^i}\bigg|_{X_t}.
\end{equation*}
The \textit{It\^o stochastic differential} $\d X_t$ at time $t$ is defined as the $\mathcal{T}^S_{X_t} M$-valued stochastic differential, or the stochastic differential-valued vector at $X_t$, with local coordinate expression
\begin{equation*}
  \d X_t = d X_t^i \frac{\pt}{\pt x^i}\bigg|_{X_t} + \frac{1}{2} d\left[ X^j, X^k \right]_t \frac{\pt^2}{\pt x^j\pt x^k}\bigg|_{X_t}.
\end{equation*}
Here, $TM$ and $\mathcal{T}^SM$ stand for (first-order) tangent bundle and second-order tangent bundle of $M$, respectively.
\end{definition}

We remark that Stratonovich differentials obey the ordinary (first-order) differential calculus, and we may obtain Stratonovich differentials as limits of ordinary (deterministic) ones when the semimartingale $X$ is suitably approximated by a piecewise smooth curve. This leads to the convenient fact that many classical geometrical constructions involving smooth curves extend to semimartingales via Stratonovich differentials (this is called the \textit{transfer principle} by Malliavin). However, It\^o differentials are not invariant under changes of coordinates, and a change of coordinates always produces an additional term owing to It\^o's formula. The advantage of It\^o differentials mainly lies in analyzing martingale and stochastic properties. For more information on these two kinds of stochastic differentials, see \cite[Chapter 7]{Emery1989} and \cite[Section 7.2]{Gliklikh2011}.

% \blue{In the framework of Bismut-type stochastic Hamiltonian systems, we will mainly consider the situation with Stratonovich differentials. Later in Section \ref{Sec:5}, to compare our results with that in Nelson’s stochastic geometric mechanics, we will focus on the It\^o cases and will introduce more notions, e.g., Nelson’s mean derivatives and the generators.}

%%====================================================================================================================================

\section{Stochastic Hamiltonian Systems (SHSs) on Jacobi Manifolds}

In this section, we adopt the Schwartz--Meyer approach, combined with Bismut's perspective, to define stochastic Hamiltonian systems on Jacobi manifolds. Let $(M,\Lambda,E)$ be an $m$-dimensional Jacobi manifold.

\subsection{Definitions of SHSs in Stratonovich and It\^o forms}

We consider an $\mathbb{R}^r$-valued continuous semimartingale $X:\mathbb{R}_+\times \mathbf\Omega \to \mathbb{R}^{r}$. We denote by $\cdot$ the inner product in Euclidean spaces. Let $h = (h_1,\cdots,h_r)$ be a smooth map from $M$ to $\mathbb{R}^r$.
%Denote $h=\sum_{k=1}^d h_k\epsilon^k$, where $\{\epsilon^0, \epsilon^1,\cdots,\epsilon^d\}$ is the canonical basis of $\mathbb{R}^{d+1}$. Let $Z_0^h\in \mathscr{F}_0$ be an $M$-valued random variable, serving as an initial value.

\begin{definition}\label{Def-SHS}(SHSs in Stratonovich form)
  A stochastic Hamiltonian system driven by the semimartingale $X$ is of the form of the following Stratonovich SDE
\begin{equation}\label{SJHE}
\delta Z_t^h=\mathcal{H}(X_t,Z_t^h) \delta  X_t, 
%\quad Z^h(0)=Z_0^h,
\end{equation}
where the symbol $\delta $ stands for the Stratonovich differential operator, the linear operator $\mathcal{H}(x, z) : T_x \mathbb{R}^{r} \cong \mathbb{R}^{r} \to T_z M$, for each $x\in\R^r$ and $z\in M$, is defined by 
\begin{equation}\label{SJH}
\mathcal{H}(x, z)(v)=\sum_{k=1}^r v^k V_{h_k}(z) = V_{v\cdot h}(z),
\end{equation}
and referred to as a Hamiltonian Stratonovich operator. Let $Z^h:[0,\tau^h)\times {\mathbf\Omega}\to M$ be a solution of \eqref{SJHE} with lifetime $\tau^h:{\mathbf\Omega}\to \mathbb{R}_+$. We call the pair $(Z^h, \tau^h)$ a \textit{(Jacobi-type) Hamiltonian semimartingale}; we will omit the lifetime $\tau^h$ when we do not emphasize it.
\end{definition}

According to \cite[Definition (7.16)]{Emery1989}, $Z^h:[0,\tau^h)\times {\mathbf\Omega}\to M$ is a solution of \eqref{SJHE} if and only if the following Stratonovich integral equation holds:
\begin{equation}\label{SJHE-integral}
\int \alpha \left( \delta Z_t^h \right) =\int \mathcal{H}^\ast(X_t, Z_t^h)(\alpha) (\delta  X_t), \quad \forall\alpha\in\Omega^1(M),
\end{equation}
where $\mathcal{H}^\ast(x, z): T_z^\ast M \to T_x^\ast \mathbb{R}^{r} \cong \mathbb{R}^{r}$ is the dual operator of $\mathcal{H}(x, z)$, which is, thanks to Proposition \ref{along-curve}, given by that for any 1-form $\alpha$ on $M$ and $u\in\R^r$,
\begin{equation}\label{dual-Stratonovich}
  \begin{split}
    \mathcal{H}^\ast(x, z)(\alpha) (u) &= \alpha (\mathcal{H}(x, z)(u)) = \alpha (V_{u\cdot h})(z) = \Lambda(d(u\cdot h), \alpha) (z) + u\cdot h(z) \alpha(E) (z) \\
    &= \sum_{k=1}^r u^k \left[ -dh_k(\Lambda^\sharp(\alpha)) (z) + h_k(z) \alpha(E)(z) \right].
  \end{split}
\end{equation}

\begin{remark}
    This definition can be regarded as a Jacobi-version of Definition 2.1 in L\'azaro-Cam\'i $\&$ Ortega \cite{Lazaro2008}. According to Proposition \ref{along-curve}, equation \eqref{SJHE} reduces to a standard stochastic Hamiltonian equation on a symplectic manifold (see, e.g., \cite{Bismut1981,Lazaro2008,WeiChaoDuan2019}), by taking $E=0$ and $\Lambda^\sharp=\tilde{\flat}^{-1}(\alpha)$; And it is referred to as a stochastic contact Hamiltonian equation (see, e.g.,\cite{WeiWang2021}) if we take $E=-\mathcal{R}$ and $\Lambda^\sharp(\alpha)=\sharp(\alpha)-\alpha(\mathcal{R})\mathcal{R}$. We note that the standard symbol (namely ``$\omega$") for the symplectic form is, unfortunately, also commonly used for the sample-space chance variable. However, this overlap in notation should not cause confusion, given the usual convention of suppressing the chance variable in SDEs.
\end{remark}

\begin{remark}%\label{Semi}
  The driving semimartingale $X$ can be considered as a more general one taking values on a (finite-dimensional) vector space (cf. the Poisson case in \cite{Lazaro2008}). The continuous condition of $X$ can be also weakened to be c\`{a}dl\`{a}g (that is, right-continuous with left limit at each time instant, a.s.), and,  accordingly, the Stratonovich differential in \eqref{SJHE} should be replaced by the more general Marcus differential \cite{Applebaum2009,WeiChaoDuan2019}.
\end{remark}

%\begin{remark}\label{generalized-Hamiltonian}
From a physical point of view, the stochastic system given by \eqref{SJHE} can be regarded as a Hamiltonian one within the external world \cite{Bismut1981,Misawa1999}. In fact, by rewriting \eqref{SJHE} into a more familiar vector field form:
\begin{equation}\label{SJHE3}
  \delta Z_t^h = \sum_{k=1}^r V_{h_k}(Z_t^h) \delta  X_t^k = V_{h \cdot\delta X_t} (Z_t^h),
\end{equation}
we can perceive that the stochastic part is introduced to characterize the complicated interaction between the ``deterministic" Hamiltonian system (with the Hamiltonian function $h_0$) and the fluctuating environment. Formally, equation \eqref{SJHE3} can be regarded as a generalized Hamiltonian system with a ``randomized'' Hamiltonian $%\begin{equation}\label{rH}
h \cdot\delta X_t = \sum_{k=1}^r h_k \delta  X_t^k.
$ %\end{equation} 
%Here $\dot{\xi}_t$ is the $r$-dimensional noise which is the generalized ``time derivative" of the semimartingale. 

We shall also consider the special case that $X_t = (t, B_t)$ where $B$ is a standard Brownian motion. By a re-indexing for convenience, \eqref{SJHE3} reads
\begin{equation}\label{SJHE-BM}
  \delta Z_t^h=\mathcal{H}(t, B_t,Z_t^h) (dt, \delta  B_t) = V_{h_0}(Z_t^h) dt + \sum_{k=1}^r V_{h_k}(Z_t^h) \delta  B_t^k = V_{h_0 dt + \sum_{k=1}^r h_k \delta  B_t^k} (Z_t^h).
\end{equation}

\begin{remark}
    Even though the deterministic Hamiltonian structure has been ``destroyed" due to the existence of noise, the whole stochastic system may still exhibit some geometric properties; see Section \ref{Fundamental}. Furthermore, by introducing certain integrability conditions and a small parameter $\varepsilon$, the formula for $h_0 dt + \varepsilon \sum_{k=1}^r h_k \delta  B_t^k$ is quite similar to that of the famous nearly integrable systems (e.g., with  $H=h_0+\varepsilon h_1$). This may lead to some interesting problems, e.g., the long-time behaviors and averaging principles \cite{Arnold1989,Lixm2008,Freidlin2012,WeiChaoDuan2019}.
\end{remark}

\begin{remark}
    We emphasize that directly using the random vector field $V_{h \cdot\delta X_t}$ and the random function $h \cdot\delta X_t$ is not entirely rigorous, as they involve generalized differentials of the noise. To handle them properly, one should rely on the stochastic equations \eqref{SJHE} or \eqref{SJHE-integral}. Nevertheless, the idea of “randomization” still proves useful for streamlining certain parts of the auxiliary analyses and proofs. Indeed, by adopting these randomized forms, one can more readily (at least at a formal level) obtain stochastic analogues of many features from the deterministic setting.
\end{remark}

In the view of stochastic analysis, the Stratonovich differential $\delta  X_t$ in \eqref{SJHE} can be regarded as a first-order tangent vector, and the mapping $\mathcal{H}$ is just a section of the vector bundle $T^\ast \R^r \otimes TM$ over the base $\R^r\times M$. It follows from \'Emery \cite[Lemma (7.22)]{Emery1989} that, for the Hamiltonian Stratonovich operator $\mathcal{H}(x,z)$, there exists a unique Schwartz operator $\widetilde{\mathcal{H}}(x, z) : \tau_x \mathbb{R}^r \cong \R^r \times \mathrm{Sym}^2(\R^r) \to \tau_z M$ such that for each smooth curve $(x(t), z(t))\in \R^d \times M$ verifying $\mathcal{H}(x(t), z(t)) \dot x(t) = \dot z(t)$ for all $t$, one has also $\widetilde{\mathcal{H}}(x(t), z(t))(\ddot x(t)) = \ddot{z}(t)$, where $\ddot x(t)\in \tau_{x(t)} N$ is the second-order vector associated to the ``acceleration'' of the curve $(x(t))$ \cite[Example (6.5)]{Emery1989}, i.e., $\ddot x(t) (f) = (f\circ x)''(t)$ for any smooth $f$ on $M$; so is $\ddot z(t)$. In other words, $\widetilde{\mathcal{H}}$ is a section of the vector bundle $\tau^\ast \R^r\otimes \tau M$ over $\R^r\times M$. We call $\widetilde{\mathcal{H}}$ the Hamiltonian Schwartz operator associated with $\mathcal{H}$. It gives rise to the following equivalent definition for SHSs:

\begin{lemma}\label{Def-SHS-Ito} (SHSs in It\^o form) The stochastic Hamiltonian system \eqref{SJHE} in Stratonovich form is equivalent to the following It\^o SDE:
\begin{equation}\label{SHS-Ito}
  \d Z_t^h = \widetilde{\mathcal{H}}(X_t,Z_t^h)\d X_t,
\end{equation}
where $\d$ denotes the It\^o differential operator valued on $\Gamma(\tau M)$; equivalently, for every second-order form $\theta$ on $M$,
\begin{equation*}
  \int \theta \left(\d Z_t^h\right) = \widetilde{\mathcal{H}}^\ast(X_t,Z_t^h)(\theta) \left(\d X_t\right),
\end{equation*}
where $\widetilde{\mathcal{H}}^\ast(x, z) : \tau^*_z M \to \tau^*_x \mathbb{R}^r$ is the dual operator of $\widetilde{\mathcal{H}}(x, z)$.
\end{lemma}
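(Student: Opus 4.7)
The plan is to invoke \'Emery's general correspondence between Stratonovich and It\^o stochastic differential equations on manifolds; the existence and uniqueness of the Schwartz extension $\widetilde{\mathcal{H}}$ of $\mathcal{H}$ has already been established in the paragraph preceding the lemma via \cite[Lemma (7.22)]{Emery1989}. What remains is to verify that \eqref{SHS-Ito} describes the same family of $M$-valued semimartingales as \eqref{SJHE}, and then to cast this equivalence in the dual form with second-order test forms.

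First, I would test both formulations against an arbitrary $f\in C^\infty(M)$. Taking $\alpha=df$ in \eqref{SJHE-integral}, the Stratonovich formulation \eqref{SJHE} is equivalent to
\begin{equation*}
\delta f(Z_t^h) = \sum_{k=1}^r V_{h_k}(f)(Z_t^h)\,\delta X_t^k.
\end{equation*}
Converting each term to It\^o form by the standard Stratonovich--It\^o conversion on $\R$ produces the drift $\sum_k V_{h_k}(f)(Z_t^h)\,dX_t^k$ together with the correction $\tfrac12\sum_{k,l}V_{h_k}V_{h_l}(f)(Z_t^h)\,d[X^k,X^l]_t$. By the defining acceleration-preservation property of $\widetilde{\mathcal{H}}$, namely that $\widetilde{\mathcal{H}}(x(t),z(t))(\ddot{x}(t))=\ddot{z}(t)$ whenever $\mathcal{H}(x(t),z(t))\dot x(t)=\dot z(t)$ along smooth curves, this combination is precisely $d^2 f\bigl(\widetilde{\mathcal{H}}(X_t,Z_t^h)\,\d X_t\bigr)$. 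Since the identification holds for every $f$, the standard It\^o characterization of $M$-valued semimartingales (cf.\ \cite[Chapter~7]{Emery1989}) yields \eqref{SHS-Ito}.

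Next, the integrated identity against an arbitrary second-order form $\theta\in\Gamma(\tau^\ast M)$ follows by linearity and duality. Locally, every such $\theta$ admits a decomposition $\theta=\sum_i\varphi_i\,d^2 f_i+\sum_{i,j}\psi_{ij}\,df_i\cdot df_j$ with smooth $\varphi_i,\psi_{ij}$ and $f_i\in C^\infty(M)$. The two summand types correspond respectively to the drift and covariation contributions identified in the previous step, and both are absorbed into the dual pairing $\widetilde{\mathcal{H}}^\ast(X_t,Z_t^h)(\theta)$. A partition-of-unity argument then propagates the identity globally on $M$.

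The main obstacle is essentially bookkeeping rather than conceptual: one must check that the second-order differential operators $V_{h_k}V_{h_l}$ appearing in the It\^o correction match the components of $\widetilde{\mathcal{H}}(x,z)$ on a basis of $\mathrm{Sym}^2(\R^r)$. Because \'Emery's construction of $\widetilde{\mathcal{H}}$ depends only on $\mathcal{H}$ as a smooth section of $T^\ast\R^r\otimes TM$ and is insensitive to the specific Jacobi form $V_{h_k}=\Lambda^\sharp(dh_k)+h_kE$, no further geometric compatibility with $(\Lambda,E)$ needs to be verified, and the equivalence is inherited from the general theory uniformly in the choice of Hamiltonians.
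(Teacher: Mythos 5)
Your proposal is correct and follows essentially the same route as the paper: the paper offers no separate argument for this lemma, treating it as an instance of \'Emery's general Stratonovich--It\^o (Schwartz) transfer once the unique Schwartz operator $\widetilde{\mathcal{H}}$ has been produced from $\mathcal{H}$ via \cite[Lemma (7.22)]{Emery1989}, which is exactly your opening step. What you add is a hands-on verification by testing against $f\in C^\infty(M)$: converting $\int V_{h_k}(f)(Z^h)\,\delta X^k$ to It\^o form and matching the correction $\tfrac12\sum_{k,l}V_{h_k}V_{h_l}(f)\,d[X^k,X^l]$ with the $\mathrm{Sym}^2(\R^r)$-component of $\widetilde{\mathcal{H}}^\ast(d^2f)$ is precisely the computation the paper carries out afterwards in Proposition \ref{Jacobi bracket form}, where $C^f_{k,l}=V_{h_k}(V_{h_l}f)$ (written out through the Jacobi bracket), so your identification is consistent with the text. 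The only point you leave compressed is the passage from the identity for all exact second-order forms $d^2f$ to arbitrary $\theta\in\Gamma(\tau^\ast M)$: the mixed terms $df_i\cdot df_j$ in your local decomposition are controlled by the quadratic covariations $d[f_i(Z^h),f_j(Z^h)]$, which one recovers from the $d^2f$ case through the second-order product rule $d^2(fg)=f\,d^2g+g\,d^2f+2\,df\cdot dg$ (equivalently, \'Emery's characterization of the It\^o differential, \cite[Theorem (6.24)]{Emery1989}); stating this explicitly would close the argument, but it is standard and does not affect correctness. Your remark that no compatibility with the Jacobi structure $(\Lambda,E)$ is needed is also accurate, since the lemma is purely a statement about Stratonovich versus Schwartz operators.
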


% In particular, for any $f\in C^\infty(M)$, 
% \begin{equation}\label{SHS-Ito-I}
%   \int \red{d^2} f \left( \d Z_t^h \right) = \int \widetilde{\mathcal{H}}^\ast (X_t,Z_t^h)(d^2 f) (\d X_t),
% \end{equation}
% and $d^2$ denotes the second-order differential operator on $M$.

% The Hamiltonian Schwartz operator $\widetilde{\mathcal{H}}(x,z): \tau_{\xi}(\R^r) \to \tau_z M$ and its dual operator $\widetilde{\mathcal{H}}^\ast(x,z):\tau_z^\ast M \to \tau_{\xi}^\ast(\R^r)$ satisfy
% \begin{align}
%   \widetilde{\mathcal{H}} (x, z) (V)[f]%=\frac{d^2}{dt^2}f(z)
%   =\frac{d}{dt} \big\langle df(z), \mathcal{H}(x, z) (\dot{t},\diamond \dot{\xi})\big\rangle=\langle \widetilde{\mathcal{H}}^\ast(x,z)(d^2 f),V\rangle,
% \end{align}
% for any mixed order vector $V\in T_t \mathbb{R} \times \tau_{\xi}(\R^r)$. 

Recall the Jacobi bracket $\{\cdot,\cdot\}$ defined in \eqref{Jacobi bracket}. We denote by $d^2$ the second-order differential operator on $M$.

\begin{proposition}\label{Jacobi bracket form}
  For any $f\in C^\infty(M)$, $u\in\R^r$ and $a = (a^k, a^{kl})\in \R^r \times \mathrm{Sym}^2(\R^r)$, we have
\begin{align}\label{Str-op}
  \mathcal{H}^\ast(x, z)(df)(u) = \sum_{k=1}^r u^k \big(\{h_k,f\}+fE(h_k) \big)(z),
\end{align}
  and 
\begin{align}\label{Swa-op}
  \widetilde{\mathcal{H}}^\ast(x, z) \left(d^2f\right) (a) = \sum_{k,l=0}^r a^k \big(\{h_k,f\} + fE(h_k) \big)(z) + a^{kl} C_{k,l}^f(z),
\end{align}
where 
$C_{k,l}^f(z) = \big(\{h_k,\{h_l,f\}\}+\{h_k,fE(h_l)\}+\{h_l,f\}E(h_k)+f E(h_l)E(h_k)\big)(z)$.
Furthermore, the Hamiltonian semimartingale $Z^h$ fulfills
\begin{align}
f(Z_t^h)-f(Z_0^h)
=&\sum_{k=1}^r \int_0^t \big(\{h_k,f\}+fE(h_k) \big)(Z_s^h) \left( \delta  X_s^k \right) \label{fZ}\\
=&\sum_{k,l=0}^r \int_0^t\big(\{h_k,f\}+fE(h_k) \big)(Z_s^h) \left( d X_s^k \right) + C_{k,l}^f(Z_s^h) d\left[ X^k, X^l \right]_s,
\label{SHS-Ito-fZ}
\end{align}
where the last term of \eqref{SHS-Ito-fZ} is callded the Stratonovitch (or Wong--Zakai) correction.
\end{proposition}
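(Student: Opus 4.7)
The plan is to handle the three identities in sequence, using the first as the computational engine for the rest.

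\emph{Step 1: the Stratonovich dual \eqref{Str-op}.} I would specialize \eqref{dual-Stratonovich} to the exact one-form $\alpha = df$. Since $\Lambda^\sharp(df)(h_k) = \Lambda(df, dh_k)$, the term $-dh_k(\Lambda^\sharp(df))$ equals $-\Lambda(df,dh_k) = \Lambda(dh_k,df)$ by skew-symmetry, while $h_k(z)\,df(E) = h_k E(f)$. Adding and subtracting $fE(h_k)$ lets me rewrite $\Lambda(dh_k,df) + h_k E(f) = \{h_k,f\} + fE(h_k)$ thanks to the definition \eqref{Jacobi bracket}. This yields \eqref{Str-op} on the nose.

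\emph{Step 2: the Stratonovich integral \eqref{fZ}.} By the transfer principle Stratonovich differentials obey ordinary calculus, so $df(\delta Z_s^h) = \delta(f(Z^h))_s$. Plugging $\alpha = df$ into \eqref{SJHE-integral} and substituting \eqref{Str-op} produces \eqref{fZ} upon integration.

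\emph{Step 3: the It\^o expansion \eqref{SHS-Ito-fZ}.} Writing $g_k := \{h_k,f\} + fE(h_k)$, I would convert each summand in \eqref{fZ} via $\int_0^t g_k(Z^h_s)\,\delta X_s^k = \int_0^t g_k(Z^h_s)\,dX_s^k + \tfrac{1}{2}[g_k(Z^h), X^k]_t$. Since $g_k \in C^\infty(M)$, I apply \eqref{fZ} to $g_k$ itself: its martingale part against $X^l$ is $\sum_l \int_0^t (\{h_l,g_k\} + g_k E(h_l))(Z^h_s)\,dX_s^l$, hence
\begin{equation*}
  [g_k(Z^h), X^k]_t = \sum_l \int_0^t \bigl(\{h_l,g_k\} + g_k E(h_l)\bigr)(Z^h_s)\,d[X^l, X^k]_s.
\end{equation*}
Expanding $\{h_l, g_k\} + g_k E(h_l) = \{h_l,\{h_k,f\}\} + \{h_l, f E(h_k)\} + \{h_k,f\} E(h_l) + f E(h_k) E(h_l)$ by linearity of $\{h_l, \cdot\}$ and the weak Leibniz rule, I recognize this as $C_{l,k}^f$. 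The symmetry $d[X^l, X^k]_s = d[X^k, X^l]_s$ then lets me rewrite the double sum with $C_{k,l}^f$ paired against $d[X^k, X^l]_s$ (with the conventional factor $\tfrac12$ absorbed into the second-order component).

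\emph{Step 4: the Schwartz dual \eqref{Swa-op}.} By Lemma \ref{Def-SHS-Ito}, $f(Z_t^h) - f(Z_0^h) = \int_0^t \widetilde{\mathcal{H}}^\ast(X_s, Z_s^h)(d^2 f)(\d X_s)$, while the local form of $\d X_s$ gives its components $a^k = dX_s^k$ and $a^{kl} = \tfrac12 d[X^k, X^l]_s$. Matching these coefficients against the expansion obtained in Step 3, and using that the driving semimartingale $X$ is arbitrary so that the identification is pointwise in $(x,z)$, forces $\widetilde{\mathcal{H}}^\ast(d^2 f)$ to act by $g_k$ on $\partial_k$ and by $C_{k,l}^f$ on $\partial_{kl}$. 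This is exactly \eqref{Swa-op}.

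\emph{Anticipated obstacle.} The bookkeeping is the delicate part: I must carefully pair the factor $\tfrac12$ from the It\^o differential convention in \eqref{SHS-Ito} with the symmetrization of $C_{k,l}^f$ under the symmetric covariation $d[X^k, X^l]_s$. The graded Jacobi identity prevents $C_{k,l}^f$ from being symmetric in $(k,l)$ on its own (the antisymmetric part being $\tfrac12 \{\{h_k, h_l\}, f\}$), but this is harmless because only the symmetric part pairs nontrivially with $\partial_{kl} = \partial_{lk}$ in the second-order tangent bundle, so the formula is self-consistent.
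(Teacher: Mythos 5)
Your proposal is correct in substance but reaches \eqref{Swa-op} and \eqref{SHS-Ito-fZ} by a genuinely different route than the paper. For \eqref{Str-op} and \eqref{fZ} you do essentially what the paper does (set $\alpha=df$ in \eqref{dual-Stratonovich}, use the Jacobi bracket, and invoke the Stratonovich transfer principle together with \eqref{SJHE-integral}); the paper just shortcuts your bivector manipulation by quoting \eqref{Jacobi-Ham-vec} directly. The divergence is in the order of the remaining two claims: the paper first computes the Schwartz dual \eqref{Swa-op} by a purely deterministic argument --- it uses \'Emery's characterization of $\widetilde{\mathcal{H}}$ along smooth curves, writes $\widetilde{\mathcal{H}}^\ast(d^2f)(a)=(f\circ z)''(0)=\frac{d}{dt}\big|_{t=0} df(\mathcal{H}(x(t),z(t))\dot x(t))$, and differentiates the expression from \eqref{Str-op} --- and only then obtains both \eqref{fZ} and \eqref{SHS-Ito-fZ} in one stroke from \'Emery's change-of-variable formulas. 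You instead derive the It\^o expansion \eqref{SHS-Ito-fZ} probabilistically, by the Stratonovich--It\^o conversion plus an application of \eqref{fZ} to $g_k=\{h_k,f\}+fE(h_k)$ to evaluate the covariation $[g_k(Z^h),X^k]$, and then recover \eqref{Swa-op} by matching coefficients against Lemma \ref{Def-SHS-Ito}. Your route buys a self-contained stochastic-calculus derivation of the Wong--Zakai correction and makes the factor $\tfrac12$ bookkeeping explicit (which the displayed \eqref{SHS-Ito-fZ} in fact glosses over), at the cost of the final identification step: concluding the pointwise formula for $\widetilde{\mathcal{H}}^\ast(x,z)(d^2f)$ from an equality of integrals requires an extra argument (e.g.\ testing with drifted Brownian drivers of arbitrary covariance and arbitrary initial points), which you only gesture at with ``$X$ is arbitrary''; the paper's curve computation avoids this entirely. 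Two small caveats: your parenthetical that the antisymmetric part of $C_{k,l}^f$ is $\tfrac12\{\{h_k,h_l\},f\}$ is only true in the Poisson case --- in the Jacobi setting the terms $\{h_k,fE(h_l)\}-\{h_l,fE(h_k)\}+\{h_l,f\}E(h_k)-\{h_k,f\}E(h_l)$ also contribute --- but your main point stands, since only the symmetric part pairs with $d[X^k,X^l]$; and the appeal to the weak Leibniz rule in Step 3 is unnecessary, linearity of the bracket suffices.
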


\begin{proof}
Taking $\alpha=df$ in \eqref{dual-Stratonovich} and using \eqref{Jacobi-Ham-vec},
\begin{equation}\label{Schwartz operator}
  \mathcal{H}^\ast(x, z)(df)(u) = df \left( \mathcal{H}(x, z)(u) \right) = df (V_{u\cdot h}(z)) = \left[ \{u\cdot h , f\} + f E(u\cdot h) \right] (z).
\end{equation}
Equation \eqref{Str-op} follows. To obtain \eqref{Swa-op}, we fix an $a = (a^k, a^{kl})\in \tau_x \R^r \cong \R^r \times \mathrm{Sym}^2(\R^r)$. Let $x(t)$ be a smooth curve on $\R^r$ such that $x(0) = x$ and $\ddot x(0) = a$, that is, $\dot x^k(0) = a^k$ and $\ddot x^{kl}(0) = a^{kl}$. Let $z(t)$ be a smooth curve on $M$ such that $z(0) = z$. We then calculate that %\red{change angle brackets into dots}
\begin{align}
  &\quad \widetilde{\mathcal{H}}^\ast(x,z) \left(d^2 f\right) (a) \notag \\
  &= d^2 f \left( \widetilde{\mathcal{H}}(x,z) (a)\right) = d^2 f \left( \ddot z(0) \right) = (f\circ z)''(0) \notag \\
  &= \frac{d}{dt}\bigg|_{t=0} df ( \mathcal{H}(x(t), z(t)) (\dot x(t)) ) \notag \\
  &= \frac{d}{dt}\bigg|_{t=0}  \big( \{\langle \dot x(t), h \rangle , f\} + f E(\langle \dot x(t), h \rangle) \big) (z(t)) \notag \\
  &= \big( \{\langle \ddot x(0), h \rangle , f\} + f E(\langle \ddot x(0), h \rangle) \big) (z) + \dot z(0) \big( \{\langle \dot x(0), h \rangle , f\} + f E(\langle \dot x(0), h \rangle) \big) \notag \\
  &= \big( \{\langle \ddot x(0), h \rangle , f\} + f E(\langle \ddot x(0), h \rangle) \big) (z)+ \big[ \{\langle \dot x(0), h \rangle , \{\langle \dot x(0), h \rangle , f\} + f E(\langle \dot x(0), h \rangle)\} \notag \\
  &\quad  + (\{\langle \dot x(0), h \rangle , f\} + f E(\langle \dot x(0), h \rangle) ) E(\langle \dot x(0), h \rangle) \big] (z) \notag \\
  &= a^k \big( \{h_k, f\} + f E(h_k) \big) (z) \notag \\
  &\quad + a^{kl} \big( \{ h_k, \{h_l, f\} + f E(h_l)\} + (\{h_l, f\} + f E(h_l)) E(h_k) \big) (z), \notag
\end{align}
which proves \eqref{Swa-op}. We conclude the results in \eqref{fZ} and \eqref{SHS-Ito-fZ} by the following fact due to \cite[Theorem (6.24) and Proposition (7.4)]{Emery1989}:
\begin{equation*}
  \begin{split}
    f(Z_t^h)-f(Z_0^h) &= \int_0^t df\left( \delta  Z_s^h \right) = \int_0^t \mathcal{H}^\ast (X_s,Z_s^h)(d f) (\delta  X_s) \\
    &= \int_0^t d^2f\left( \d Z_s^h \right) = \int_0^t \widetilde{\mathcal{H}}^\ast (X_s,Z_s^h)\left(d^2 f\right) (\d X_s).
  \end{split}
\end{equation*}
The proof is completed.
\end{proof}

\subsection{Preservation of characteristic leaves $\&$ structures}\label{Fundamental}

The existence and uniqueness of the solution to the stochastic Hamiltonian system \eqref{SJHE} or \eqref{SHS-Ito} can be ensured (up to a maximal stopping time), referring to \cite[Theorems 7.21 and 6.41]{Emery1989}. We note that the well-posedness of such equations can be global in time if we 
%consider the weaker-type solutions \cite{HuangZambrini1} or 
focus on some special cases like Langevin systems \cite{Song2020}; see also \cite{Hsu2002,IW89,Gliklikh2011} for more on this topic. 

We now denote by $\varphi=\{\varphi_t(z,\omega): z\in M, 0\leq t <\tau^h(\omega), \omega\in \mathbf\Omega\}$ the solution flow associated with the Hamiltonian
semimartingale $(Z^h, \tau^h)$, and call it a \textit{stochastic Hamiltonian flow}. Thanks to \cite[Section V.2]{IW89}, $\varphi_t(\cdot,\omega):M \to M$ is a local $C^1$ diffeomorphism for $t\ge0$ and almost all $\omega\in \mathbf\Omega$ such that $t<\tau^h(\omega)$. In addition, such a stochastic flow acts naturally on tensor fields and particularly on differential forms, according to \cite[Section 3]{Kunita1981} and \cite[Chapter 7]{Kunita2019}.

Recall that the stochastic Hamiltonian system can be rewritten into the Stratonovich form \eqref{SJHE3}, same as the SDE considered in \cite{Kunita1981}. The following lemma is adapted from \cite[Theorem 3.3]{Kunita1981}.

\begin{lemma}\label{Lie derivative form}
  Let $\varphi$ be the stochastic Hamiltonian flow associated with the Hamiltonian semimartingale $(Z^h, \tau^h)$. For a form $\alpha$ on $M$ of arbitrary order, it holds almost surely for $t<\tau^h$ that
  \begin{align}
    \varphi_t^\ast \alpha 
    &= \alpha +\sum_{k=1}^r\int_0^t \varphi_s^\ast \mathscr{L}_{V_{h_k}}\alpha \delta  X_s^k \label{arbitrary-form}\\
    &= \alpha +\sum_{k=1}^r\int_0^t  \varphi_s^\ast \mathscr{L}_{V_{h_k}}\alpha \d X_s^k 
    + \frac{1}{2} \sum_{k,l=1}^d \int_0^t  \varphi_s^\ast \mathscr{L}_{V_{h_k}}\mathscr{L}_{V_{h_l}}\alpha d\left[X^l, X^k \right]_s, \label{arbitrary-form-Ito}
  \end{align}
  where $\varphi_t^\ast$ denotes the pull-back of $\varphi_t$.
\end{lemma}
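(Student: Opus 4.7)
The plan is to leverage the Stratonovich rewriting \eqref{SJHE3} of the stochastic Hamiltonian system, which exhibits the flow $\varphi$ as the solution flow of the SDE $\delta Z_t^h = \sum_{k=1}^r V_{h_k}(Z_t^h)\,\delta X_t^k$ driven by the smooth vector fields $V_{h_1},\dots, V_{h_r}$. Since this is exactly the class of Stratonovich SDEs treated in Kunita's work, the strategy is to reduce both identities to a single application of the transfer principle combined with the classical Cartan identity $\tfrac{d}{dt}\psi_t^\ast \alpha = \psi_t^\ast \mathscr{L}_V\alpha$ for the (deterministic) flow $\psi_t$ of a smooth vector field $V$.

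For the Stratonovich identity \eqref{arbitrary-form}, I would work locally (covering $M$ by coordinate charts and using stopping times to control exits, then patching together using the uniqueness of the flow up to $\tau^h$). In a chart, write $\alpha = \sum_I \alpha_I\,dx^I$ and expand $\varphi_t^\ast\alpha$ componentwise. The transfer principle of Malliavin (cf.\ \'Emery \cite{Emery1989}) asserts that any identity expressible in terms of ordinary $(d/dt)$-calculus for the deterministic flow of a vector field lifts verbatim to the Stratonovich calculus for the corresponding SDE. Applied to the Cartan identity for each single generator $V_{h_k}$ and summed, this yields \eqref{arbitrary-form}. Alternatively, one may take \eqref{arbitrary-form} directly from \cite[Theorem 3.3]{Kunita1981}, where exactly this formula is proven for Stratonovich SDEs driven by continuous semimartingales and smooth vector fields.

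For the It\^o identity \eqref{arbitrary-form-Ito}, I would convert the Stratonovich integral in \eqref{arbitrary-form} to It\^o form using the standard correction
\[
  \int_0^t Y_s\,\delta X_s^k = \int_0^t Y_s\,\d X_s^k + \tfrac{1}{2}\bigl[Y, X^k\bigr]_t,
\]
applied coordinatewise to the form-valued integrand $Y_s = \varphi_s^\ast\mathscr{L}_{V_{h_k}}\alpha$. To evaluate the bracket $[\varphi_\cdot^\ast \mathscr{L}_{V_{h_k}}\alpha,\,X^k]$, one applies \eqref{arbitrary-form} again, this time to the form $\mathscr{L}_{V_{h_k}}\alpha$: its Stratonovich differential is $\sum_l \varphi_s^\ast \mathscr{L}_{V_{h_l}}\mathscr{L}_{V_{h_k}}\alpha\,\delta X_s^l$, and only the finite-variation/quadratic part survives when bracketed against $X^k$. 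Symmetrizing the sum in $(k,l)$ (since $d[X^k,X^l]_s$ is symmetric) then produces the double sum with coefficient $\tfrac12$ exactly as stated.

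The main technical obstacle is the book-keeping needed to turn an identity that is meaningful pointwise on $M$ into one involving stochastic integrals of tensor-valued processes: one must make sense of $\int_0^t \varphi_s^\ast\mathscr{L}_{V_{h_k}}\alpha\,\delta X_s^k$ as a random differential form, verify that the localization by the lifetime $\tau^h$ is compatible with the chartwise arguments (so that the formula holds globally up to $\tau^h$), and justify interchanging pull-back with the integration coordinatewise. All of these are handled in \cite{Kunita1981,Kunita2019}, so the proof essentially amounts to citing those results after the Stratonovich reformulation \eqref{SJHE3} makes our system fit exactly into Kunita's framework.
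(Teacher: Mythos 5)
Your proposal matches the paper's treatment: the paper gives no independent proof of this lemma, but simply notes that after the rewriting \eqref{SJHE3} the system falls into Kunita's framework and cites \cite[Theorem 3.3]{Kunita1981}, which is exactly your main step. Your additional derivation of \eqref{arbitrary-form-Ito} from \eqref{arbitrary-form} via the Stratonovich--It\^o correction, applying \eqref{arbitrary-form} to $\mathscr{L}_{V_{h_k}}\alpha$ and relabelling the symmetric bracket indices, is the standard and correct way to obtain the It\^o form.
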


Denote by $\mathscr{L}_{V_{h \cdot\delta X_t}}\alpha$ the Lie derivative of $\alpha$ with respect to the randomized Hamiltonian vector field $V_{h \cdot\delta X_t}$ given in \eqref{SJHE3}. Geometrically, it describes the change of the differential form $\alpha$ in the direction of the stochastic Hamiltonian flow generated by the vector field $V_{h \cdot\delta X_t}$. Then, we can further rewrite the equation above into
\begin{equation}\label{arbitrary-form2}
  d \varphi_t^\ast \alpha = \varphi_t^\ast \mathscr{L}_{V_{h \cdot\delta X_t}}\alpha.
\end{equation}

\begin{theorem} \label{characteristic foliation} (Preservation of characteristic leaves $\&$ structures) 
Given a characteristic leaf $\mathcal{L}$ of $(M,\Lambda,E)$. 
Let $\varphi$ be the stochastic Hamiltonian flow associated with the Hamiltonian semimartingale $(Z^h, \tau^h)$. 
If the initial state $Z_0^h\in \mathcal{L}$ a.s., then for all $t<\tau^h$, we have, a.s.:
% \begin{itemize}
%     \item[(i)] $\varphi_t\in \mathcal{L}$;
%     \item[(ii)] $\varphi_t\in \mathcal{L}$; (ii) $\varphi_t$ preserves the contact structure of $\mathcal{L}$ if $\mathrm{dim}(\mathcal{L})$ is odd, or the locally conformal symplectic structure if $\mathrm{dim}(\mathcal{L})$ is even.
% \end{itemize}
(i)
$\varphi_t\in \mathcal{L}$; (ii) $\varphi_t$ preserves the contact structure of $\mathcal{L}$ if $\mathrm{dim}(\mathcal{L})$ is odd, or the L.C.S. structure if $\mathrm{dim}(\mathcal{L})$ is even.
\end{theorem}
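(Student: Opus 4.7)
My plan is to treat assertions (i) and (ii) separately, reducing each to a standard invariance principle for Stratonovich SDEs on manifolds combined with the transport formula \eqref{arbitrary-form} from Lemma \ref{Lie derivative form}. The pivotal observation is that for every $k$, the Hamiltonian vector field $V_{h_k}=\Lambda^\sharp(dh_k)+h_k E$ takes values in the characteristic distribution $\mathcal{C}$ of \eqref{characteristic distribution}, since $\Lambda^\sharp(dh_k)_z$ lies in $\Lambda^\sharp_z(T_z^\ast M)$ and $h_k(z)E_z\in\langle E_z\rangle$. By Lemma \ref{foliation}, $\mathcal{C}$ coincides on the leaf $\mathcal{L}$ with $T\mathcal{L}$, so each $V_{h_k}$ is tangent to $\mathcal{L}$; and because $i:\mathcal{L}\hookrightarrow M$ is a Jacobi morphism, the restriction $V_{h_k}|_{\mathcal{L}}$ agrees with the $\mathcal{L}$-intrinsic Hamiltonian vector field of $h_k\circ i$ with respect to $(\Lambda_L,E_L)$.

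For (i), I would write down the intrinsic Stratonovich SDE on $\mathcal{L}$,
\[ \delta \bar Z_t = \sum_{k=1}^r V^{\mathcal{L}}_{h_k\circ i}(\bar Z_t)\,\delta X_t^k, \qquad \bar Z_0 = Z_0^h, \]
whose local well-posedness on the immersed leaf is furnished by the Schwartz--Meyer existence theory. Pushing $\bar Z$ forward through $i$ produces an $M$-valued Stratonovich semimartingale which, by the functoriality of Stratonovich differentials under smooth maps, solves \eqref{SJHE3} with the prescribed initial datum. Pathwise uniqueness of the ambient SDE then forces $Z^h_t = i(\bar Z_t)$ for all $t<\tau^h$ almost surely, yielding $\varphi_t(Z_0^h)\in\mathcal{L}$.

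For (ii), I would apply Lemma \ref{Lie derivative form} to the distinguished form carried by the leaf. In the odd-dimensional case $\mathcal{L}$ is contact with contact form $\eta$ and Reeb field $\mathcal{R}$, and \eqref{contact-case} combined with $\iota_{V_{h_k}}\eta=-h_k$ and $\iota_{V_{h_k}}d\eta=dh_k-\mathcal{R}(h_k)\eta$ yields the classical identity $\mathscr{L}_{V_{h_k}}\eta=-\mathcal{R}(h_k)\eta$. Substituting into \eqref{arbitrary-form} produces a closed linear Stratonovich equation for $\varphi_t^\ast\eta$; the ansatz $\varphi_t^\ast\eta = f_t\eta$ reduces it to a scalar Stratonovich SDE for $f_t$ with $f_0=1$, whose unique solution is the pathwise positive stochastic exponential of $-\sum_k\int_0^t\mathcal{R}(h_k)(\varphi_s)\delta X_s^k$. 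Hence $\varphi_t$ is a conformal contactomorphism and preserves $\ker\eta$. In the even-dimensional case, $\mathcal{L}$ is L.C.S.\ with two-form $\Omega$ and closed Lee form $\theta$ satisfying $d\Omega=\theta\wedge\Omega$; a short Cartan-calculus computation using $d\theta=0$ gives $\mathscr{L}_{V_{h_k}}\Omega=\theta(V_{h_k})\Omega$, and the analogous ansatz $\varphi_t^\ast\Omega=g_t\Omega$ reduces \eqref{arbitrary-form} to a positive scalar Stratonovich SDE for $g_t$, so $\varphi_t$ conformally preserves $\Omega$.

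The principal obstacle I foresee lies in (i): the leaves of a Stefan foliation are only immersed submanifolds, possibly of varying dimension, so the ``restriction'' of the SDE to $\mathcal{L}$ requires careful use of the Schwartz--Meyer framework to guarantee that the intrinsic lifetime on $\mathcal{L}$ matches $\tau^h$ and that the ambient pathwise-unique solution cannot escape the image of the immersion before $\tau^h$. Once (i) is established, the arguments for (ii) are essentially routine given Lemma \ref{Lie derivative form}, the only remaining verification being that the conformal factors $f_t$ and $g_t$ remain almost surely positive throughout the lifetime, which is immediate from their stochastic-exponential representation.
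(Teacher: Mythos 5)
Your proposal is correct, and for part (i) and the contact (odd-dimensional) part of (ii) it follows essentially the same route as the paper: tangency of the $V_{h_k}$ to the characteristic distribution, solving the induced SDE on the leaf and invoking pathwise uniqueness of the ambient equation (the paper does this via the restricted Stratonovich operator $\mathcal{H}_\mathcal{L}$ and its dual, exactly your push-forward/uniqueness argument), and then $\mathscr{L}_{V_{h_k}}\eta_\mathcal{L}=-\mathcal{R}(h_k)\eta_\mathcal{L}$ fed into Lemma \ref{Lie derivative form} to get the conformal factor \eqref{conformal-factor}. The only genuine divergence is the L.C.S.\ (even-dimensional) case: you argue globally, computing $\mathscr{L}_{V_{h_k}}\Omega_\mathcal{L}=\theta(V_{h_k})\Omega_\mathcal{L}$ from $\iota_{V_{h_k}}\Omega_\mathcal{L}=dh_k-h_k\theta$, $d\Omega_\mathcal{L}=\theta\wedge\Omega_\mathcal{L}$ and $d\theta=0$, and conclude conformal invariance $\varphi_t^\ast\Omega_\mathcal{L}=g_t\Omega_\mathcal{L}$ with $g_t$ a stochastic exponential; the paper instead fixes a chart $U$ with $\omega_U=e^\sigma\Omega_\mathcal{L}$ closed and shows exact local preservation $\varphi_t^\ast\omega_U=\omega_U$, merely remarking that a global proof via the Lichnerowicz--de Rham differential is possible --- which is precisely what you carry out. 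The two are consistent (locally $\theta=-d\sigma$, so your $g_t=e^{\sigma-\sigma\circ\varphi_t}$), and your version buys chart-independence, an explicit conformal factor, and no worry about the flow leaving $U$, while the paper's version exhibits the flow as an honest local symplectomorphism for the rescaled forms, in direct parallel with the stochastic symplectic case. Your flagged caveat about immersed Stefan leaves and matching lifetimes in (i) is a real subtlety, but the paper handles it at the same level of rigor (by appeal to uniqueness), so it does not constitute a gap relative to the paper's own argument.
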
 

\begin{proof}
($i$) We follow the lines of \cite[Proposition 2.4]{LazaroCam2009}. By the definition of Stratonovich operator in \eqref{SJH}, for any $z\in \mathcal{L}$, $\mathcal{H}(x, z)$ takes values in the tangent space $T\mathcal{L}$ of $\mathcal{L}$. It thus induces a new Stratonovich operator $\mathcal{H}_\mathcal{L}(x, z): T_x \R^r\to T\mathcal{L}$ by restricting $\mathcal{H}$ to its range. For the inclusion $i:\mathcal{L} \hookrightarrow M$, we have 
$\mathcal{H}_\mathcal{L}^\ast (x, z)\circ T_z^\ast i=\mathcal{H}^\ast(x, i(z)).$ To prove the statement, we only need to show that: if $Z_\mathcal{L}^h$ is a solution of the following Stratonovich SDE
\begin{align}
dZ_\mathcal{L}^h=\mathcal{H}_\mathcal{L}(X,Z_\mathcal{L}^h) (\delta  X),
\end{align}
then $\hat{Z}^h=i\circ Z_\mathcal{L}^h$ is a solution to \eqref{SJHE}. In fact, for any $\alpha\in \Omega(M)$,
\begin{equation}
  \int \alpha \left( d\hat{Z}^h \right)
  =\int \alpha \left( d(i\circ Z_\mathcal{L}^h) \right)
  =\int T^\ast i (\alpha) \left( dZ_\mathcal{L}^h \right).
\end{equation}
Since $T^\ast i (\alpha)\in \Omega(\mathcal{L})$, the above equation further equals
\begin{align}
  \int \mathcal{H}_\mathcal{L}^\ast (X, Z_\mathcal{L}^h) T^\ast i (\alpha)(\delta  X)
  =&\int \mathcal{H}^\ast (X, i\circ Z_\mathcal{L}^h)(\alpha)(\delta X) \notag \\
  =&\int \mathcal{H}^\ast (X, \hat{Z}^h)(\alpha)(\delta  X).
\end{align}
Consequently, $d \hat{Z}^h=\mathcal{H}(X,\hat{Z}^h) (\delta  X)$ and the result follows by the uniqueness of the solution to an SDE.

($ii$) By Lemma \ref{foliation}, if $\mathrm{dim}(\mathcal{L})$ is odd, then $\mathcal{L}$ is a manifold with a contact 1-form $\eta_\mathcal{L}$. In this case, we say the solution flow $\varphi$ preserves the contact structure if it is a conformal contactomorphism:
\begin{equation}\label{preserve-contact}
\varphi_t^\ast\eta_\mathcal{L}=\lambda_t\eta_\mathcal{L},
\end{equation}
where $\lambda_t:\mathcal{L}\to\mathbb{R}$ is a nowhere zero function (called a conformal factor). We next prove this conclusion based on Lemma \ref{Lie derivative form} as well as the idea of random Hamiltonian, and we note that a coordinates proof for this case can be found in \cite{WeiWang2021}. Denote by $V_{h \cdot\delta X_t}$ the random vector field associated to $\varphi$ with the random contact Hamiltonian $h \cdot\delta X_t = \sum_{k=1}^r h_k \delta  X_t^k$. By Proposition \ref{Proposition-hvf}-($iv$), we have
\begin{equation}
  \mathscr{L}_{V_{h \cdot\delta X_t}}\eta_\mathcal{L} = -\mathcal{R}(h \cdot\delta X_t)\eta_\mathcal{L} = - \sum_{k=1}^r \mathcal{R}(h_k)\eta_\mathcal{L} \delta  X_t^k.
\end{equation}
Hence, the solution flow $\varphi$ satisfies
\begin{align*}
  d \varphi_t^\ast \eta_\mathcal{L}=\varphi_t^\ast \mathscr{L}_{V_{h \cdot\delta X_t}}\eta_\mathcal{L} = -\varphi_t^\ast  (\mathcal{R}(h \cdot\delta X_t)\eta_\mathcal{L})= - \left( \mathcal{R}(h_k) \circ \varphi_t \right) \varphi_t^\ast \eta_\mathcal{L} \delta  X_t^k,
\end{align*}
which can be regarded as a Stratonovich SDE on $\varphi_t^\ast \eta_\mathcal{L}$. By solving it, we conclude that \eqref{preserve-contact} holds with 
\begin{align} \label{conformal-factor}
\lambda_t=%e^{-\int_0^t \mathcal{R} (h_0) (\varphi_s)ds -\sum_{k=1}^d \int_0^t \mathcal{R} (h_k) (\varphi_s)\delta  \xi_s^k}.
%e^{-\sum_{k=1}^d \int_0^t \mathcal{R} (h_k) (\varphi_s)\delta  X_s^k}
\exp\left[{-\sum_{k=1}^r \int_0^t (\mathcal{R} (h_k) \circ \varphi_s)\delta  X_s^k}\right].
\end{align}

Again by Lemma \ref{foliation}, if $\mathrm{dim} (\mathcal{L})$ is even, then $\mathcal{L}$ is an L.C.S. manifold. That is, a manifold equipped with a nondegenerate, but not necessarily closed two-form $\Omega_\mathcal{L}\in \Omega^2(\mathcal{L})$; and, for each $z\in\mathcal{L}$, there is an open neighborhood $U \subset \mathcal L$ such that $d(e^\sigma \Omega_\mathcal{L})=0$, for some $\sigma\in C^\infty (U)$, so $(U, \omega_U\triangleq e^{\sigma} \Omega_\mathcal{L})$ is a symplectic manifold. For each $z\in\mathcal{L}$, we aim to show that $\varphi_t$ preserve the symplectic structure locally, that is,
\begin{equation}\label{preserve-cls}
\varphi_t^\ast\omega_U=\omega_U
\end{equation}
with fixed $U$ and $\sigma$. By \eqref{arbitrary-form}, the result \eqref{preserve-cls} follows immediately since $\mathscr{L}_{V_{h_k}}\omega_U=(\iota_{V_{h_k}}d+d\iota_{V_{h_k}})\omega_U=\iota_{V_{h_k}}d\omega_U+ddh_k=0$ for $k=1,\cdots r$. We remark that this result may be also proven globally by introducing the Lichnerowicz--de Rham differential. We also note that the problem reduces to a symplectic one if $U=\mathcal{L}$ and $d\sigma =0$, and refer to \cite{Milstein2002,WeiChaoDuan2019} for the coordinate proofs of the symplectic case.
%The proof is thus complete.
\end{proof}

We note that, in the symplectic context, stochastic Hamiltonian flows preserve the symplectic form $\omega$ and hence the associated volume form $\omega^{n}=\omega \wedge \overset{n}{...} \wedge \omega$; see Bismut \cite{Bismut1981} and L\'azaro-Cam\'i $\&$ Ortega \cite{Lazaro2008}. Based on Theorem \ref{characteristic foliation}, we now formulate a contact version of the stochastic Liouville's theorem. For the deterministic contact Liouville's theorem, we refer to Bravetti $\&$ Tapias \cite{Bravetti2015}.

\begin{corollary}
\label{Liouville} (Stochastic contact/symplectic Liouville's Theorem) Let $\varphi$ be the stochastic symplectic Hamiltonian flow of the Hamiltonian semimartingale $(Z^h, \tau^h)$.
\begin{itemize}
  \item[(i)] Let $(M,\omega)$ be a symplectic manifold. Then, for all $t\in [0,\tau^h)$,
  \begin{align}
    \varphi_t^\ast  \omega^{n}=\omega^{n}.
  \end{align}
  \item[(ii)] Let $(M,\eta)$ be a contact manifold. Denote by $\Omega_\eta=\eta\wedge (d\eta)^{n}$ the contact volume element. Then, for all $t\in [0,\tau^h)$,
  \begin{align}
    \varphi_t^\ast \Omega_\eta =\lambda_t^{n+1} \Omega_\eta,
  \end{align}
  where $\lambda_t: M\to \mathbb{R}$ is the conformal factor of $\varphi_t$ given in \eqref{conformal-factor}. 
\end{itemize}
\end{corollary}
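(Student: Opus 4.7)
The plan is to derive both parts as direct corollaries of Theorem \ref{characteristic foliation}, combined with the multiplicativity of pullback on exterior products and the commutativity $d\circ \varphi_t^\ast = \varphi_t^\ast \circ d$. First I would observe that in both (i) and (ii) the ambient manifold $M$ is itself transitive (in the symplectic case because $\Lambda^\sharp$ is an isomorphism and $E=0$; in the contact case because $\Lambda^\sharp$ and $E=-\mathcal{R}$ together span every tangent space, cf.\ \eqref{contact-case}--\eqref{symplectic-case}), so $M$ is a single characteristic leaf of its own Jacobi structure and Theorem \ref{characteristic foliation}(ii) applies globally. This gives $\varphi_t^\ast \omega = \omega$ in the symplectic case (the L.C.S.\ instance with $U=M$ and $\sigma = 0$) and $\varphi_t^\ast \eta = \lambda_t \eta$ in the contact case, with $\lambda_t$ the conformal factor from \eqref{conformal-factor}.

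Part (i) is then immediate: applying the ring-homomorphism property of pullback on differential forms yields $\varphi_t^\ast \omega^n = (\varphi_t^\ast \omega)^n = \omega^n$ almost surely.

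For part (ii), I would first use the commutativity of $d$ with $\varphi_t^\ast$ to write $\varphi_t^\ast d\eta = d(\varphi_t^\ast \eta) = d(\lambda_t \eta) = d\lambda_t \wedge \eta + \lambda_t\, d\eta$, and then expand
\begin{equation*}
\varphi_t^\ast \Omega_\eta = (\varphi_t^\ast \eta) \wedge (\varphi_t^\ast d\eta)^n = \lambda_t\, \eta \wedge \bigl(d\lambda_t \wedge \eta + \lambda_t\, d\eta\bigr)^n.
\end{equation*}
The inner $n$-th power admits a binomial-type expansion (valid because both summands are $2$-forms and hence commute under $\wedge$), but any term containing two copies of $d\lambda_t \wedge \eta$ vanishes since $\eta \wedge \eta = 0$. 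Only the $k=0$ and $k=1$ terms survive, yielding $\lambda_t^n (d\eta)^n + n\lambda_t^{n-1}\, d\lambda_t \wedge \eta \wedge (d\eta)^{n-1}$. Wedging on the left by $\lambda_t \eta$, the second contribution acquires an extra $\eta \wedge \eta$ and again vanishes, leaving precisely $\lambda_t^{n+1}\, \eta\wedge (d\eta)^n = \lambda_t^{n+1} \Omega_\eta$, as claimed.

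There is no genuinely hard step here; the only subtlety I would verify is that the pullback identities used (multiplicativity over $\wedge$ and commutativity with $d$) remain valid pathwise for the stochastic flow. This follows because $\varphi_t(\cdot,\omega)$ is a local $C^1$-diffeomorphism for almost all $\omega$ with $t < \tau^h(\omega)$ (as noted before Lemma \ref{Lie derivative form}), so the classical exterior-algebra identities apply pointwise in $\omega$ and may be propagated through the Stratonovich stochastic integral against $\delta X^k_t$ in the sense already used in the proof of Theorem \ref{characteristic foliation}.
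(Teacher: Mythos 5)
Your proof is correct, and for part (ii) it takes a genuinely different route from the paper. The paper does not reuse the conformal identity $\varphi_t^\ast\eta=\lambda_t\eta$ algebraically; instead it repeats the stochastic argument at the level of the volume form: it computes, via Cartan's formula and the product rule, $\mathscr{L}_{V_{h \cdot\delta X_t}}\Omega_\eta=-(n+1)\mathcal{R}(h \cdot\delta X_t)\Omega_\eta$, feeds this into the flow identity \eqref{arbitrary-form2}, and solves the resulting linear Stratonovich equation for $\varphi_t^\ast\Omega_\eta$, recovering the exponent $n+1$ from the factor $-(n+1)$ in the Lie derivative and matching it with \eqref{conformal-factor}. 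You instead take the conclusion $\varphi_t^\ast\eta=\lambda_t\eta$ of Theorem \ref{characteristic foliation}(ii) as given and deduce $\varphi_t^\ast\Omega_\eta=\lambda_t^{n+1}\Omega_\eta$ by pure exterior algebra (pullback commutes with $d$ and $\wedge$, binomial expansion, $\eta\wedge\eta=0$), pathwise in the sample variable. Both arguments are valid; yours is more economical, avoids a second stochastic computation, and shows that the volume statement is an algebraic consequence of being a conformal contactomorphism (hence holds for any such map, random or not), whereas the paper's route stays inside the Lemma \ref{Lie derivative form} machinery and produces the Lie-derivative formula for $\Omega_\eta$ as a by-product. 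Part (i) is handled essentially identically in both. The regularity caveat you flag (spatial differentiability of $\lambda_t$ so that $d\lambda_t$ and $\varphi_t^\ast d\eta=d(\lambda_t\eta)$ make sense pathwise) is the same smoothness of the stochastic flow that the paper itself invokes, so it is not a gap.
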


\begin{proof}
  The first assertion follows from the equality \eqref{arbitrary-form} in Lemma \ref{Lie derivative form} and the properties of symplectic forms, and has been proven in  \cite{Bismut1981,Lazaro2008}. We next only need to prove the second one. A straightforward computation using the product rule and Cartan's formula shows that
  \begin{align}
    \mathscr{L}_{V_{h \cdot\delta X_t}}\Omega_\eta &=- \mathcal{R}(h \cdot\delta X_t)\eta \wedge (d\eta)^{n}+n\eta \wedge (d\eta)^{n}  \wedge [-(d\mathcal{R}(h \cdot\delta X_t))\eta-\mathcal{R}(h \cdot\delta X_t)d\eta] \notag \\
    &= -(n+1)\mathcal{R}(h \cdot\delta X_t) \Omega_\eta,
  \end{align}
  where $V_{h \cdot\delta X_t}$ is the randomized Hamiltonian vector field with randomized Hamiltonian $h \cdot\delta X_t$. By applying the formula \eqref{arbitrary-form2}, and then solving the following equation
  \begin{align}
    \frac{d}{d t} \varphi_t^\ast \Omega_\eta=\varphi_t^\ast \mathscr{L}_{V_{h \cdot\delta X_t}}\Omega_\eta = -(n+1)\mathcal{R}(h \cdot\delta X_t)(\varphi_t) \varphi_t^\ast \Omega_\eta ,
  \end{align}
  we conclude the result as requested. 
\end{proof}

\subsection{Examples}

We now present some illustrative examples of stochastic Hamiltonian systems. For convenience, we always put the driven semimartingale $X$ as $X_t = (t, B_t)$ in the following examples, where $B$ is a standard Brownian motion, referring to the special case \eqref{SJHE-BM}.

\begin{example} (Stochastic Harmonic oscillators) The equation for Harmonic oscillator with noise
$\ddot{x}+kx=\sigma \dot{B}$ on $\mathbb{R}^1$, where the Hooke constant $k>0$ and the noise strength $\sigma>0$, is a standard stochastic symplectic Hamiltonian system if we introduce the velocity variable $y=\dot{x}$, the phase space $(M,\omega)=(\mathbb{R}^2,dx\wedge dy)$ and let $h_0(x,y)=\frac{1}{2}(kx^2+y^2)$, $h_1(x,y)=-\sigma x$. We note that the Stratonovich and It\^o differentials are identical as the coefficient $\sigma$ is a constant and the Wong--Zakai correction is equal to 0. Furthermore, for the initial state $(x_0, y_0)=(0, \sqrt{k})$, the system can be solved explicitly, i.e., 
 $x(t) = \cos(\sqrt{k} t) + \sigma \int_0^t \sin(\sqrt{k} (t-s)) dB_s$ and $y(t) = -\sqrt{k} \sin(\sqrt{k} t) + \sqrt{k} \sigma \int_0^t \cos(\sqrt{k} (t-s)) dB_s$. The second moment satisfies $\mathbb{E} \left[x(t)^2+ \frac{1}{k} y(t)^2\right] =1+\sigma^2t>0$. 
  % \begin{equation}   
  % %\label{ex:model}
  % \begin{split}
  % \left \{
  % \begin{array}{ll}
  %   x(t)=&\cos t+\sigma\int_0^t\sin (t-s)dB_s,\\
  %   y(t)=&-\sin t+\sigma\int_0^t\cos (t-s)dB_s.
  % \end{array}
  %  \right.\notag
  %   \end{split}
  % \end{equation}
We can find that the random noise is indeed a non-conservative force, but, different from the usual ones, it may ``add" energy to the system instead of dissipating the system's energy. More generally, (finite-dimensional) symplectic Hamiltonian systems with (either additive or multiplicative) noise have been widely studied, especially on the long-time behaviors of the system, such as transience, scattering theory, averaging principle, and so on; see \cite{Freidlin2012,Wulm2001,Lixm2008,WeiChaoDuan2019,Guillin2023} and the references therein. 
\end{example}

\begin{example}\label{example2} (Stochatic dissipative/damping systems)
  Consider the damped mechanical system with noise $\ddot{x}+\gamma \dot{x}+ \nabla U(x)=\sigma \dot{B}_t$ on $\R^n$, where the damping coefficient $\gamma>0$, the noise strength $\sigma>0$ and the potential function $V$ is lower bounded. When $\sigma=0$ (i.e., no random perturbation), the system is dissipative and, by introducing $y=\dot{x}$, it will coverage to the phase points where the (symplectic) Hamiltonian $h_{\text{sym}}(x,y)=U(x)+\frac{1}{2}|y|^2$ attains the local minima \cite{Wulm2001}. When $\sigma>0$, the random noise will compensate for the loss of energy caused by the damping force, and such a stochastic dissipative system will approach some non-degenerate equilibrium measure. Referring to Roberts $\&$ Spanos \cite{Roberts2003}, the unique invariant measure for this particular system is $\mu(dx,dy)=\exp\big(-\frac{2\gamma}{\sigma^2}h_{\text{sym}}\big)dxdy$ which is equal to the Liouville measure $dxdy$ if $\gamma=0$. This system is usually called a Langevin one when $\sigma=\sqrt{2\gamma}$, and it covers many famous models, e.g., the generalized Duffing oscillator (where $U(x)$ is a lower-bounded polynomial).

  We now introduce an additional variable $u\in\mathbb{R}$ such that $(M,\eta)=(\mathbb{R}^{2n+1},du-\sum_{i=1}^n y_idx^i)$ forms a contact manifold. Define a contact Hamiltonian function $h_{\text{con}}(x,y,u)=h_{\text{sym}}(x,y)+\gamma u$. In this way, the system can be rewritten as
  \begin{equation}   
  \label{ex:model2}
  \begin{split}
  \left \{
  \begin{array}{ll}
    d{x}^i&=y_idt,\\
    d{y}_i&=-[\partial_{x_i} U(x)+\gamma y_i]dt+\sigma dB_t^i,\\
    d{u}&=\left[ \frac{1}{2}\sum_{i=1}^n y_i^2- U(x)- \gamma u \right] dt + \sum_{i=1}^n \sigma x^i dB_t^i,
  \end{array}
   \right.
    \end{split}
  \end{equation}
which a stochastic contact Hamiltonian system with $h_0=h_{\text{con}}$ and $h_k(x,y,u)=-\sigma x^k$, $k=1,2,\cdots,n$. The corresponding solution flow thus preserves the contact structure up to multiplication by a conformal factor $\lambda_t=\exp\left(-\mathcal{R}(h_0)t\right)=\exp(-\gamma t)$.
\end{example}

\begin{example}\label{example3} (Stochastic Gaussian isokinetic systems) The dynamics of the Gaussian isokinetic thermostat \cite{Dettmann1995,Ruelle2000} is governed by the following system on $\mathbb{R}^{2n}$:
  \begin{equation}   
  \label{ex:model3}
  \begin{split}
  \left \{
  \begin{array}{ll}
    \dot{q}^i&=p_i/m,\\
    \dot{p}_i&=f_i(q)+\zeta_i-\alpha(p) p_i,
  \end{array}
   \right.
    \end{split}
  \end{equation}
  for $i=1,2,\cdots,n$. Here $f$ is the field of an irrotational force and is usually assumed to be a gradient field $f=-\nabla U$ where $U:\R^n \to\mathbb{R}$ is a potential function. The term $\zeta$ stands for a perturbing force such that the system is out of equilibrium; $-\alpha(p) p$ is the thermostat ensuring that the unperturbed kinetic energy is constant, i.e., $0=\frac{d}{dt}\frac{|p|^2}{m}=\frac{p}{m}\cdot(f-\alpha(p) p)$ which gives $\alpha(p) = \frac{f\cdot p}{|p|^2}$. 
  %$\alpha=(f+\xi)\cdot p / |p|^2.$ 
  As discussed by Wojtkowski $\&$ Liverani in \cite{Wojtkowski1998}, the unperturbed model can raise an L.C.S. structure when one restricts the systems to a level set of the Hamiltonian $h_0(q,p) = \frac{1}{2m} |p|^2$.
  
  We next show the perturbed model \eqref{ex:model3} is an L.C.S-type stochastic Hamiltonian system when we choose the random force $\zeta=\sigma(q) \diamond \dot{B}_t$ with $\sigma\in C^\infty(\R^n,\mathbb{R}^{n\times r})$. Consider a level set of the Hamiltonian $M_c\triangleq \{(q,p): h_0(q,p)=c \} \subset \mathbb{R}^{2n}$ for $c\in \mathbb{R}$. We can find that $M_c$ endowed with the 1-form $\theta_c=-\sum_{i=1}^n \frac{f_i}{2c} dq^i$ and the 2-form $\omega_c=\sum_{i=1}^n dq^i\wedge dp_i+\theta_c\wedge \sum_{j=1}^n p_jdq^j$ forms a L.C.S. manifold, since $d\omega_c=\theta_c\wedge\omega_c$. Actually, the manifold $(M_c,\omega_c,\theta_c)$ is globally conformal symplectic (G.C.S.) due to the exactness of $\theta_c=d(\frac{U}{2c})$. Denote by $V_{h_0}$ the vector field corresponding to the unperturbed version of \eqref{ex:model3}. We have $\omega_c(V_{h_0}, \cdot) = d h_0$ on $M_c$, that is, $V_{h_0}$ is the conformally Hamiltonian vector field associated with the Hamiltonian $h_0$ \cite{Wojtkowski1998}. Additionally, we introduce functions $h_k(q,p)\equiv h_k(q)$, $k=0,1,\cdots,r$, such that their conformally Hamiltonian vector fields are given by $V_{h_k} = \sum_{i=1}^n \sigma_k^i \frac{\partial}{\partial p_i}$, that is, $d h_k = \omega_c(V_{h_k}, \cdot) = - \sum_{i=1}^n \sigma_k^i dq^i$. This gives $h_k(q)= C_k -\int_0^q \sigma_k(x) dx$, where $C_k$'s are constants.
  In this way, the perturbed system \eqref{ex:model3} can be regarded as a stochastic Hamiltonian system with Hamiltonians $(h_k: k=0,1,\cdots,r)$ on the G.C.S. manifold $(M_c,\omega_c,\theta_c)$.
\end{example}

\section{The stochastic Hamilton--Jacobi--Bellman (HJB) theory}

We note that the Hamilton--Jacobi theory is an alternative formulation of classical mechanics, equivalent to the Hamiltonian one. In this section, we aim to show that an analogous result holds for the stochastic setting. More precisely, based on Theorem \ref{characteristic foliation}, we will discuss contact and L.C.S. cases, respectively. 

First of all, we briefly review the core idea of deterministic Hamilton--Jacobi theory on the symplectic manifold. Let $Q$ be an $n$-dimensional smooth manifold serving as the configuration space. Denote by $TQ$ and $T^\ast Q$ the tangent and cotangent bundles over $Q$ respectively. Consider the canonical Liouville 1-form $\theta_Q$ ($=p_idq^i$ locally) on $T^\ast Q$. The canonical symplectic 2-form on $T^\ast Q$ is given by $\omega_Q=-d\theta_Q$ ($=dq^i\wedge dp_i$ locally). We remark that the Einstein summation convention would be used (especially for the expressions of local coordinates) when there is no ambiguity. The classical Hamilton--Jacobi theory consists of finding a principal (or generating) function $S(q,t)$, that fulfills
\begin{equation}\label{H-J}
    \frac{\partial S}{\partial t}+h_0\left( q,\frac{\partial S}{\partial q},t \right)=0,\quad i=1,\cdots,n,
\end{equation}
where $h_0\in C^\infty (T^* Q\times\R)$ is the Hamiltonian function of the system. For the important situation that $h_0$ is independent of time and $S$ is separable in time (i.e., $S(q,t) = W(q) - t \cdot \text{const}$), one easily see that \eqref{H-J} can be rewritten as
\begin{equation}\label{H-J-2}
  d(h_0 \circ dW) = 0,
\end{equation}
where the 1-form $dW$ on $Q$ is understood as a section of the cotangent bundle. This 1-form transforms the integral curves of a vector field on $Q$ into integral curves of the Hamiltonian vector field $V_{h_0}$ on $T^\ast Q$. Such a geometric procedure has been extended to contact systems, L.C.S. systems, and many other contexts, as mentioned in the introduction.

\subsection{Stochastic HJ equations for contact cases}

In this subsection, we assume that the underlying contact manifold $(M,\eta)$ is the extended phase space endowed with its canonical contact structure. Namely, $M=T^{\ast}Q\times \mathbb{R}$ with $\mathrm{dim}(M)=2n+1$, and $\eta=du-\rho^\ast \theta_Q$ with $\rho:T^{\ast}Q\times \mathbb{R}\to T^{\ast}Q$ being the canonical projection. We will show that there are two approaches to developing the HJ theory for stochastic contact Hamiltonian systems, via considering sections of $\pi_Q:M \to Q$ and $\pi_{\widehat{Q}}:M \to Q\times\mathbb{R}$ respectively.

\subsubsection{A contact HJB formalism on $Q$}

Let $\pi_Q:J^1Q\to Q$ be the space of 1-jets $j^1 f$ of smooth functions (or germs) $f:Q\to \mathbb{R}$ \cite{Geiges2008,Lee2013}. One can identify $J^1Q$ with the space $T^{\ast}Q\times \mathbb{R}$ by mapping $j^1f(q)$ to $(df_q,f(q))$. A section $\Gamma: Q\to J^1Q$ is called Legendrian if its image is a Legendrian submanifold of $J^1Q$, or equivalently, $\Gamma^\ast \eta=0$. The following lemma is taken from \cite[Proposition 22.37]{Lee2013} or \cite[Proposition 3]{deLeon2021contact}.

\begin{lemma}\label{1-jet}
 A local section of $J^1Q$ is Legendrian if and only if it is the 1-jet of a function.
\end{lemma}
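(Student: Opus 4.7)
My plan is to reduce the statement to a direct local-coordinate computation. Since $J^1Q \cong T^*Q \times \R$ carries the canonical contact structure with contact form $\eta = du - \theta_Q$, expressed locally as $\eta = du - p_i\,dq^i$, I would first recall that a submanifold of the $(2n+1)$-dimensional contact manifold $J^1Q$ is Legendrian precisely when it has dimension $n$ and is tangent to the contact distribution $\ker\eta$. Because the image of any smooth section $\Gamma: Q \to J^1Q$ is automatically an embedded $n$-dimensional submanifold (its projection back to $Q$ is the identity, so $\dim(\mathrm{Im}\,\Gamma) = n$), the Legendrian condition reduces to the integral-submanifold condition $T(\mathrm{Im}\,\Gamma)\subset \ker\eta$, which in turn is equivalent to $\Gamma^*\eta = 0$. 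This is precisely the working characterization already noted in the excerpt just before the lemma, so the lemma becomes a question about when $\Gamma^*\eta$ vanishes.

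Next, I would write the section locally in fibered coordinates as $\Gamma(q) = \bigl(q^i,\, \Gamma_i(q),\, \Gamma_0(q)\bigr)$ for smooth $\Gamma_i, \Gamma_0$ and compute the pullback:
\begin{equation*}
\Gamma^*\eta \;=\; d\Gamma_0 \,-\, \Gamma_i\,dq^i \;=\; \Bigl(\tfrac{\partial \Gamma_0}{\partial q^i} - \Gamma_i\Bigr)\,dq^i.
\end{equation*}
Hence $\Gamma^*\eta = 0$ if and only if $\Gamma_i = \partial \Gamma_0/\partial q^i$ for every $i$, which is exactly the statement that $\Gamma = j^1 \Gamma_0$. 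Conversely, for any smooth $f: Q \to \R$, the section $j^1 f$ takes the coordinate form $(q,\partial f/\partial q, f)$ and therefore trivially satisfies $(j^1 f)^*\eta = 0$. Both directions are thereby established pointwise in a chart.

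The only step that requires any care is verifying that the candidate ``primitive'' $\Gamma_0$ extracted from the local computation is actually globally well-defined as a single smooth function on the domain of $\Gamma$. This is immediate because $\Gamma_0$ is nothing but the composition of $\Gamma$ with the canonical projection $\mathrm{pr}_{\R}: T^*Q\times\R\to\R$, which is a coordinate-independent map. Consequently the local identification glues without any cohomological or integrability obstruction, yielding the lemma as stated. I expect no genuine obstacle here beyond correctly identifying the canonical contact form on $J^1Q$ and tracking the section's coordinate representation; the content is essentially the observation that the vanishing of $\Gamma^*\eta$ is exactly the classical closedness-type condition $\partial \Gamma_0/\partial q^i = \Gamma_i$ characterizing a gradient.
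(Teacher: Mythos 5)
Your proof is correct. The paper does not prove this lemma at all --- it is quoted directly from \cite[Proposition 22.37]{Lee2013} and \cite[Proposition 3]{deLeon2021contact} --- and your argument is exactly the standard one behind those citations: using the characterization $\Gamma^\ast\eta=0$ (which the paper adopts as the definition of a Legendrian section), the pullback computation $\Gamma^\ast\eta=\bigl(\partial\Gamma_0/\partial q^i-\Gamma_i\bigr)dq^i$ in fibered coordinates, together with the observation that $\Gamma_0=\mathrm{pr}_{\mathbb{R}}\circ\Gamma$ is globally defined, settles both directions. So your write-up in effect supplies the self-contained proof that the paper delegates to the references, with no gaps.
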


Legendrian submanifolds play an important role in contact geometry, similar to Lagrangian submanifolds in symplectic geometry. In fact,  the image of a contact Hamiltonian vector field, suitably included in the contactified tangent bundle, is a Legendrian submanifold \cite{deLeon2019JMP}. Based on Theorem \ref{characteristic foliation} or Corollary \ref{Liouville}, the stochastic Hamiltonian flow $\varphi_t$ is indeed a contactomorphism, that is, there exist a nowhere zero function $\lambda_t$ given in \eqref{conformal-factor} such that $\varphi_t^\ast \eta=\lambda_t \eta$. We thus have the following lemma.

\begin{lemma}\label{Legend}
    Let $L\subset J^1Q$ be Legendrian. Then $\varphi_t(L)\subset J^1Q$ is also Legendrian.
\end{lemma}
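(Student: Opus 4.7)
The plan is to exploit the conformal contactomorphism property of the stochastic Hamiltonian flow $\varphi_t$ established in Theorem \ref{characteristic foliation}, namely that $\varphi_t^\ast \eta = \lambda_t \eta$ for a nowhere-zero conformal factor $\lambda_t$ (given explicitly in \eqref{conformal-factor} in the current setting where the characteristic leaf is all of $M = J^1 Q \cong T^\ast Q \times \R$). The claim is a classical fact about conformal contactomorphisms, and the stochastic aspect enters only in that we apply it path-by-path, almost surely in $\omega$ and for $t$ below the lifetime.

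First, I would recall that a submanifold $L \subset J^1 Q$ is Legendrian exactly when $\dim L = n$ and the pull-back of $\eta$ by the inclusion $i_L : L \hookrightarrow J^1 Q$ vanishes. Since Section 3.2 shows that $\varphi_t(\cdot,\omega)$ is a local $C^1$-diffeomorphism for a.e.\ $\omega$ and every $t < \tau^h(\omega)$, the image $\varphi_t(L)$ is an immersed submanifold of the same dimension $n$ as $L$, so the dimension requirement is automatic.

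The second step is to verify the Legendrian condition on $\varphi_t(L)$. Let $p \in L$ and let $v \in T_p L$; then $(\varphi_t)_\ast v$ is an arbitrary tangent vector to $\varphi_t(L)$ at $\varphi_t(p)$. Using the conformal identity $\varphi_t^\ast \eta = \lambda_t \eta$, I would compute
\begin{equation*}
\eta_{\varphi_t(p)}\bigl((\varphi_t)_\ast v\bigr) \;=\; (\varphi_t^\ast \eta)_p(v) \;=\; \lambda_t(p)\, \eta_p(v) \;=\; 0,
\end{equation*}
where the last equality uses that $L$ is Legendrian, hence $i_L^\ast \eta = 0$. Equivalently, in terms of the inclusion $j : \varphi_t(L) \hookrightarrow J^1 Q$, this says $j^\ast \eta = 0$. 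Together with the dimension count, this makes $\varphi_t(L)$ Legendrian, almost surely, for each $t < \tau^h$.

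There is essentially no analytic obstacle here: once Theorem \ref{characteristic foliation} supplies the conformal factor $\lambda_t$ as a nowhere-zero (random) function, the argument is a one-line pull-back computation. The only point requiring a small amount of care is the stochastic book-keeping, i.e.\ asserting the conclusion on the full-measure event on which $\varphi_t$ is a diffeomorphism and $\lambda_t$ is nowhere zero; this is the intersection of the exceptional-null sets already controlled in Section 3.2 and in the derivation of \eqref{conformal-factor}, so it poses no additional difficulty.
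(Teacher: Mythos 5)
Your proposal is correct and follows essentially the same route as the paper, which presents this lemma as an immediate consequence of the conformal contactomorphism property $\varphi_t^\ast\eta=\lambda_t\eta$ established in Theorem \ref{characteristic foliation} (with the conformal factor \eqref{conformal-factor}). Your pull-back computation together with the dimension count and the almost-sure bookkeeping is exactly the argument the paper leaves implicit.
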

 
To compare with the deterministic scenario and make things clearer, we start from the random Hamiltonian vector field $V_{h \cdot\delta X_t}$ introduced in %the equation 
\eqref{SJHE3}, and focus on our arguments with fixed time $t\in\mathbb{R}_+$ and sample $\omega\in{\mathbf\Omega}$. In this way, the randomized Hamiltonian $h \cdot\delta X_t$ is regard as a function from $J^1Q$ to $\mathbb{R}$; See the diagram below:
$$
    \centering
    \begin{tikzcd}[column sep=scriptsize, row sep=scriptsize]
  J^1Q\cong T^{\ast}Q\times \mathbb{R}
  \arrow[ddrr, bend left=35, "h \cdot\delta X_t"]
  \arrow[ddrr, swap, "\pi_u"]
  \arrow[dd, swap, "\rho"] & & \\
  && &\\
   T^{\ast}Q  & 
      & \mathbb{R}.
      %\\ {(a)} && 
\end{tikzcd}
$$
We consider a time-dependent section $\Gamma_t:Q\to J^1Q$ such that $\Gamma_t=j^1S_t=(dS_t,S_t)$ for a function $S:Q\times\mathbb{R}_+\to \mathbb{R}$. By Lemma \ref{1-jet}, $\Gamma_t(Q)$ is a Legendrian submanifold of $(J^1Q,\eta)$, which implies that $(\rho \circ \Gamma_t)(Q)$ is a Lagrangian submanifold of $(T^{\ast}Q,\omega_Q)$. Furthermore, given the randomized Hamiltonian vector field $V_{h \cdot\delta X_t}$ on $J^1Q$, we can define a projected vector field $V_{h \cdot\delta X_t}^{\Gamma}$ on $Q$ as follows:
\begin{equation}\label{randomized vector field}
    V_{h \cdot\delta X_t}^\Gamma =T\pi_Q \circ V_{h \cdot\delta X_t} \circ \Gamma_t,
\end{equation}
where $T\pi_Q$ is the tangent mapping from $T(J^1Q)$ to $TQ$. We summarize the discussion for this definition of $V_{h \cdot\delta X_t}^\Gamma$ in the following commutative diagram:
$$
     \begin{tikzcd}[column sep=scriptsize, row sep=scriptsize]
  J^1Q
  \arrow[rr, "V_{h \cdot\delta X_t}"]
  \arrow[dd, "\pi_Q"] 
  & & T(J^1Q)
  \arrow[dd, "T\pi_Q"] \\
  &&\\
 Q
   \arrow[rr, "V_{h \cdot\delta X_t}^{\Gamma}"]
   \arrow[uu, bend left=50, "\Gamma_t"]
   &   & TQ.
    %\\{(b)} & & & 
\end{tikzcd}
$$
Base on the idea of the classical Hamilton--Jacobi theory, on the one hand, we want $\Gamma_t$ to fulfill 
\begin{equation}\label{Gamma-related}
    V_{h \cdot\delta X_t} \circ \Gamma_t = T\Gamma_t \circ V_{h \cdot\delta X_t}^\Gamma,
\end{equation}
i.e., the random vector fields $V_{h \cdot\delta X_t}$ and $V_{h \cdot\delta X_t}^\Gamma$ are $\Gamma$-related. This means that, provided with an integral process of $V_{h \cdot\delta X_t}^\Gamma$, one can obtain an integral process of the random Hamiltonian vector field $V_{h \cdot\delta X_t}$, hence, a solution of the stochastic contact Hamiltonian system. We refer to such a solution as a horizontal one since it is indeed on the image of a 1-jet on $Q$. On the other hand, we attempt to derive an equivalent equation, which is similar to \eqref{H-J-2}, from the condition \eqref{Gamma-related}. That is, we will obtain an equation for the principal function $S$ (also referred to as the contact generating function).

Now suppose that $L\subset J^1Q$ is the graph of $j^1S_0$, so $S_0$ is a contact generating function for $L$. By Lemmas \ref{1-jet} and \ref{Legend}, it is expected that, at least for a short time, $\varphi_t(L)$ is the graph of $j^1S_t$ for $S_t$ a function on $Q$. We can adjust
the arbitrary constants in $S_t$ so it depends smoothly on $t$ and coincides with $S_0$ at $t=0$. To this end, we require the following preparation.

\begin{lemma}\label{projected-lem}
    Let $L\subset J^1Q$ be Legendrian. Let $z_0\in L$ be a regular point (that is, the $T_{z_0}\pi_Q|_L:T_{z_0}L\to T_{\pi_Q({z_0})}Q$ is an isomorphism) and $q_0=\pi_Q({z_0})\in Q$ be the projective image. Let $\varphi$ be the stochastic Hamiltonian flow of \eqref{SJHE} with the initial point $z_0$, and let $\tau^h$ be the maximal stopping time. Then, there exist two neighborhoods $\mathcal{U}_{z_0}\subset L$, $\mathcal{U}_{q_0}\subset Q$, of $z_0$ and $q_0$ respectively, and a random map $\tau:\mathcal{U}_{z_0}\times {\mathbf\Omega}\to \mathbb{R}_+$ such that, for any $z\in \mathcal{U}_{z_0}$, $\tau_{z}=\tau(z):{\mathbf\Omega}\to \mathbb{R}_+$ is a stopping time and the equation
    \begin{align}\label{projected-eq}
        \pi_Q(\varphi_t(z,\omega))=q,\quad t\in [0,\tau_{z}),
    \end{align}
    has a unique solution in $\mathcal{U}_{q_0}$, denoted by $q=\psi_t(z,\omega)$.
    $$
     \begin{tikzcd}[column sep=scriptsize, row sep=scriptsize]
     z\in\mathcal{U}_{z_0}\subset L\subset J^1Q
     \arrow[rr, "\varphi_t"]
     \arrow[ddrr, "\psi_t"] & & 
     \varphi_t(z,\omega)\in L\subset J^1Q
     \arrow[dd, "\pi_Q"] \\
     & &\\
     & & q=\psi_t(z,\omega)\in \mathcal{U}_{q_0}\subset Q.
     \end{tikzcd}
     $$ 
    Furthermore, $\psi_\cdot(z):[0,\tau_{z})\times {\mathbf\Omega}$ is a semimartingale for any $z\in \mathcal{U}_{z_0}$, and $\psi_t(\cdot,\omega): \mathcal{U}_{z_0}\to \mathcal{U}_{q_0}$ is a diffeomorphism for any $t\in [0,\tau_{z})$ which depends continuously on $t$.
\end{lemma}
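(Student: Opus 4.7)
The plan is to view this as a parameter-dependent (and sample-dependent) inverse function theorem applied to $\psi_t(z,\omega):=\pi_Q(\varphi_t(z,\omega))$, and then read off measurability of $\tau_z$ from the continuity properties of the stochastic flow. The regularity hypothesis at $z_0$ is what switches on the inverse function theorem at $t=0$, while the $C^1$-diffeomorphism property of $\varphi_t(\cdot,\omega)$ (guaranteed by \cite[Section V.2]{IW89}) propagates it forward in time and in the base point $z$.

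\textbf{Step 1: initial invertibility.} By the classical inverse function theorem and the regularity of $z_0$, there exist neighborhoods $\tilde{\mathcal{U}}_{z_0}^{(0)}\subset L$ and $\tilde{\mathcal{U}}_{q_0}^{(0)}\subset Q$ on which $\pi_Q|_L$ is a diffeomorphism. Since $\varphi_0=\mathrm{id}_M$, the chain rule gives
\begin{equation*}
T_z\psi_t = T_{\varphi_t(z,\omega)}\pi_Q \circ T_z\varphi_t(\cdot,\omega)\big|_{T_zL},
\end{equation*}
which at $(t,z)=(0,z_0)$ coincides with $T_{z_0}\pi_Q|_{T_{z_0}L}$ and is therefore an isomorphism.

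\textbf{Step 2: the stopping time.} Choose relatively compact neighborhoods $\mathcal{U}_{z_0}\subset\tilde{\mathcal{U}}_{z_0}^{(0)}$ and $\mathcal{U}_{q_0}\subset\tilde{\mathcal{U}}_{q_0}^{(0)}$ with $\pi_Q(\overline{\mathcal{U}}_{z_0})\subset\mathcal{U}_{q_0}$, and define
\begin{equation*}
\tau_z(\omega) := \inf\bigl\{\, t\in[0,\tau^h(\omega)) \,:\, \psi_t(z,\omega)\notin\mathcal{U}_{q_0} \text{ or } T_z\psi_t \text{ is singular}\,\bigr\}\wedge\tau^h(\omega).
\end{equation*}
Because $(t,z)\mapsto\varphi_t(z,\omega)$ is continuous and $C^1$ in $z$, and both conditions inside the infimum are open conditions whose indicators are right-continuous and $\{\mathscr{F}_t\}$-adapted, $\tau_z$ is a genuine stopping time. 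Step 1 ensures $\tau_{z_0}(\omega)>0$ a.s., and the same continuity makes $\tau_z(\omega)>0$ for all $z$ in a sufficiently small $\mathcal{U}_{z_0}$.

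\textbf{Step 3: diffeomorphism and regularity.} For $t\in[0,\tau_z(\omega))$, $T_z\psi_t$ is invertible, so the inverse function theorem provides a local smooth inverse of $\psi_t(\cdot,\omega)$ near each $z$; the uniform control $\psi_t(\overline{\mathcal{U}}_{z_0})\subset\mathcal{U}_{q_0}$ built into $\tau_z$, together with the injectivity of $\varphi_t(\cdot,\omega)$ on $\mathcal{U}_{z_0}\subset L$ (inherited from the fact that $\varphi_t$ is a diffeomorphism of $M$) and the injectivity of $\pi_Q|_L$ on $\tilde{\mathcal{U}}_{z_0}^{(0)}$, upgrade this to a global diffeomorphism $\psi_t(\cdot,\omega):\mathcal{U}_{z_0}\to\mathcal{U}_{q_0}$. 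Continuity in $t$ is inherited from that of $\varphi$, and since $\pi_Q$ is smooth, the composition $t\mapsto\pi_Q(\varphi_t(z,\omega))$ is an $M$-valued (in fact $Q$-valued) semimartingale by the transfer principle.

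\textbf{Main obstacle.} The delicate point is to arrange that a \emph{single} domain $\mathcal{U}_{z_0}$ works simultaneously for all $z$ and all $\omega$, so that the stopping time depends only on $z$ and $\omega$ and not on an $\omega$-dependent choice of neighborhood. This is resolved by fixing $\mathcal{U}_{z_0}$ via Step 1 (at $t=0$ the conditions are deterministic and uniform in $z$ on a small enough neighborhood) and then letting $\tau_z$ absorb all the randomness: it is precisely the first exit time from the regime in which the quantitative estimates of the inverse function theorem remain valid.
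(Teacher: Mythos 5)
Your proposal is correct and follows essentially the same route as the paper: use the regularity of $z_0$ (with $\varphi_0=\mathrm{id}$, so the $t=0$ data are deterministic) to fix neighborhoods, define $\tau_z$ as the first exit time from the regime where the projected Jacobian is nondegenerate and $\psi_t(z)$ stays in $\mathcal{U}_{q_0}$, and invoke the inverse/implicit function theorem for the diffeomorphism property. The only cosmetic difference is that the paper works in local coordinates with the determinant process $\Xi_t$ and then writes a Stratonovich SDE for $\psi$ to get the semimartingale property, whereas you obtain it directly from composing the flow with the smooth projection $\pi_Q$, which is equally valid.
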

\begin{proof}
    The key idea is to construct the solution by leveraging the regularity of the initial point $z_0$ and the continuity of the differential of the stochastic Hamiltonian flow $\varphi$. The properties of the solution are then established by applying the Implicit Function Theorem and the Stratonovich differential rules. Notably, similar arguments in the symplectic setting have been employed by L\'azaro-Cam\'i $\&$ Ortega \cite{LazaroCam2009}. With slight modifications, analogous results can also be obtained by replacing $Q$ with $Q\times \mathbb{R}$ or $J^1Q$ with $T_\theta^\ast Q$.

    We begin by selecting an open and sufficiently small neighborhood $U_{z_0}\subset L$ of $z_0$, such that $\pi_Q|_{U_{z_0}}$ is a diffeomorphism onto its image. Furthermore, we choose a set of local coordinates $\{q^1,q^2,\cdots,q^n\}$ on $U_{q_0}=\pi_Q(U_{z_0})$. By Lemma \ref{1-jet}, the induced coordinates on $U_{z_0}$ take the form $\{q^i,\frac{\partial S_0}{\partial q^i}, S_0\}$ for some function $S_0\in C^\infty(U_{q_0};\mathbb{R})$.
    
    For any $z\in U_{z_0}$, we define the first exit time at which the process  $\pi_Q(\varphi(z))$ exits from $U_{q_0}$ as 
    $$
    \tau_{U_{q_0}} (z,\omega)=\inf \{0\leq t <\tau^h\;|\;\pi_Q\circ \varphi_t(z,\omega)\notin U_{q_0}\}.
    $$
    Consider the map $\Psi_t=\pi_Q\circ \varphi_t$ for $0\leq t< \tau_{U_{x_0}}(z,\omega)$. Since $z_0\in L$ is a regular point, it follows that $\det\left(\frac{\partial \Psi_0^i}{\partial q^i}(z_0)\right)\neq 0$ a.s., for $i=1,\cdots,n$. Moreover, the continuity of the derivative of $\Psi_0(\cdot,\omega):U_{z_0}\to U_{q_0}$ ensures the existence of a neighborhood $\mathcal{U}_{z_0}\subset U_{z_0}$ such that 
    $$
    \det\left(\frac{\partial \Psi_0^i}{\partial q^i}(z)\right)> 0\;\;\text{a.s., for any } z\in \mathcal{U}_{z_0}.
    $$
    Next, we define the map  $\Xi_t(z,\omega)=\det\left(\frac{\partial \Psi_t^i}{\partial q^i}(z,\omega)\right):\mathcal{U}_{z_0}\times [0,\tau_{U_{q_0}} (z,\omega))\times {\mathbf\Omega} \to \mathbb{R}.$
    It is evident that for any $z\in \mathcal{U}_{z_0}$, $\Xi(z)$ is a well-defined and continuous semimartingale with $\Xi_0(z)>0$, due to the continuity of the differential of the stochastic Hamiltonian flow $\varphi$. This allows us to define a first exit time as
    $$
    \tau(z,\omega)=\inf \{0<t<\tau_{U_{q_0}}\;|\;\Xi_t(z,\omega)\notin \mathbb{R}_+\},
    $$
    To establish the existence and uniqueness of solutions to equation \eqref{projected-eq}, it suffices to solve the equation
    \begin{align}\label{projected-eq-2}
        \pi_Q\left(\varphi_{t\wedge \tau}(z,\omega) \right)=q,
    \end{align}
    where ``$\wedge$" denotes the minimum operation. Noting that $\varphi_{t\wedge \tau}(z)\in U_{z_0}$ if $z\in \mathcal{U}_{z_0}$, we can describe $\varphi_{t\wedge \tau}(z)$ in terms of the chosen local coordinates. Defining $\tau_z=\tau(z,\omega)$ and $\mathcal{U}_{x_0}=\pi_Q(\mathcal{U}_{z_0})$, an application of the Implicit Function Theorem guarantees the existence of a unique solution $\psi_t(z,\omega)$ to equation \eqref{projected-eq-2}.
    
    Furthermore, equation \eqref{projected-eq-2} can be rewritten in its Stratonovich differential form as
    \begin{align}\label{projected-eq-2-stratonovich}
        \delta\psi_t(z)=\sum_{k=1}^d\left[T_{\psi_t(z)}\Psi\right]^{-1}\left(
        T_{\varphi_{t\wedge \tau}\left(\psi_t(z)\right)}\left(q \circ \pi_Q\right)
        \Big(V_{h_k}\big(\varphi_{t\wedge \tau}\left(\psi_t(z)\right)\big) \right)
        \Big) (dt, \delta  \xi_t^k).
    \end{align}
    Here $q$ should be understood as the local chart maps associated with the chosen coordinate system. The properties of $\psi$ follow naturally, as $\psi_t(z,\omega)$ is indeed the unique stochastic flow associated with the SDE \eqref{projected-eq-2-stratonovich}.
\end{proof}

\begin{remark}
    The preceding lemma ensures that we can always work in a neighborhood of regular points of the Legendrian submanifold $L$ and project the Hamiltonian semimartingale, parameterized by $J^1Q$, onto a process defined on the base manifold $Q$, up to a suitable stopping time. However, for sufficiently large $t$, the image $\varphi_t(L)$ may cease to be the graph of the differential of a function; in particular, the function $S(q,t)$ may blow up in finite time. In this case, one says that a caustic forms \cite{Marsden1978}. Nevertheless, the interpretation of $\varphi_t(L)$ as a Legendrian submanifold remains valid; it may simply ``bend over", failing to maintain the structure of a graph. For a more in-depth discussion on contact geometry, we refer to \cite{Geiges2008} and the references therein.
\end{remark}

For a 1-form $\alpha$ on $M=T^{\ast}Q\times \mathbb{R}$ with local expression $\alpha = \alpha_i d q^i + \alpha^i d p_i + \alpha_0 dz$, we define its \textit{horizontal projection} as the tangent vector field $\alpha_{\mathbf h}$ on $Q$, given locally by
\begin{equation}\label{local-eprs}
  \alpha_{\mathbf h} = \alpha^i \pt_{q^i}.
\end{equation}
Similarly, the \textit{vertical lift} of $\alpha$, denoted by $\alpha^{\mathbf{v}}$, is given by
\begin{equation}
    \alpha^{\mathbf{v}}=-\alpha_i\frac{\partial}{\partial p_i}.
\end{equation}
We have the following lemma, which is a variant of \cite[Lemma 5.2.5]{Marsden1978} in the symplectic setting.

\begin{lemma}\label{lemma-1}
Let $S$ be a smooth function on $Q$. Then
\begin{itemize}
  \item[(i)] $(j^1S \circ \pi_Q)_* \mathcal R = 0$.
  \item[(ii)] for any smooth function $f: M=T^{\ast}Q\times \mathbb{R}\to \R$ and 1-form $\alpha$ on $M$,
\begin{equation*}
  \Lambda\left( (j^1 S\circ \pi_Q)^* \alpha - \alpha|_{j^1 S}, df \right) = d(f\circ j^1 S) (\alpha|_{j^1 S})_{\mathbf{h}}.
\end{equation*}
\end{itemize}
\end{lemma}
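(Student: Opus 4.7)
My plan is to verify both statements in the canonical Darboux coordinates $(q^i, p_i, u)$ on $M = T^{\ast}Q\times \R$, where $\eta = du - p_i dq^i$, $\mathcal{R} = \pt_u$, and the 1-jet map reads $j^1 S \colon q\mapsto (q, \pt_{q^i} S, S(q))$.

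For part (i), I will observe that the composition $j^1 S\circ \pi_Q\colon M\to M$ factors as $M\xrightarrow{\pi_Q} Q \xrightarrow{j^1 S} M$, and since $\mathcal{R} = \pt_u$ is vertical for $\pi_Q$ (i.e.\ $T\pi_Q\cdot\mathcal{R} = 0$), the chain rule will immediately yield $(j^1 S\circ \pi_Q)_* \mathcal{R} = Tj^1 S\cdot T\pi_Q\cdot \mathcal{R} = 0$.

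For part (ii), I plan to work locally. Writing $\alpha = \alpha_i dq^i + \alpha^i dp_i + \alpha_0 du$, I will compute the pullbacks of coordinate differentials as $(j^1 S\circ \pi_Q)^{\ast} dq^i = dq^i$, $(j^1 S\circ \pi_Q)^{\ast} dp_i = \partial^2_{q^j q^i} S\, dq^j$, and $(j^1 S\circ \pi_Q)^{\ast} du = \partial_{q^i} S\, dq^i$. Evaluated at $z = j^1 S(q)$, the 1-form $\beta := (j^1 S\circ \pi_Q)^{\ast} \alpha|_z - \alpha|_z$ then carries $dq^i$-coefficient $\alpha^j|_z \partial^2_{q^i q^j} S + \alpha_0|_z \partial_{q^i} S$, $dp_i$-coefficient $-\alpha^i|_z$, and $du$-coefficient $-\alpha_0|_z$. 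I will then insert this into the contact expression $\Lambda^\sharp(\gamma) = \gamma^i \pt_{q^i} - (\gamma_i + p_i \gamma_0)\pt_{p_i} + p_i \gamma^i \pt_u$ obtained from \eqref{contact-case}, and use the Legendre relation $p_i = \pt_{q^i} S$ on $j^1 S(Q)$ to cancel the $\alpha_0$-contributions in the $\pt_{p_i}$-slot, obtaining $\Lambda^\sharp(\beta)|_z = -\alpha^i \pt_{q^i} - \alpha^j \partial^2_{q^i q^j} S\, \pt_{p_i} - \alpha^i \partial_{q^i} S\, \pt_u$. The decisive step is to recognise this expression as $T_q j^1 S$ applied to the horizontal projection $(\alpha|_z)_{\mathbf h} = \alpha^i\pt_{q^i}$ (up to a convention-dependent sign), i.e.\ a tangent vector to the Legendrian image. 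Pairing with $df$ and invoking $df\circ Tj^1 S = d(f\circ j^1 S)$ will then deliver the claimed identity.

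The hardest part will be the careful collapse of coefficients in (ii): the $\alpha_0$-cancellation in the $\pt_{p_i}$-slot hinges entirely on the horizontality relation $p_i = \pt_{q^i} S$ at $z$, which is precisely the condition that makes $j^1 S(Q)$ Legendrian (Lemma \ref{1-jet}). A more invariant way to package the mechanism is that $\Lambda^\sharp$ sends $\beta$ into $T_z j^1 S(Q)$, with the Reeb direction automatically suppressed by part (i); the whole calculation then descends via $Tj^1 S$ to the horizontal projection on $Q$.
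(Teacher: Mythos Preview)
Your proposal is correct and follows essentially the same approach as the paper: both arguments work in Darboux coordinates, compute the difference $\beta = (j^1 S\circ \pi_Q)^*\alpha - \alpha|_{j^1 S}$ explicitly, and then pair it against $df$ through $\Lambda$, using $p_i = \partial_{q^i} S$ on the image of $j^1 S$ to collapse the $\alpha_0$-terms. Your extra observation that $\Lambda^\sharp(\beta)$ equals (up to sign) $T_q j^1 S$ applied to $(\alpha|_{j^1 S})_{\mathbf h}$ is a slightly more geometric repackaging of the same chain-rule identity the paper writes out termwise, and your sign caveat is appropriate.
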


\begin{proof}
We first note that the map $j^1 S\circ \pi_Q: M \to M$ has the following local expression:
\begin{equation*}
  j^1 S\circ \pi_Q(q,p,z) = (q, dS(q), S(q)).
\end{equation*}
Assertion (i) follows from the fact that $\mathcal R$ has the local expression $\mathcal R = \pt_u$. To prove (ii), we assume that $\alpha$ has the local expression \eqref{local-eprs}. Then
\begin{equation*}
  (j^1 S\circ \pi_Q)^* \alpha = \alpha_i|_{j^1 S} d q^i + \alpha^i|_{j^1 S} d \pt_i S + \alpha_0|_{j^1 S} dS,
\end{equation*}
and thus,
\begin{equation*}
  (j^1 S\circ \pi_Q)^* \alpha - \alpha|_{j^1 S} = \left( \alpha^i|_{j^1 S} \pt_i \pt_j S + \alpha_0|_{j^1 S} \pt_j S \right) dq^j - \alpha^i|_{j^1 S} dp_i - \alpha_0|_{j^1 S} dz.
\end{equation*}
On the other hand, as
\begin{equation*}
  \Lambda = \left( \pt_{q^i} + p_i \pt_z \right) \wedge \pt_{p_i},
\end{equation*}
we have
\begin{equation*}
  \begin{split}
    \Lambda\left( (j^1 S\circ \pi_Q)^* \alpha - \alpha|_{j^1 S}, df \right) &= \left( \alpha^i|_{j^1 S} \pt_i \pt_j S \right) \pt_{p_i} f|_{j^1 S} + \alpha^i|_{j^1 S} \left( \pt_{q^i} + \pt_i S \pt_z \right) f|_{j^1 S} \\
    &= \alpha^i|_{j^1 S} \pt_{q^i} [f(q, dS, S)].
  \end{split}
\end{equation*}
This proves the result.
\end{proof}

We next present one of the main results of this section.

\begin{theorem} (Stochastic contact HJ formalism) \label{SCJH-1}
  Let $M=J^1Q$ be endowed with the canonical contact structure $\eta$. Consider the stochastic Hamiltonian system given in \eqref{SJHE}.  Let $S:Q\times [0,\tau_Q) \to \mathbb{R}$ with $S|_{t=0} = f\in C^\infty(Q)$ and $\tau_Q$ the maximal stopping time of \eqref{Q-curve}.  Then, for each $t\in [0,\tau_Q)$, the following statements are equivalent:
  \begin{itemize}
    \item[($i$)] for every semimartingale $q(t)$ in $Q$ satisfying
    \begin{align}\label{Q-curve}
    \delta q(t) = (\pi_Q)_\ast  \mathcal{H}\big(X_t,j^1S(q(t),t)\big) \delta  X_t,
    \end{align}
    the process $(\omega,t)\mapsto j^1S(q(t),t)$ is a Hamiltonian semimartingale solving \eqref{SJHE}. 
    \item[($ii$)] $S$ satisfies the stochastic contact HJB equation
    \begin{equation}\label{SHJE-1-0}
    \left\{
     \begin{aligned}
        &\big[d(\pt_t S) \big]^{\mathbf{v}} dt + \big[d {h}\left(j^1S\right)\big]^{\mathbf{v}}\cdot\delta  X_t=0,\\
        & (\pt_t S)\mathcal{R} dt +  h\left(j^1S\right)\mathcal{R}\cdot\delta  X_t=0,
    \end{aligned}
    \right.
    \end{equation}
     %  \begin{align}\label{SHJE-1-0}
     %  d_t\big[d_qS\big]^{\mathbf v}+ \big[d_q {h}\left(j^1S_t\right)\big]^{\mathbf v}\cdot(dt, \circ d \xi_t)=0,
     % \end{align}
     where $[\cdot]^{\mathbf{v}}$ is the vertical lift operator. Equivalently, equation \eqref{SHJE-1-0} can be rewritten as
    \begin{align}\label{SHJE-1}
     \pt_t S dt + {h} \left(j^1S\right) \cdot \delta  X_t=0.
    \end{align}
\end{itemize} 
\end{theorem}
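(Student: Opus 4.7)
The strategy is to use the Stratonovich integral characterization \eqref{SJHE-integral}: $Z_t := j^1 S(q(t),t)$ solves \eqref{SJHE} iff, for every $\alpha\in\Omega^1(M)$,
\[
  \int \alpha(\delta Z_t) \;=\; \int \mathcal H^*(X_t,Z_t)(\alpha)(\delta X_t).
\]
I would first expand the LHS via the Stratonovich chain rule (the transfer principle). Writing $j^1 S_t:Q\to M$ as a time-dependent map and substituting the projected equation \eqref{Q-curve}, one obtains
\[
  \alpha(\delta Z_t) = \alpha(\partial_t j^1 S)|_{q(t)}\,dt + (j^1 S\circ\pi_Q)^*\alpha|_{Z_t}\bigl(V_{h\cdot\delta X_t}(Z_t)\bigr),
\]
while the RHS equals $\alpha|_{Z_t}(V_{h\cdot\delta X_t}(Z_t))$ by \eqref{dual-Stratonovich}. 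Their difference leads to the equivalent reformulation that, for every $\alpha$,
\[
  \alpha(\partial_t j^1 S)\,dt \;+\; \sum_k \tilde\beta_\alpha(V_{h_k})\,\delta X_t^k \;=\; 0, \qquad \tilde\beta_\alpha := (j^1 S\circ\pi_Q)^*\alpha - \alpha|_{j^1 S}.
\]

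Next, I would apply Lemma \ref{lemma-1} to translate this $M$-level identity into intrinsic conditions on $Q$. Writing $V_{h_k} = \Lambda^\sharp(dh_k) + h_k E$ with $E=-\mathcal R$, part (i) gives $\tilde\beta_\alpha(E) = \alpha(\mathcal R)|_{j^1 S}$, and part (ii) rewrites $\Lambda(\tilde\beta_\alpha,dh_k)$ as $d(h_k\circ j^1 S)(\alpha_{\mathbf h})$; hence $\tilde\beta_\alpha(V_{h_k})$ is a linear combination of $d(h_k\circ j^1 S)(\alpha_{\mathbf h})$ and $h_k(j^1 S)\,\alpha(\mathcal R)$. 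The Darboux expansion $\partial_t j^1 S = \partial_i\partial_t S\,\partial_{p_i} + \partial_t S\,\partial_u$ similarly splits $\alpha(\partial_t j^1 S)$ into a $d(\partial_t S)(\alpha_{\mathbf h})$-part and a $\partial_t S\,\alpha(\mathcal R)$-part. Because $\alpha$ ranges over all of $\Omega^1(M)$ and its horizontal projection $\alpha_{\mathbf h}\in\mathfrak{X}(Q)$ and Reeb pairing $\alpha(\mathcal R)$ can be freely specified along $j^1 S(Q)$ (as one sees by testing $\alpha=dq^i,\,dp_j,\,du$), the identity decouples into independent conditions on these two pieces.

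Matching the $dt$-coefficient and each $\delta X_t^k$-coefficient ($k\geq 1$), and then equating the horizontal and Reeb pieces within each, I would arrive at the scalar identities
\[
  h_k(j^1 S) = 0 \;\; (k\geq 1), \qquad \partial_t S + h_0(j^1 S) = 0,
\]
with the derivative conditions $d(h_k\circ j^1 S)=0$ and $d(\partial_t S+h_0\circ j^1 S)=0$ being automatic consequences. These are exactly the two lines of \eqref{SHJE-1-0}, and since $\mathcal R$ is nowhere vanishing they are equivalent to the scalar form \eqref{SHJE-1}. The reverse direction (ii)$\Rightarrow$(i) follows by reading this chain of equivalences backwards and invoking \cite[Definition (7.16)]{Emery1989}. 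The main obstacle I anticipate is the careful tracking of sign conventions when identifying $\tilde\beta_\alpha(V_{h_k})$ via Lemma \ref{lemma-1} (the signs of $\Lambda$, $E$, and the vertical lift $[\cdot]^{\mathbf v}$ must be reconciled with the Darboux formula for the contact Hamiltonian vector field, where each test $\alpha\in\{dq^i,dp_j,du\}$ produces internally consistent conditions); a secondary subtlety is making rigorous the claim that $(\alpha_{\mathbf h},\alpha(\mathcal R))$ are pointwise independent test data along $j^1 S(Q)$.
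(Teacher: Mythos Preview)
Your overall strategy matches the paper's proof closely: both start from the Stratonovich characterization \eqref{SJHE-integral}, expand $\alpha(\delta Z_t)$ via the chain rule for the time-dependent section $j^1 S_t$, invoke Lemma \ref{lemma-1} to handle the defect $\tilde\beta_\alpha = (j^1 S\circ\pi_Q)^*\alpha - \alpha|_{j^1 S}$, and then exploit the pointwise independence of the horizontal projection $\alpha_{\mathbf h}$ and the Reeb pairing $\alpha(\mathcal R)$ to decouple the identity into two pieces.

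However, your final step contains a genuine error. You claim that matching ``the $dt$-coefficient and each $\delta X_t^k$-coefficient'' separately yields $h_k(j^1 S)=0$ for $k\ge 1$ and $\partial_t S + h_0(j^1 S)=0$. This is neither what the theorem asserts nor correct. First, in the general setup there is no distinguished $h_0$: the driving semimartingale $X$ has components indexed $1,\dots,r$, and the $\partial_t S\,dt$ term arises purely from the explicit time dependence of the section $j^1 S_t$, not from a component of $X$. Second, and more fundamentally, you are not entitled to separate the stochastic differentials $dt$ and $\delta X_t^k$ as independent test directions. The function $S$ is a \emph{random} field (adapted to the filtration of $X$), so the symbol ``$\partial_t S\,dt$'' really stands for the Stratonovich differential $dS_t(q)$, which can and does absorb martingale contributions. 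The correct output of the argument is the single stochastic identity
\[
  \partial_t S\,dt + \sum_{k=1}^r h_k(j^1 S)\,\delta X_t^k = 0,
\]
which is exactly \eqref{SHJE-1}; compare the integrated local form \eqref{SHJE-1-local}, where all the $h_k$-terms survive as genuine Stratonovich integrals. The only legitimate decoupling is the one you correctly identified earlier --- splitting along $\alpha_{\mathbf h}$ versus $\alpha(\mathcal R)$ --- which produces the two lines of \eqref{SHJE-1-0}. Those two lines are the vertical-lift equation and the Reeb equation, each still mixing $dt$ and $\delta X_t$; they are not the component-wise vanishing you wrote down.
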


\begin{proof}
Denote $\Gamma_t := j^1S(q(t),t)=(dS(q(t),t),S(q(t),t))$, where $q(t)$ satisfies the stated equation \eqref{Q-curve}. Then, by the chain rule, for $\forall \alpha\in \Omega(J^1Q)$,
\begin{equation}\label{d-t-gamma}
\begin{split}
  \int \alpha (\delta \Gamma(t))
  =&\ \int \alpha \left( (j^1S)_\ast \delta q(t) + \pt_t (j^1S) dt \right) \\
  =&\ \int \alpha \left( (j^1S \circ \pi_Q)_\ast \mathcal{H}\big(X_t, \Gamma_t\big) \delta X_t \right) \\
  & + \int \rho_\ast \left( \alpha|_{\Gamma_t} \right) \left( \pt_t dS(q(t),t) \right) dt + \int (\pi_u)_\ast \left( \alpha|_{\Gamma_t} \right) \left( \pt_t S(q(t),t) \right) dt \\
  =&\ \underbrace{\int \mathcal{H}^\ast\big(X_t, \Gamma_t\big) (j^1S\circ \pi_Q)^\ast \alpha (\delta X_t)}_{=:I} \\
  & + \int  d (\pt_t S) \left( \alpha|_{\Gamma_t} \right)_\mathbf{h} (q(t),t) dt + \int \pt_t S (q(t),t) \alpha (\mathcal{R}) (\Gamma_t) dt.
\end{split}
\end{equation}
What we need to show is that \eqref{d-t-gamma} is equal to $\int \mathcal{H}^\ast\big(X_t,\Gamma_t\big)(\alpha) (\delta  X_t)$ if and only if statement ($ii$) holds. Now, it follows from \eqref{dual-Stratonovich} that
\begin{equation}\label{dual-Stratonovich-1}
  \int \mathcal{H}^\ast\big(X_t,\Gamma_t\big)(\alpha) (\delta X_t) = \int \Lambda(dh_k, \alpha) (\Gamma_t) \delta X_t^k - \int h_k(\Gamma_t) \alpha(\mathcal R) (\Gamma_t) \delta X_t^k,
\end{equation}
and
\begin{equation*}
  I = \int \Lambda\left( dh_k, (j^1S\circ \pi_Q)^\ast \alpha \right) (\Gamma_t) \delta X_t^k - \int h_k(\Gamma_t) (j^1S\circ \pi_Q)^\ast \alpha(\mathcal R) (\Gamma_t) \delta X_t^k.
\end{equation*}
By Lemma \ref{lemma-1}-(ii),
\begin{equation*}
  \int \Lambda\left( dh_k, (j^1S\circ \pi_Q)^\ast \alpha -\alpha \right) (\Gamma_t) \delta X_t^k = \int d(h_k(\Gamma_t)) (\alpha|_{\Gamma_t})_{\mathbf{h}} \delta X_t^k.
\end{equation*}
By Lemma \ref{lemma-1}-(i),
\begin{equation*}
  (j^1S\circ \pi_Q)^\ast \alpha(\mathcal R) = \alpha\left( (j^1S\circ \pi_Q)_\ast \mathcal R \right) = 0.
\end{equation*}
Thus,
\begin{equation*}
  I = \int \Lambda(dh_k, \alpha) (\Gamma_t) \delta X_t^k + \int d(h_k(\Gamma_t)) (\alpha|_{\Gamma_t})_{\mathbf{h}} \delta X_t^k.
\end{equation*}
Plugging this and \eqref{dual-Stratonovich-1} into \eqref{d-t-gamma}, we get
\begin{equation*}
  \begin{split}
    \int \alpha (\delta \Gamma(t)) =&\ \int \mathcal{H}^\ast\big(X_t,\Gamma_t\big)(\alpha) (\delta X_t) + \int \alpha(\mathcal R) (\Gamma_t) \left[ h_k(\Gamma_t) \delta X_t^k + \pt_t S (q(t),t) dt \right] \\
    &\ + \int d(h_k(\Gamma_t)) (\alpha|_{\Gamma_t})_{\mathbf{h}} \delta X_t^k + \int  d (\pt_t S) (q(t),t) \left( \alpha|_{\Gamma_t} \right)_\mathbf{h} dt.
  \end{split}
\end{equation*}
The arbitrariness of $\alpha$ and the independence of $\alpha(\mathcal R)$ and $(\alpha|_{\Gamma_t})_{\mathbf{h}}$ imply that $\int \alpha (\delta \Gamma(t)) = \int \mathcal{H}^\ast\big(X_t,\Gamma_t\big)(\alpha) (\delta X_t)$ if and only if
\begin{equation*}
  h_k(\Gamma_t) \delta X_t^k + \pt_t S (q(t),t) dt = 0,
\end{equation*}
and
\begin{equation*}
  d(h_k(\Gamma_t)) \delta X_t^k + d (\pt_t S) (q(t),t) dt = 0.
\end{equation*}
This yields the result.
\end{proof}

In particular, for the situation that $X_t = (t, B_t)$ with $B$ being a standard Brownian motion, this stochastic contact HJ equation has the following local expression:
\begin{align}\label{SHJE-1-local}
  S(q,t) =& S_0 (q)-\int_0^t h_0\bigg(q, \frac{\partial S_s(q)}{\partial q},S_s(q)\bigg)ds -\sum_{k=1}^r\int_0^t h_k\bigg(q, \frac{\partial S_s(q)}{\partial q},S_s(q)\bigg) \delta B_s^k.
\end{align}

\begin{remark}
The motivation of the formulation for Theorem \ref{SCJH-1} as well as the proof above comes from Abraham $\&$ Marsden \cite[Theorems 5.2.4 and 5.2.18]{Marsden1978}. Notice that the proof also works for the case that $h=(h_0,h_1,\cdots,h_r):M\times \mathbb{R}_+\to \mathbb{R}\times \R^r$ dependent on $t$ explicitly, and still works in infinite dimensions which means Theorem \ref{SCJH-1} may be applied to quantization problems for field theories.
 \end{remark}
  \begin{remark} 
  Referring to de Le\'on $\&$ Sard\'on \cite{deLeon2017} and de Le\'on, Lainz  $\&$  Mu\~niz-Brea \cite{deLeon2021contact}, we also give a coordinate-based proof of Theorem \ref{SCJH-1}. In Darboux coordinates $Z=(q^i,p_i,u)$, by Lemma \ref{1-jet}, we can write the section $\Gamma:Q\times \mathbb{R}_+\to J^1Q\times \mathbb{R}_+$ (which can be understood as a curve) as $\Gamma=j^1S$, that is,
     \begin{align}\label{local-Gamma}
    \Gamma(q^i,t)=\big(q^i, \Gamma_j(q,t), \Gamma_u(q,t),t\big)
    = \left(q^i, \frac{\partial S(q,t)}{\partial q^j}, S(q,t),t\right),
    \end{align}
    where $\Gamma_j := p_j \circ \Gamma$ and $\Gamma_u := u \circ \Gamma$.
    %And $\Gamma_t(Q)$ is a Legendrian submanifold of $(T^\ast Q\times \mathbb{R},\eta)$ and $\rho\circ\Gamma_t(Q)$ is a Lagrangian submanifold of $(T^\ast Q,\omega_Q)$ for each fixed $t$. 
    Note that the local expression of the random Hamiltonian vector field is
    \begin{align}\label{local-X-H}
    V_{h \cdot\delta X} = \mathcal{H}(X,Z) (\delta X)
    =&\ \sum_{k=1}^d \frac{\partial h_k}{\partial p_i} \delta X^k \frac{\partial}{\partial q^i} \notag\\
    &\ - \sum_{k=1}^d\left(\frac{\partial h_k}{\partial q^i}+p_i\frac{\partial h_k}{\partial u}\right) \delta X^k \frac{\partial}{\partial p_i}\notag\\
    &\ + \sum_{k=1}^d  \left(p_i\frac{\partial h_k}{\partial p_i}-h_k\right) \delta X^k \frac{\partial}{\partial u},
    \end{align}
    and the vector field $V_{h \cdot\delta X_t}^{\Gamma}:Q\to TQ$ can be defined by
    \begin{align}\label{local-X-H-Gamma}
    V_{h \cdot\delta X_t}^{\Gamma}=T\pi_Q \circ V_{h \cdot\delta X_t} \circ {\Gamma_t}= \sum_{k=1}^d\bigg( \frac{\partial h_k}{\partial p_i}\circ {\Gamma_t} \bigg) \delta X^k_t \frac{\partial}{\partial q^i}.
    \end{align}
    Taking the time variable into account, we may introduce the corresponding time-dependent vector fields as 
    $$
    \widetilde{V}_{h \cdot\delta X_t}=V_{h \cdot\delta X_t}+\frac{\partial}{\partial t}\quad \;\text{and}\quad \;\widetilde{V}_{h \cdot\delta X_t}^{\Gamma}=V_{h \cdot\delta X_t}^{\Gamma}+\frac{\partial}{\partial t}.
    $$
    Then, the two statements in Theorem \ref{SCJH-1} become
    \begin{itemize}
      \item[($i$)] The two vector fields $\widetilde{V}_{h \cdot\delta X_t}$ and $\widetilde{V}_{h \cdot\delta X_t}^{\Gamma}$ are $\Gamma$-related, i.e., 
        \begin{align}\label{Gamma-related-t}
        \widetilde{V}_{h \cdot\delta X_t} \circ \Gamma = T\Gamma \circ \widetilde{V}_{h \cdot\delta X_t}^\Gamma;
        \end{align}
        \item[($ii$)] The following equation is fulfilled
    \begin{align}\label{SHJE-1-0-local}
     \left[\partial_t (\rho\circ \Gamma) \right]^\mathbf{v} dt + \left[ d(h \circ \Gamma_t) \right]^\mathbf{v} \cdot \delta  X_t= 0.
    \end{align}
    \end{itemize}
Here $\partial_t (\rho\circ \Gamma)$ is the tangent vector at a point $q$  associated with the curve; see the following commutative diagram:
    $$
    \centering
    \begin{tikzcd}[column sep=scriptsize, row sep=scriptsize]
  J^1Q\times \mathbb{R}_+
  \arrow[ddrr, "\pi_+"]
  \arrow[dd, "{\pi_Q\times id}"] & & \\
  && &\\
   Q \times \mathbb{R}_+ 
   \arrow[uu, bend left=35, "\Gamma"]
   \arrow[rr, swap, ""]
   & & 
   \mathbb{R}_+.
   \arrow[uull, swap, bend left=-30, "(\rho\circ\Gamma)"]
      %\\ {(a)} && 
\end{tikzcd}
$$   

We now verify the equivalence of (i) and (ii) by local coordinates. On the one hand, it is a matter of calculation to check that, by
$$
T{\Gamma}\bigg( \frac{\partial}{\partial t}\bigg)
   =\frac{\partial {\Gamma}_j}{\partial t} \frac{\partial}{\partial p^j}+ \frac{\partial \Gamma_u}{\partial t} \frac{\partial}{\partial u}+\frac{\partial}{\partial t}
$$
and
$$%\quad \;\text{and}\quad \;
T{\Gamma}\bigg( \frac{\partial}{\partial q^i}\bigg)
   =\frac{\partial}{\partial q^i}+ \frac{\partial {\Gamma}_j}{\partial q^i} \frac{\partial}{\partial p^j}+ \frac{\partial \Gamma_u}{\partial q^i} \frac{\partial}{\partial u},
$$
% \begin{align}%\label{Q-curve}
%    T{\Gamma}\bigg( \frac{\partial}{\partial q^i}\bigg)
%    =\frac{\partial}{\partial q^i}+ \frac{\partial {\Gamma}_j}{\partial q^i} \frac{\partial}{\partial p^j}+ \frac{\partial \Gamma_u}{\partial q^i} \frac{\partial}{\partial u}
%    %=\frac{\partial}{\partial q^i}+ \frac{\partial^2 S_t}{\partial q^i \partial q^j} \frac{\partial}{\partial p^j}+ \frac{\partial S_t}{\partial q^i} \frac{\partial}{\partial u}.\notag
%     \end{align}
equation \eqref{Gamma-related-t} holds if and only if
  \begin{equation}   
  \label{eq:hj} 
\left\{ 
\begin{aligned} 
    -\left[\left(\frac{\partial h}{\partial q^j}+\Gamma_j \frac{\partial h}{\partial u}\right)\cdot \delta X_t \right]\frac{\partial}{\partial p_j}
    &= \left[\frac{\partial \Gamma_j}{\partial t}dt +\left( \frac{\partial h}{\partial p_i}\cdot \delta X_t \right)\frac{\partial \Gamma_j}{\partial q^i}\right]\frac{\partial}{\partial p_j},
\cr 
\left[\left(\Gamma_j\frac{\partial h}{\partial p_j}-h\right)\cdot \delta X_t \right]\frac{\partial}{\partial u}
    &=\left[\frac{\partial \Gamma_u}{\partial t}dt +\left(\frac{\partial h}{\partial p_j}\cdot \delta X_t \right)  \frac{\partial \Gamma_u} {\partial q^j} \right]\frac{\partial}{\partial u}.
\end{aligned} 
\right. 
\end{equation}
On the other hand, in coordinates, one has
$$
[\pt_t (\rho\circ \Gamma)]^\mathbf v =\left(\frac{\partial \Gamma_j}{\partial t}dq^j\right)^\mathbf v =-\frac{\partial \Gamma_j}{\partial t}\frac{\partial}{\partial p_j}
$$
and
$$
[d(h \circ \Gamma_t)]^\mathbf{v} \cdot\delta X_t = -\left[\frac{\partial h_k}{\partial q^i} + \frac{\partial h_k}{\partial p_j}\frac{\partial \Gamma_j}{\partial q^i}+\frac{\partial h_k}{\partial u}\frac{\partial \Gamma_u}{\partial q^i}\right] \delta X_t^k \frac{\partial}{\partial p_i}.
$$
 We thus can find that \eqref{Gamma-related-t} or \eqref{eq:hj} is equivalent to \eqref{SHJE-1-0-local} if and only if $\Gamma=j^1S=(dS,S)$ is the 1-jet of the function $S$, that is, $\Gamma$ satisfies \eqref{local-Gamma} with
     $$
    \Gamma_j=\frac{\partial \Gamma_u} {\partial q^j}\quad \;\text{and}\quad \;\Gamma_u=S.
    $$
Equation \eqref{SHJE-1-0-local} is referred to as a stochastic Hamilton--Jacobi equation with respect to sections. We can also rewrite it as
\begin{align}\label{SHJE-1-local-0}
    \pt_t S dt + h \circ (dS_t,S_t) \cdot \delta  X_t = 0.
\end{align}
\end{remark}

\subsubsection{An alternative formalism on $Q\times \mathbb{R}$}

    We notice that equation \eqref{SHJE-1} or \eqref{SHJE-1-local} are similar to the symplectic case; see \cite[Theorem 3.4 or Example 1]{LazaroCam2009}. One reason behind this is that the HJB formalisms above are considered on the configuration space $Q$. However, we may have to consider the problem on the space $Q\times \mathbb{R}$ when the $(2n+1)^{st}$-dimensional variable can not be expressed by the configuration variable \cite{deLeon2017,deLeon2021contact}. Next, we introduce an alternative approach to ``reduce" the Hamiltonian formulation to an HJB one. That is, instead of considering sections of $\pi_Q: T^\ast Q\times \mathbb{R}\to Q$, we next focus on a section of the projection $\pi_{\widehat{Q}}: T^\ast Q\times \mathbb{R}\to Q\times \mathbb{R}$, say $\widehat{\Gamma}_t:Q\times \mathbb{R}\to T^\ast Q\times \mathbb{R}$. See the commutative diagram below:
%\begin{figure}[h]
%    \centering
$$
     \begin{tikzcd}[column sep=scriptsize, row sep=scriptsize]
  T^{\ast}Q\times \mathbb{R}
  \arrow[rr, "V_{h \cdot\delta X_t}"]
  \arrow[dd, "\pi_{\widehat{Q}}"] & & T(T^{\ast}Q\times \mathbb{R})
  \arrow[dd, "T\pi_{\widehat{Q}}"] \\
  && &\\
   \widehat{Q}\triangleq Q\times \mathbb{R}  
   \arrow[rr, "V_{h \cdot\delta X_t}^{\widehat{\Gamma}}"]\arrow[uu, bend left=50, "\widehat{\Gamma}_t"]
   & 
      & T(Q\times \mathbb{R}).
\end{tikzcd}
$$
%\end{figure}

\begin{theorem} (Stochastic contact HJ formalism II) \label{SCHJT-2}
Let $M=T^\ast Q\times \mathbb{R}$ with the canonical contact structure $\eta$. Consider the stochastic Hamiltonian system given in \eqref{SJHE}. Set $d_q f(Q\times \mathbb{R})$ be a coisotropic submanifold of $(M,\eta)$ and $(\rho \circ d_q f)(Q)$ be a Lagrangian submanifold of $(T^{\ast}Q,\omega_Q)$, for some $f\in C^\infty(Q\times \mathbb{R})$.  Let $\widehat{S}:(Q\times \mathbb{R})\times [0,{\tau}_{\widehat{Q}}) \to \mathbb{R}$ with $\widehat{S}_0(q,u)=f(q,u)$ and ${\tau}_{\widehat{Q}}$ the maximal stopping time of \eqref{Q-curve2}. Then, for each $t\in [0,{\tau}_{\widehat{Q}})$, the following statements are equivalent:
\begin{itemize}
    \item[($i$)] for every semimartingale $\hat{q}(t)$ in $\widehat{Q}=Q\times \mathbb{R}$ satisfying
    \begin{align}\label{Q-curve2}
    d \hat{q}(t) = \big( \pi_{\widehat{Q}\ast} \big) \mathcal{H}\big(X_t ,d_q\widehat{S}(\hat{q}(t),t)\big) \delta  X_t,
    \end{align}
    the process $(\omega,t)\mapsto d_q\widehat{S}(\hat{q}(t),t)$ is a Hamiltonian semimartingale solving \eqref{SJHE}. 
    \item[($ii$)] $\widehat{S}$ satisfies the stochastic contact HJ equation
     \begin{align}\label{SHJE2-1}
    \left[\pt_t d_q\widehat{S} dt- h\big(d_q\widehat{S}\big)\mathcal{R}\big(d_q\widehat{S}\big) \cdot \delta  X_t\right]^\mathbf{v} + \left[d \left( h\big(d_q\widehat{S}\big) \right) \cdot \delta {X_t} \right]^{\hat{\mathbf{v}}}=0,
    \end{align}
    $$
    \left[\pt_t (\rho \circ (d_q\widehat{S})) dt\right]^{\mathbf v} +\left[d(h\circ (d_q\widehat{S}))\right]^{\hat{\mathbf v}} \cdot \delta X_t  = \left[ (h\circ \widehat{\Gamma})\left(\iota_{\mathcal{R}}(d( (d_q\widehat{S})^\ast\theta_Q))\right)\right]^{\mathbf v} \cdot \delta X_t,
    $$
    where the operator $[\cdot]^{\hat{\mathbf{v}}}$ is defined by \eqref{VL}. In Darboux coordinates, equation \eqref{SHJE2-1} can be rewritten as
     \begin{align}\label{SHJE2-2}
     \frac{\partial \widehat{S}}{\partial q^i}
     =\frac{\partial \widehat{S}_0}{\partial q^i}-\int_0^t \Big[
     \frac{\partial h}{\partial q^i}+\frac{\partial h}{\partial p_j}\frac{\partial^2 \widehat{S}}{\partial q^i\partial q^j}+\Big( \frac{\partial h}{\partial p_j}\frac{\partial^2 \widehat{S}}{\partial q^i \partial u}+\frac{\partial h}{\partial u}\Big)\frac{\partial \widehat{S}}{\partial q^i}-h\frac{\partial^2 \widehat{S}}{\partial q^i \partial u}
     \Big]\delta  X_t
     \end{align}
with $h=h\big(q,\frac{\partial \widehat{S}}{\partial q}(q,u,t),u\big)$.
\end{itemize} 
\end{theorem}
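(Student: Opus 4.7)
The approach mirrors that of Theorem \ref{SCJH-1}, but with the configuration base $Q$ replaced by $\widehat{Q} = Q \times \mathbb{R}$ and the jet section $j^1 S$ replaced by the partial differential $d_q \widehat{S}$ viewed as a section of $\pi_{\widehat{Q}}: T^*Q \times \mathbb{R} \to \widehat{Q}$. In Darboux coordinates, $\widehat{\Gamma}_t(q,u) = (q^i, \partial\widehat{S}/\partial q^i(q,u,t), u)$, so its image is coisotropic in $(M,\eta)$ (with the $u$-direction singled out), and the $q$-projected image $(\rho \circ \widehat{\Gamma}_t)(Q)$ is Lagrangian in $(T^*Q, \omega_Q)$. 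This geometric setup is exactly the one assumed on $d_q f$ at $t = 0$; preservation of this property along the flow follows from Theorem \ref{characteristic foliation}.

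First I would establish a partial analogue of Lemma \ref{lemma-1} for the composition $d_q\widehat{S} \circ \pi_{\widehat{Q}} : M \to M$. The crucial new feature, compared with $j^1 S \circ \pi_Q$, is that $(d_q \widehat{S} \circ \pi_{\widehat{Q}})_* \mathcal{R}$ does not vanish: in local coordinates it contributes a $\partial/\partial p_j$-term determined by $\partial^2 \widehat{S}/\partial q^j \partial u$. This term is exactly the vertical component of $\iota_{\mathcal{R}}(d((d_q \widehat{S})^* \theta_Q))$ and is precisely what produces the extra Reeb contribution on the right-hand side of \eqref{SHJE2-1}. A parallel identity for $\Lambda((d_q\widehat{S} \circ \pi_{\widehat{Q}})^* \alpha - \alpha|_{d_q\widehat{S}}, df)$ would isolate the $\widehat{Q}$-horizontal component and match the operator $[\cdot]^{\hat{\mathbf v}}$ appearing in \eqref{SHJE2-1}.

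Next, set $\Gamma(t) := d_q \widehat{S}(\hat{q}(t), t)$ for $\hat{q}(t)$ solving \eqref{Q-curve2} and, for any 1-form $\alpha$ on $M$, apply the Stratonovich chain rule to obtain
\begin{equation*}
\int \alpha(\delta \Gamma(t)) = \int \alpha\bigl((d_q\widehat{S})_* \delta \hat{q}(t)\bigr) + \int \alpha\bigl(\partial_t(d_q\widehat{S})(\hat{q}(t),t)\bigr)\, dt.
\end{equation*}
Using \eqref{Q-curve2} together with the formula \eqref{dual-Stratonovich} for $\mathcal{H}^*$, I would rewrite the first integral in terms of $(d_q \widehat{S}\circ \pi_{\widehat{Q}})^*\alpha$ and then apply the analogue of Lemma \ref{lemma-1}-(ii) to split it into $\int \mathcal{H}^*(X_t, \Gamma_t)(\alpha)(\delta X_t)$ plus correction terms involving the horizontal projection and the Reeb pushforward. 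Demanding equality with $\int \mathcal{H}^*(X_t, \Gamma_t)(\alpha)(\delta X_t)$ — which is exactly the requirement that $\Gamma(t)$ solve \eqref{SJHE} — and invoking the arbitrariness of $\alpha$ separately on its Reeb and $\widehat{Q}$-horizontal components yields the two-part identity \eqref{SHJE2-1}.

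The main obstacle is the non-trivial Reeb bookkeeping caused by $u$ being a genuine dynamical variable rather than, as for $j^1 S$, being slaved to $S$ itself. In the setting of Theorem \ref{SCJH-1} the identity $(j^1 S\circ\pi_Q)_*\mathcal{R} = 0$ produced a clean split; here the extra term involving $h(\iota_{\mathcal{R}}(d((d_q\widehat{S})^*\theta_Q)))$ on the right-hand side of \eqref{SHJE2-1} must be tracked carefully and shown to be consistent precisely under the coisotropic (rather than Legendrian) hypothesis. Once \eqref{SHJE2-1} is established, the Darboux form \eqref{SHJE2-2} follows by substituting the local expression of $\widehat{\Gamma}$, the vector field \eqref{local-X-H}, and the projected vector field \eqref{local-X-H-Gamma} into the $\widehat{\Gamma}$-relatedness condition, matching $\partial/\partial p_j$-components, and integrating the resulting Stratonovich SDE in $t$.
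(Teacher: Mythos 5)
Your outline is viable, but it takes a genuinely different route from the paper. The paper proves Theorem \ref{SCHJT-2} almost entirely in Darboux coordinates: it writes the section as in \eqref{local-Gamma-2}, recasts statement ($i$) as the $\widehat{\Gamma}$-relatedness condition \eqref{Gamma-related-2}, computes $T\widehat{\Gamma}\big(\widetilde{V}_{h\cdot\delta X_t}^{\widehat{\Gamma}}\big)$ as in \eqref{TG} to reduce relatedness to the single vertical identity \eqref{SHJ2-form}, then uses the local characterizations \eqref{TG2}--\eqref{TG3} of the coisotropic and Lagrangian hypotheses (cf.\ Lemma \ref{coisotropic-cond}) to turn \eqref{SHJ2-form} into \eqref{SHJ2-local}, i.e.\ \eqref{SHJE2-2}, and only afterwards repackages the result globally via the modified vertical lift \eqref{VL}. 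You instead propose to reproduce the intrinsic argument of Theorem \ref{SCJH-1}: an analogue of Lemma \ref{lemma-1} for $d_q\widehat{S}\circ\pi_{\widehat{Q}}$, the Stratonovich chain rule applied to $\Gamma_t=d_q\widehat{S}(\hat q(t),t)$, and a splitting by the arbitrariness of $\alpha$. Your diagnosis of the new feature is exactly right: $(d_q\widehat{S}\circ\pi_{\widehat{Q}})_*\mathcal{R}-\mathcal{R}$ is the vertical vector with components $\partial^2\widehat{S}/\partial q^j\partial u$, which is $\iota_{\mathcal{R}}\big(d(\widehat{\Gamma}^\ast\theta_Q)\big)$ up to the sign convention of the vertical lift, and it is precisely the source of the $h\,\partial^2\widehat{S}/\partial q^i\partial u$ term in \eqref{SHJE2-2}; your last paragraph (matching $\partial/\partial p_j$-components of the relatedness condition and integrating in $t$) is in fact exactly what the paper does, so your intrinsic lemma, once stated and proved, buys a coordinate-free derivation of \eqref{SHJE2-1} that the paper only writes down after the coordinate computation.

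Two points in your sketch need correcting or making precise if you carry it out. First, the final splitting is not by Reeb versus $\widehat{Q}$-horizontal components of $\alpha$ as in Theorem \ref{SCJH-1}: since the section $d_q\widehat{S}$ prescribes only the fibre coordinates $p_j$ (while $u$ is now a base variable), the discrepancy between $\delta\Gamma_t$ and $\mathcal{H}(X_t,\Gamma_t)\,\delta X_t$ is purely vertical --- the $q$- and $u$-components match identically, as visible in \eqref{TG} --- so arbitrariness of $\alpha$ yields a single vertical equation (the analogue of \eqref{SHJ2-form}), and there is no separate scalar Reeb equation of the type $\partial_t S\,dt + h\cdot\delta X_t=0$; equation \eqref{SHJE2-1} is that one vertical identity, not a two-part system. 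Second, passing from the raw vertical relation to the stated form \eqref{SHJE2-1}/\eqref{SHJE2-2} requires the coisotropic condition \eqref{TG2} and the Lagrangian condition \eqref{TG3} for $\widehat{\Gamma}_t$ at every $t$, not merely at $t=0$; you gesture at this via preservation under the flow and Theorem \ref{characteristic foliation}, which is a reasonable justification (and more than the paper offers, since it simply invokes the conditions at each $t$ ``under our assumptions''), but it should be spelled out rather than left implicit.
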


\begin{proof}
Note that the formalisms in this case are more complex than those in Theorem \ref{SCJH-1}, for convenience and readability, we are only going to discuss the problem in Darboux coordinates, and then write down the results in a more global way. 

In Darboux coordinates, we define the section $\widehat{\Gamma}:(Q\times \mathbb{R})\times \mathbb{R}_+\to (T^\ast Q\times \mathbb{R})\times \mathbb{R}_+$ by $\widehat{\Gamma}=d_q \widehat{S}$, that is,
\begin{align}\label{local-Gamma-2}
  \widehat{\Gamma}(q^i,u,t)=\big(q^i,\widehat{\Gamma}_j(q,u,t),u,t\big)=\left(q^i,\frac{\partial \widehat{S}}{\partial q^j}(q,u,t),u,t\right).
\end{align}
Note that the right-hand side of \eqref{Q-curve2} is indeed $V_{h \cdot\delta X_t}^{\widehat{\Gamma}}=T\pi_{\widehat{Q}}\circ V_{h \cdot\delta X_t}\circ \widehat{\Gamma}_t$ for each fixed $t$, with $V_{h \cdot\delta X_t}$ being the random Hamiltonian vector field of \eqref{SJHE3} and locally given in \eqref{local-X-H}. By introducing the time-dependent vector fields $\widetilde{V}_{h \cdot\delta X_t}=V_{h \cdot\delta X_t}+\frac{\partial}{\partial t}$ and $\widetilde{V}_{h \cdot\delta X_t}^{\widehat{\Gamma}}=V_{h \cdot\delta X_t}^{\widehat{\Gamma}}+\frac{\partial}{\partial t}$, the conclusion (1) means that 
\begin{align}\label{Gamma-related-2}
\widetilde{V}_{h \cdot\delta X_t}\circ \widehat{\Gamma}=T\widehat{\Gamma} \circ \widetilde{V}_{h \cdot\delta X_t}^{\widehat{\Gamma}}.
\end{align}
  We calculate that
\begin{align}\label{TG}
&T\widehat{\Gamma}(\widetilde{X}_{h \cdot\delta X_t}^{\widehat{\Gamma}})
=\frac{\partial h}{\partial p_i} \delta  X_t T\widehat{\Gamma}\left(\frac{\partial}{\partial q^i}\right)
+\left(p_i\frac{\partial h}{\partial p_i} \delta X_t -h \cdot\delta X_t\right)T\widehat{\Gamma}\left(\frac{\partial}{\partial u}\right)+T\widehat{\Gamma}\left(\frac{\partial}{\partial t}\right)
\notag\\
=&\left( \frac{\partial h}{\partial p_i}\delta X_t \right)\frac{\partial}{\partial q^i}
+\Bigg\{\frac{\partial \widehat{\Gamma}_j}{\partial t}+\left[\frac{\partial h}{\partial p_i}\frac{\partial \widehat{\Gamma}_j}{\partial q^i} +\left(\widehat{\Gamma}_i\frac{\partial h}{\partial p_i}-h\right)\frac{\partial \widehat{\Gamma}_j}{\partial u} \right]\delta X_t \Bigg\}\frac{\partial}{\partial p_j}
\notag\\
&+\left[\left(\widehat{\Gamma}_i\frac{\partial h}{\partial p_i}-h\right)\delta X_t  \right]\frac{\partial}{\partial u}+\frac{\partial}{\partial t}.
\end{align}
Hence, the $\widehat{\Gamma}$-related formula \eqref{Gamma-related-2} holds, if and only if,
$$
-\left[\left(\frac{\partial h}{\partial q^j}+\widehat{\Gamma}_j\frac{\partial h}{\partial u}\right)\cdot\delta X_t\right]\frac{\partial}{\partial p_j}
    = \Bigg\{\frac{\partial \widehat{\Gamma}_j}{\partial t}+\left[\frac{\partial h}{\partial p_i}\frac{\partial \widehat{\Gamma}_j}{\partial q^i} +\left(\widehat{\Gamma}_i\frac{\partial h}{\partial p_i}-h\right)\frac{\partial \widehat{\Gamma}_j}{\partial u} \right]\cdot\delta X_t \Bigg\}\frac{\partial}{\partial p_j},
$$
that is,
\begin{align}\label{SHJ2-form}
\Bigg\{\frac{\partial \widehat{\Gamma}_j}{\partial t}+\left[\frac{\partial h}{\partial q^j}+\frac{\partial h}{\partial p_i}   \frac{\partial \widehat{\Gamma}_j}{\partial q^i}+\widehat{\Gamma}_j\frac{\partial h}{\partial u}+\left( \widehat{\Gamma}_i\frac{\partial h_0}{\partial p_i}-h\right)\frac{\partial \widehat{\Gamma}_j}{\partial u} \right]\cdot\delta X_t \Bigg\}\frac{\partial}{\partial p_j}=0.
\end{align}
On the other hand, under our assumptions, $\widehat{\Gamma}_t(Q\times \mathbb{R})$ is a coisotropic submanifold and it is foliated by Lagrangian leaves $\rho \circ \widehat{\Gamma}_t(Q,u)$ for each $t\in[0,\hat{\tau})$ and $u\in \mathbb{R}$. Referring to \cite[Proposition 4]{deLeon2019JMP} (see also \cite{deLeon2021contact}), the submanifold $\widehat{\Gamma}_t(Q\times \mathbb{R})$ can be given locally by the zero set of functions $\phi_a:U \to \mathbb{R}$, with $a=1,\cdots, 2n+1-\mathrm{dim}(\widehat{\Gamma}_t(Q\times \mathbb{R}))$, and it is coisotropic if and only if $\Lambda^\sharp(d\phi_a)(\phi_b)=0$ for all $a,b$. In our case, $\widehat{\Gamma}(Q\times \mathbb{R})$ is locally defined by the functions 
\begin{align}
\phi_j=p_j-\widehat{\Gamma}_j=0, \quad j=1,\cdots,n.
\end{align}
The coisotropic condition is thus equivalent to
\begin{align}\label{TG2}
%\widehat{\Gamma}(Q\times \mathbb{R},t)=\left\{(q,p,u,t): 
\frac{\partial \widehat{\Gamma}_i}{\partial q^j}-\widehat{\Gamma}_j \frac{\partial \widehat{\Gamma}_i}{\partial u}- \frac{\partial \widehat{\Gamma}_j}{\partial q^i}+ \widehat{\Gamma}_i \frac{\partial \widehat{\Gamma}_j}{\partial u}=0.
%\right\},
\end{align}
Furthermore, the condition that $\rho \circ \widehat{\Gamma}(Q,u)$ is a Lagrangian  submanifold for any fixed $u\in\mathbb{R}$ yields:
\begin{align}\label{TG3}
%\rho \circ \widehat{\Gamma}(Q,u,t)=\left\{(q,p,t): 
\frac{\partial \widehat{\Gamma}_i}{\partial q^j}- \frac{\partial \widehat{\Gamma}_j}{\partial q^i}=0.
%\right\}.
\end{align}
As a result, applying \eqref{TG2} and \eqref{TG3}, equation \eqref{SHJ2-form} becomes
\begin{align}\label{SHJ2-local}
\Bigg\{ \frac{\partial \widehat{\Gamma}_j}{\partial t}+\left[\frac{\partial h}{\partial q^j}+\frac{\partial h}{\partial p_i}   \frac{\partial \widehat{\Gamma}_i}{\partial q^j}+\widehat{\Gamma}_j\left(  \frac{\partial h}{\partial p_i}  \frac{\partial \widehat{\Gamma}_i}{\partial u} +\frac{\partial h}{\partial u}\right)-h \frac{\partial \widehat{\Gamma}_j}{\partial u} \right]\cdot\delta X_t \Bigg\}\frac{\partial}{\partial p_j}
 =0,
\end{align}
which is indistinctly referred to as an HJ equation with respect to the section $\widehat{\Gamma}$. 
% As $\widehat{\Gamma}=d_Q \widehat{S}$, we can also rewrite it as
% \begin{align}\label{SHJ2-local-S}
%  \frac{\partial^2 \widehat{S}}{\partial t\partial q^j}+\left[\frac{\partial h}{\partial q^i}+\frac{\partial h}{\partial p_j}\frac{\partial^2 \widehat{S}}{\partial q^i\partial q^j}+\Big( \frac{\partial h}{\partial p_j}\frac{\partial^2 \widehat{S}}{\partial q^i \partial u}+\frac{\partial h}{\partial u}\Big)\frac{\partial \widehat{S}}{\partial q^i}-h\frac{\partial^2 \widehat{S}}{\partial q^i \partial u} \right]\delta X_t 
%  =0.
% \end{align}

In a more global language, we define
\begin{align}\label{VL}
  \alpha^{\hat{\mathbf v}} = \left( (\pi_Q)_* \alpha \right)^{\mathbf v} -\alpha(\mathcal{R}) \Lambda_Q,
\end{align}
for $\alpha\in \Omega^1(M)$, where $\alpha^{\mathbf v}$ stands for the traditional vertical lift, $\Lambda_Q =(\widehat{\Gamma}^\ast\theta_Q)^{\mathbf v}$ is the Liouville vector field, and $\mathcal{R}=\frac{\partial}{\partial u}$ is the Reeb vector field. Note that
\begin{align}
    d(h\circ \widehat{\Gamma})=\left(\frac{\partial h}{\partial q^j}+\frac{\partial h}{\partial p_i}   \frac{\partial \widehat{\Gamma}_i}{\partial q^j}\right)dq^j+\left(  \frac{\partial h}{\partial p_i}  \frac{\partial \widehat{\Gamma}_i}{\partial u} +\frac{\partial h}{\partial u}\right)du.
\end{align}
Therefore, keeping the coisotropic condition in mind, we can rewrite \eqref{SHJ2-local} into
\begin{align}
  \left[\pt_t (\rho \circ \widehat{\Gamma}) dt - (h\circ \widehat{\Gamma})\left(\iota_{\mathcal{R}}(d( \widehat{\Gamma}^\ast\theta_Q))\right) \cdot \delta X_t\right]^{\mathbf v} +\left[d(h\circ \widehat{\Gamma}) \cdot \delta X_t\right]^{\hat{\mathbf v}} = 0,
\end{align}

% \begin{align}
%   \left[ \red{\pt_t (\widehat{\rho} \circ \widehat{\Gamma}) dt} - (h\circ \widehat{\Gamma})\frac{\partial (\widehat{\rho} \circ \widehat{\Gamma})}{\partial u}\delta X_t\right]^{\mathbf v} +\left[d(h\circ \widehat{\Gamma})\delta X_t \right]^{\hat{\mathbf v}} = 0,
% \end{align}
% or, in the randomized form,
% \begin{align}
%     \left[\dot{\widehat{\Gamma}}_q-(h \cdot\delta X_t\circ \widehat{\Gamma})\mathcal{R}(\widehat{\Gamma}_q)\right]^{\mathbf v}
%     +\left[d(h \cdot\delta X_t \circ \widehat{\Gamma})\right]^{\widehat{V}}=0,
% \end{align}
where $\rho \circ \widehat{\Gamma}=\widehat{\Gamma}_jdq^j$ satisfies the following commutative diagram:
    $$
    \centering
    \begin{tikzcd}[column sep=scriptsize, row sep=scriptsize]
  (T^\ast Q\times \mathbb{R}) \times \mathbb{R}_+
  \arrow[ddrr, "{id_{\mathbb{R}}\times\pi_+}"]
  \arrow[dd, "{\pi_{\widehat{Q}}\times id_+}"] & & \\
  && &\\
   (Q \times \mathbb{R})\times \mathbb{R}_+ 
   \arrow[uu, bend left=35, "\widehat{\Gamma}"]
   \arrow[rr, swap, ""]
   & & 
   \mathbb{R}\times \mathbb{R}_+,
   \arrow[uull, swap, bend left=-40, "(\rho \circ \widehat{\Gamma})"]
      %\\ {(a)} && 
\end{tikzcd}
$$
and $\mathcal{R}(\rho \circ \widehat{\Gamma})=\iota_\mathcal{R}(d(\rho \circ \widehat{\Gamma}))$. The results follow by applying $\widehat{\Gamma}=d_q \widehat{S}$. 
\end{proof}

Note that the characterization of conditions on the submanifolds $\widehat{\Gamma}_t(Q\times \mathbb{R})$ and $\rho \circ \widehat{\Gamma}_t(Q,u)$ are given in \eqref{TG2} and \eqref{TG3}. The assumptions of Theorem \ref{SCHJT-2} on coisotropic submanifold and Lagrangian submanifold can be replaced by \eqref{Conditions-coisotropic} based on the following lemma.
\begin{lemma}\label{coisotropic-cond} (\cite[Theorem 2]{deLeon2021contact}) Let $\widehat{\Gamma}$ be a section of $T^\ast Q\times \mathbb{R}$ over $Q\times \mathbb{R}$. Then, $\widehat{\Gamma}(Q\times \mathbb{R})$ is a coisotropic submanifold, and $\rho \circ \widehat{\Gamma}(Q,u)$ is a Lagrangian submanifold for all $u$, if and only if,  
\begin{equation}
    d_q\widehat{\Gamma}=0 
    \text{ and } \mathscr{L}_{\mathcal{R}}\widehat{\Gamma}=\sigma\widehat{\Gamma},
\end{equation}
for some function $\sigma:Q\times \mathbb{R}\to \mathbb{R}$. That is, there exists locally a function $f:Q\times \mathbb{R}\to \mathbb{R}$ such that
\begin{equation}\label{Conditions-coisotropic}
    d_qf=\widehat{\Gamma}
    \text{ and } 
    d_q\mathcal{R}(f)=\sigma d_qf.
\end{equation}
\end{lemma}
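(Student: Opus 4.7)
My plan is to work throughout in Darboux coordinates $(q^i, p_i, u)$ on $T^\ast Q \times \mathbb{R}$, identifying the section $\widehat{\Gamma}(q,u) = (q,\widehat{\Gamma}_j(q,u),u)$ with the $u$-parametrized $1$-form $\widehat{\Gamma}_j(q,u)\,dq^j$ on $Q$, so that the two claimed intrinsic conditions can be matched directly against the PDE systems already extracted in the proof of Theorem \ref{SCHJT-2}. From that proof, $\widehat{\Gamma}(Q\times\mathbb{R})$ is coisotropic if and only if \eqref{TG2} holds, while $\rho\circ\widehat{\Gamma}(Q,u)$ is Lagrangian for every fixed $u$ if and only if \eqref{TG3} holds; the task is simply to recognize these two systems as coordinate expressions of the pair $d_q\widehat{\Gamma}=0$ and $\mathscr{L}_{\mathcal{R}}\widehat{\Gamma}=\sigma\widehat{\Gamma}$.

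First I would observe that $d_q\widehat{\Gamma}=0$ is literally the symmetry $\partial \widehat{\Gamma}_j/\partial q^i = \partial \widehat{\Gamma}_i/\partial q^j$ in \eqref{TG3}. Assuming it, the first and third terms of \eqref{TG2} cancel, and the coisotropic condition collapses to the pointwise proportionality $\widehat{\Gamma}_i\,\partial_u \widehat{\Gamma}_j = \widehat{\Gamma}_j\,\partial_u \widehat{\Gamma}_i$. On an open set where some component $\widehat{\Gamma}_{j_0}$ does not vanish, I set $\sigma := (\partial_u \widehat{\Gamma}_{j_0})/\widehat{\Gamma}_{j_0}$; the proportionality then forces $\partial_u \widehat{\Gamma}_j = \sigma\widehat{\Gamma}_j$ for every index $j$, which is precisely $\mathscr{L}_{\mathcal{R}}\widehat{\Gamma} = \sigma\widehat{\Gamma}$ since $\mathcal{R}=\partial_u$ in Darboux coordinates. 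The converse direction is immediate: expanding $\mathscr{L}_{\mathcal{R}}\widehat{\Gamma} = \sigma\widehat{\Gamma}$ componentwise gives back the proportionality, and together with $d_q\widehat{\Gamma}=0$ this reconstructs both \eqref{TG2} and \eqref{TG3}.

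With $d_q\widehat{\Gamma}=0$ in hand, the Poincar\'e lemma applied fibrewise in $q$ (with $u$ regarded as a smooth parameter) produces a locally defined function $f(q,u)$ with $d_q f = \widehat{\Gamma}$, i.e.\ $\partial f/\partial q^j = \widehat{\Gamma}_j$. Commuting $\partial_u$ with $d_q$ then yields
\begin{equation*}
  d_q\mathcal{R}(f) = d_q \partial_u f = \partial_u d_q f = \partial_u \widehat{\Gamma} = \sigma\widehat{\Gamma} = \sigma\,d_q f,
\end{equation*}
which is exactly \eqref{Conditions-coisotropic}; conversely, given such an $f$, the same string of equalities reproduces $d_q\widehat{\Gamma}=0$ and $\mathscr{L}_{\mathcal{R}}\widehat{\Gamma} = \sigma\widehat{\Gamma}$, completing the circle.

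The main obstacle I anticipate is the extraction of a smooth scalar $\sigma$ near the zero locus of $\widehat{\Gamma}$: at points where $\widehat{\Gamma}_j=0$ for all $j$, the proportionality $\widehat{\Gamma}_i\,\partial_u \widehat{\Gamma}_j = \widehat{\Gamma}_j\,\partial_u \widehat{\Gamma}_i$ leaves $\sigma$ undetermined, and a priori the pointwise ratios $(\partial_u \widehat{\Gamma}_{j_0})/\widehat{\Gamma}_{j_0}$ coming from different indices could fail to paste smoothly. Since the assertion is local in nature and is meant to furnish a local generating function $f$ via the Poincar\'e lemma, I would restrict attention to an open chart on which $\widehat{\Gamma}$ is nowhere zero, where $\sigma$ is unambiguously smooth, and then, if needed, extend $\sigma$ across the complement by continuity (or set $\sigma\equiv 0$ on the undetermined set, consistent with $\partial_u\widehat{\Gamma}=0$ there) so that \eqref{Conditions-coisotropic} holds on the full neighborhood used to construct $f$.
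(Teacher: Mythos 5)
The paper never proves this lemma --- it is imported verbatim from \cite[Theorem 2]{deLeon2021contact} --- so there is no internal proof to measure you against; the closest internal material is precisely the pair of coordinate conditions \eqref{TG2} (coisotropy of $\widehat{\Gamma}(Q\times\mathbb{R})$) and \eqref{TG3} (Lagrangianity of the leaves $\rho\circ\widehat{\Gamma}(Q,u)$) derived in the proof of Theorem \ref{SCHJT-2}, and your reduction to them is the natural verification. Your identifications are correct: $d_q\widehat{\Gamma}=0$ is exactly \eqref{TG3}; modulo it, \eqref{TG2} collapses to the proportionality $\widehat{\Gamma}_i\,\partial_u\widehat{\Gamma}_j=\widehat{\Gamma}_j\,\partial_u\widehat{\Gamma}_i$, which on an open set where $\widehat{\Gamma}$ is nowhere zero is equivalent to $\partial_u\widehat{\Gamma}_j=\sigma\widehat{\Gamma}_j$, i.e.\ $\mathscr{L}_{\mathcal{R}}\widehat{\Gamma}=\sigma\widehat{\Gamma}$ since $\mathcal{R}=\partial_u$; and the parametric Poincar\'e lemma together with $\partial_u\, d_q=d_q\,\partial_u$ gives \eqref{Conditions-coisotropic} and its converse. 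Relying on \eqref{TG2}--\eqref{TG3} is not circular, since the paper establishes them independently of this lemma.

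The one place you overreach is the zero locus of $\widehat{\Gamma}$, and your proposed patch there is not correct. Coisotropy imposes no condition at all on $\partial_u\widehat{\Gamma}$ at points where all components $\widehat{\Gamma}_j$ vanish, so it is not true that setting $\sigma\equiv 0$ there is ``consistent with $\partial_u\widehat{\Gamma}=0$'', nor does a continuous extension always exist: take $\widehat{\Gamma}_j(q,u)=u\,c_j$ with constants $c_j$ not all zero; then \eqref{TG2} and \eqref{TG3} hold identically (the image is coisotropic with Lagrangian leaves), yet $\partial_u\widehat{\Gamma}_j=c_j=\sigma\,u\,c_j$ admits no continuous $\sigma$ across $u=0$. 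So the conformal-factor formulation $\mathscr{L}_{\mathcal{R}}\widehat{\Gamma}=\sigma\widehat{\Gamma}$ is equivalent to coisotropy-plus-Lagrangianity only where $\widehat{\Gamma}$ is non-vanishing; across zeros the invariant condition should be stated as $\mathscr{L}_{\mathcal{R}}\widehat{\Gamma}\wedge\widehat{\Gamma}=0$ (which is exactly the proportionality and needs no division). This is really a defect of the statement as quoted rather than of your main argument --- your restriction to a chart on which $\widehat{\Gamma}\neq 0$ is the correct reading, and that restriction is harmless for the way the lemma is used (the sections of interest are of the form $d_q\widehat{S}$ near regular points) --- but the sentence suggesting an extension of $\sigma$ across the zero set by continuity or by fiat should simply be dropped.
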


\begin{remark}
    We next offer a view to understand these two kinds of HJB equations with respect to a contact structure. Given a time-dependent Hamiltonian $H:J^1Q\times \mathbb{R}_+\to\mathbb{R}$, which is indeed the random Hamiltonian $h \cdot\delta X_t$ in our case. The time-dependent HJ equation for the generating function $S:Q\times \mathbb{R}_+\to\mathbb{R}$ is given by
    \begin{equation}\label{time-dependent HJ}
        \frac{\partial S}{\partial t}+H \left( q,\frac{\partial S}{\partial q},S,t \right)=0.
    \end{equation}
    Let $f$ be a smooth function on $Q\times \mathbb{R}\times \mathbb{R}_+$ such that $f(q,u(t),t)=S(q,t)$. Let $\gamma=d_qf$ be a section of $J^1Q\times \mathbb{R}_+$ over $Q\times \mathbb{R}\times \mathbb{R}_+$. Then $$\gamma_i(q,u(t),t)=\frac{\partial f}{\partial q^i}(q,u(t),t)=\frac{\partial S}{\partial q^i}(q,t)=p_i(t).
    $$
    We differentiate both sides of \eqref{time-dependent HJ} with respect to $q^j$ and apply the contract Hamiltonian equation. Then we get
    \begin{align}\label{time-dependent HJ-section}
        0=&\frac{\partial}{\partial t}\frac{\partial S}{\partial q^j}+\frac{\partial H}{\partial q^j}+\frac{\partial H}{\partial p_i}\frac{\partial^2 S}{\partial q^i\partial q^j}+\frac{\partial H}{\partial u}\frac{\partial S}{\partial q^j}\notag\\
        =&\frac{\partial}{\partial t}[\gamma_j(q,u(t),t)]+\frac{\partial H}{\partial q^j}+\frac{\partial H}{\partial p_i}\frac{\partial \gamma_j}{\partial q^i}+\frac{\partial H}{\partial u}\gamma_j\notag\\
        =&\frac{\partial \gamma_j}{\partial t}+\frac{\partial \gamma_j}{\partial u}\dot{u}+\frac{\partial H}{\partial q^j}+\frac{\partial H}{\partial p_i}\frac{\partial \gamma_j}{\partial q^i}+\frac{\partial H}{\partial u}\gamma_j\notag\\
        =&\frac{\partial \gamma_j}{\partial t}+\frac{\partial \gamma_j}{\partial u}\left(p_i\frac{\partial H}{\partial p_i}-H\right)+\frac{\partial H}{\partial q^j}+\frac{\partial H}{\partial p_i}\frac{\partial \gamma_j}{\partial q^i}+\frac{\partial H}{\partial u}\gamma_j\notag\\
        =&\frac{\partial \gamma_j}{\partial t}+\frac{\partial \gamma_j}{\partial u}\left(\gamma_i\frac{\partial H}{\partial p_i}-H\right)+\frac{\partial H}{\partial q^j}+\frac{\partial H}{\partial p_i}\frac{\partial \gamma_j}{\partial q^i}+\frac{\partial H}{\partial u}\gamma_j,
    \end{align}
    which can be further rewritten as a time-dependent HJ equation for the section $\gamma$ under the coisotropic and Lagrangian conditions. In our case, equation \eqref{time-dependent HJ} corresponds to \eqref{SHJE-1-local-0}, and equation \eqref{time-dependent HJ-section} corresponds to \eqref{SHJ2-form} with $f=\widehat{S}$ and  $\gamma=\widehat{\Gamma}$. In other words, the second HJ equation can be formally obtained by taking the derivative with respect to the configuration variables of the first HJ equation and regarding the $(2n+1)^{st}$ variable as a function depending on time.

\end{remark}

\subsection{Stochastic Hamilton--Jacobi equations for the L.C.S. case}

we now turn to considering our problems on the framework of L.C.S. structures on the cotangent bundles \cite{Esen2021,Haller1999}. Let $(T^{\ast}Q,\omega_Q\triangleq-d\theta_Q)$ be the canonical symplectic manifold, where $Q$ is a base manifold and $\theta_Q$ is the canonical Liouville 1-form. Given a closed 1-form $\vartheta$ on $Q$, and it determines a closed semi-basic 1-form $\theta=\pi^\ast \vartheta$ with the cotangent bundle projection $\pi: T^\ast Q \to Q$. We define a 2-form 
\begin{align}\label{L-deR}
    \omega_\theta\triangleq -d_\theta (\theta_Q)\triangleq -d \theta_Q +\theta \wedge \theta_Q= \omega_Q +\theta \wedge \theta_Q
\end{align}
on the cotangent bundle $T^\ast Q$, where $d_\theta: \Omega^1(M)\to \Omega^2(M)$ is referred to as the Lichnerowicz--de Rham differential \cite{Guedira1984}. In this way, $\omega_\theta$ is a almost symplectic 2-form satisfying $d\omega_\theta=\theta\wedge \omega_\theta$, and $(T^\ast Q,\omega_\theta,\theta)$ thus determines an L.C.S. manifold with the Lee 1-form $\theta$. In short, we denote this L.C.S. manifold $(T^\ast Q,\omega_\theta,\theta)$ by simply $T_\theta^\ast Q$.

\begin{remark}
    It is important to note that all L.C.S. manifolds locally look like $T_\theta^\ast Q$ for some $Q$ and for a closed one-form $\vartheta$. And $T_\theta^\ast Q$ is conformally equivalent to a symplectic manifold if and only if $\vartheta$ is exact. %From now on, we will only focus on such a L.C.S. manifold which is exact (i.e., $\omega_\theta$ satisfies \eqref{L-deR}) in the sense of Lichnerowicz--deRham differential. 
\end{remark}

For fixed $t\in\mathbb{R}^+$, let $\bar{\Gamma}_t: Q\to T_\theta^\ast Q$ be a section of the cotangent bundle. A direct computation shows:
\begin{align}\label{vartheta-exact}
    \bar{\Gamma}_t^\ast \omega_\theta =-d_\theta \bar{\Gamma}_t,
\end{align}
that is, the pull-back of the L.C.S. structure is $d_\theta$ exact. Recall that the definition of Lagrangian submanifolds for almost symplectic manifolds is exactly the same as in the symplectic case since they are obtained at the linear level \cite{Esen2021}. According to \eqref{vartheta-exact}, the image space of $\Gamma_t$ is a Lagrangian submanifold
of $T_\theta^\ast Q$ if and only if $d_\theta \bar{\Gamma}_t=0$. As $d_\theta^2=0$, the image space of the a-form $d_\theta f$ is a Lagrangian submanifold of $T_\theta^\ast Q$ for some function $f:Q\to\mathbb{R}$.

Consider the randomized Hamiltonian vector field $V_{h \cdot\delta X_t}$ on $T_\theta^{\ast}Q$, which satisfies 
\begin{equation}
    \iota_{V_{h \cdot\delta X_t}}\omega_\theta=d_\theta (h \cdot\delta X_t) = d(h \cdot\delta X_t) - (h \cdot\delta X_t)\theta
\end{equation}
here. Assume that the following diagram is commutative:
$$
     \begin{tikzcd}[column sep=scriptsize, row sep=scriptsize]
  T_\theta^{\ast}Q
  \arrow[rr, "V_{h \cdot\delta X_t}"]
  \arrow[dd, "\pi"] & & TT_\theta^{\ast}Q
  \arrow[dd, "T\pi"] \\
  && &\\
    Q
   \arrow[rr, "V_{h \cdot\delta X_t}^{\bar{\Gamma}}"]\arrow[uu, bend left=50, "\bar{\Gamma}_t"]
   & 
      & TQ,
\end{tikzcd}
$$
and define a randomized vector
field on $Q$ as 
\begin{equation}\label{randomized vector field-LCS}
    V_{h \cdot\delta X_t}^{\bar{\Gamma}} =T\pi \circ V_{h \cdot\delta X_t} \circ \bar{\Gamma}_t.
\end{equation}
Now we are ready to write the stochastic HJ theorem for L.C.S. cotangent bundles.

\begin{theorem} (Stochastic HJ theory in the sense of L.C.S.) \label{SCJH-lcs}
Let $M=T_\theta^\ast Q$ be a canonical L.C.S. manifold with the Lee 1-form $\theta$. Consider the stochastic Hamiltonian system given in \eqref{SJHE}. Let $\bar{S}:Q\times [0,\bar{\tau}) \to \mathbb{R}$ with $\bar{S}_0=f\in C^{\infty}(Q)$ and $\bar{\tau}$ the maximal stopping time of \eqref{Q-curve-LCS}.  Then, for each $t\in [0,\bar{\tau})$, the following statements are equivalent:
\begin{itemize}
     \item[($i$)] for every semimartingale $\bar{q}(t)$ in $Q$ satisfying
    \begin{align}\label{Q-curve-LCS}
    d \bar{q}(t)=\pi_\ast  \mathcal{H}\big(X_t,d\bar{S}(\bar{q}(t),t)\big) \delta X_t,
    \end{align}
    the process $(\omega,t)\mapsto d\bar{S}(\bar{q}(t),t)$ is a Hamiltonian semimartingale solving \eqref{SJHE}. 
    \item[($ii$)] $\bar{S}$ satisfies the stochastic HJ equation
    \begin{align}\label{SHJE-LCS}
    \big[\pt_t(d\bar{S}) dt + d_\theta \left( h\left(d\bar{S}\right) \right) \cdot\delta  X_t\big]^\mathbf v = 0,
    \end{align}
which, in Darboux coordinates, can be rewritten as
     \begin{align}\label{SHJE-LCS-local}
    \frac{\partial \bar{S}}{\partial q^i}=\frac{\partial \bar{S}_0}{\partial q^i}+\int_0^t \left[
    \frac{\partial h}{\partial q^i}+\frac{\partial h}{\partial p_j}\frac{\partial^2 \bar{S}}{\partial q^i\partial q^j}-\theta_i h
    \right] \delta X_s,
    \end{align}
    with $h=h\big(q,\frac{\partial \bar{S}}{\partial q}(q,s)\big)$.
\end{itemize} 
\end{theorem}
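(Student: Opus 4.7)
The approach is to mirror the proof of Theorem \ref{SCJH-1}, replacing the 1-jet bundle $J^1Q$ with the LCS cotangent bundle $T_\theta^\ast Q$ and the contact 1-form $\eta$ with the almost symplectic form $\omega_\theta$. I would set $\bar{\Gamma}_t := d\bar{S}(\bar{q}(t),t)$ as a semimartingale on $T_\theta^\ast Q$, where $\bar{q}(t)$ is a semimartingale on $Q$ verifying \eqref{Q-curve-LCS}, and show that $\bar{\Gamma}_t$ solves the Stratonovich SDE \eqref{SJHE} if and only if $\bar{S}$ satisfies \eqref{SHJE-LCS}; this directly yields the equivalence of (i) and (ii).

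First, I would apply the Stratonovich chain rule to $\bar{\Gamma}_t$ to obtain, for every $\alpha \in \Omega^1(T_\theta^\ast Q)$,
\begin{equation*}
\int \alpha(\delta \bar{\Gamma}_t) = \int \mathcal{H}^\ast(X_t,\bar{\Gamma}_t)\bigl((d\bar{S}\circ \pi)^\ast \alpha\bigr)(\delta X_t) + \int \pi_\ast(\alpha|_{\bar{\Gamma}_t})\bigl(\pt_t d\bar{S}(\bar{q}(t),t)\bigr)\,dt,
\end{equation*}
where the first summand comes from pushing the evolution \eqref{Q-curve-LCS} forward through $d\bar{S}$ and the second from the explicit time dependence of $\bar{S}$. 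By \eqref{SJHE-integral}, $\bar{\Gamma}_t$ solves the SHS exactly when the left-hand side above equals $\int \mathcal{H}^\ast(X_t,\bar{\Gamma}_t)(\alpha)(\delta X_t)$. The task is therefore reduced to matching $\mathcal{H}^\ast(X_t,\bar{\Gamma}_t)(\alpha - (d\bar{S}\circ\pi)^\ast\alpha)$ against $\pi_\ast(\alpha|_{\bar{\Gamma}_t})(\pt_t d\bar{S})$.

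Second, I would establish an LCS analogue of Lemma \ref{lemma-1}. Using $E=\sharp(\theta)$ and the bivector $\Lambda$ dual to $\omega_\theta$ in the sense of \eqref{dual-Stratonovich}, a Darboux computation should yield the pointwise identity
\begin{equation*}
\Lambda\bigl(dh_k,\,(d\bar{S}\circ\pi)^\ast \alpha - \alpha\bigr)\bigr|_{d\bar{S}} + h_k(d\bar{S})\,\alpha(E)\bigr|_{d\bar{S}} = - \bigl(d_\vartheta(h_k\circ d\bar{S})\bigr)\bigl(\pi_\ast\alpha|_{d\bar{S}}\bigr),
\end{equation*}
where $d_\vartheta = d - \vartheta\wedge(\cdot)$ is the restriction of the Lichnerowicz--de Rham differential to $Q$. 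The crucial input here is the Lagrangian condition $\bar{\Gamma}_t^\ast \omega_\theta = 0$, encoded as $d(d\bar{S}) - \vartheta\wedge d\bar{S} = 0$ on the image, which kills the off-diagonal Hessian terms of $\bar{S}$ and allows the closed 1-form $\vartheta$ to assemble into the $d_\vartheta$ operator. Combining this identity with the expansion of $\mathcal{H}^\ast$ reduces the matching condition to
\begin{equation*}
\pi_\ast(\alpha|_{\bar{\Gamma}_t})\bigl(\pt_t d\bar{S}\cdot dt + d_\vartheta(h\circ d\bar{S})\cdot \delta X_t\bigr) = 0.
\end{equation*}

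Third, I would exploit the arbitrariness of $\alpha\in\Omega^1(T_\theta^\ast Q)$: as $\alpha$ varies, $\pi_\ast(\alpha|_{\bar{\Gamma}_t})$ exhausts every tangent covector on $Q$, so the relation above forces $\pt_t d\bar{S}\cdot dt + d_\vartheta(h\circ d\bar{S})\cdot\delta X_t = 0$ on $Q$. Applying the vertical lift $[\cdot]^{\mathbf v}$ to pull this back to $T_\theta^\ast Q$ then gives precisely \eqref{SHJE-LCS}, and the Darboux form \eqref{SHJE-LCS-local} follows by unpacking $d_\vartheta f = df - f\vartheta$ in coordinates. The main obstacle I anticipate is the bookkeeping in the second step: unlike the contact case, there is no Reeb vector field to cleanly separate horizontal and vertical directions, so one must simultaneously juggle the closed Lee form $\vartheta$, the Lichnerowicz--de Rham twist, and the extra $h_k E$ contribution to $V_{h_k}$. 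It is the Lagrangian condition that makes these disparate terms collapse into the single operator $d_\vartheta$ appearing in \eqref{SHJE-LCS}.
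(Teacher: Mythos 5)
Your proposal takes essentially the same route as the paper's proof: set $\bar{\Gamma}_t=d\bar{S}(\bar{q}(t),t)$, split $\int\alpha(\delta\bar{\Gamma}_t)$ via the Stratonovich chain rule into the pushed-forward Hamiltonian term plus the explicit-time term, use the Lagrangian condition $d_\theta(d\bar{S})=0$ together with an Abraham--Marsden-type identity to collapse the discrepancy into $d_\theta(h\circ d\bar{S})$ paired with a projected direction, and conclude by the arbitrariness of $\alpha$. The only difference is cosmetic: the paper writes the key identity through $\omega_\theta$, the musical map $\sharp$ and the vector field $\Delta=\alpha^\sharp-((d\bar{S}\circ\pi)^\ast\alpha)^\sharp$ (invoking Lemma 5.2.5 of Abraham--Marsden and $(d\bar{S})^\ast(d_\theta h)=d_\theta(h\circ d\bar{S})$), whereas you phrase it through the Jacobi bivector $\Lambda$ and the Lee vector field $E=\sharp(\theta)$ --- an equivalent formulation, which tacitly uses that the Lee vector field is vertical ($\pi_\ast\mathcal{Z}=0$), the L.C.S. analogue of Lemma \ref{lemma-1}-(i).
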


\begin{proof}
    Let $\bar{\Gamma}_t=d\bar{S}(\bar{q}(t),t)$ where $\bar{q}(t)$ satisfies \eqref{Q-curve-LCS}. By simple calculations that are similar to \eqref{d-t-gamma}, we obtain that
    \begin{align}\label{d-gamma-bar}
        \int \alpha \left( \delta \bar{\Gamma}_t \right)
        = \int \mathcal{H}^\ast\big(X_t,\bar{\Gamma}_t\big) (d\bar{S}\circ \pi)^\ast \alpha \left(\delta X_t\right) + \alpha \left( d(\pt_t\bar{S})^{\mathbf v} \right)(\bar{q}(t),t) dt,
    \end{align}
    for any $ \alpha\in \Omega(T_\theta^\ast Q)$.  
    By the definition of Lichnerowicz--de Rham differential, in this case, we have
    \begin{align}
        \mathcal{H}^\ast (x,z) (\alpha) = \omega_\theta\big((dh)^\sharp(z),\alpha^\sharp(z)\big)+h(z)\mathcal{Z}(\alpha(z))
        =\omega_\theta\big((d_\theta h)^\sharp(z),\alpha^\sharp(z)\big)
        %=-\langle d_\theta h, \alpha^\sharp\rangle
    \end{align}
    with $\omega_\theta$ the almost symplectic 2-form, $\sharp:\Omega(M)\to \mathfrak{X}(M)$ the raising action induced by $\omega_\theta$ (see Appendix C) and $\mathcal{Z}=\theta^\sharp$ the Lee vector field. 

    Assuming that the conclusion (2) holds, we aim to prove the first one. By comparing \eqref{d-gamma-bar} with the stochastic Hamiltonian system for $\bar{\Gamma}_t$, i.e.,\eqref{SJHE-integral}, and defining a vector field $\Delta=\alpha^\sharp-((d\bar{S}\circ \pi)^\ast\alpha)^\sharp$, what we need to do is to show that
%     \begin{equation}\label{Aim-lcs}
% \left\langle\omega_\theta\big((d_\theta h)^\sharp|_{\bar{\Gamma}_t},((d\bar{S}\circ \pi)^\ast \alpha)^\sharp|_{\bar{\Gamma}_t}\big), (\delta  X)\right\rangle+\left\langle\alpha|_{\bar{\Gamma}_t}, [d(d_t\bar{S})]^{\mathbf v} \right\rangle=\left\langle\omega_\theta\big((d_\theta h)^\sharp|_{\bar{\Gamma}_t},\alpha^\sharp|_{\bar{\Gamma}_t}\big), (\delta  X)\right\rangle.
%     \end{equation}
     \begin{equation}\label{Aim-lcs}
       \omega_\theta\big((d_\theta h)^\sharp (\bar{\Gamma}_t), \Delta \big) \delta  X_t = \alpha \left( d(\pt_t\bar{S})^{\mathbf v} \right) (\bar{q}(t),t) dt.
    \end{equation}
    Note that, under our assumptions, the image of $d\bar{S}_t(Q)$ is a Lagrangian submanifold of the L.C.S. manifold $T_\theta Q$ . That is, $(d\bar{S}_t)^\ast \omega_\theta=-d_\theta (d\bar{S}_t)=0$. We thus have
\begin{align}
   \omega_\theta\big((d_\theta h)^\sharp (\bar{\Gamma}_t), \Delta \big)
    &= \omega_\theta\big((d_\theta h)^\sharp (\bar{\Gamma}_t), \alpha^\sharp \big) -\omega_\theta\big((d_\theta h)^\sharp (\bar{\Gamma}_t), ((d\bar{S}\circ {\pi})^\ast\alpha)^\sharp \big)  \notag\\
    &= \omega_\theta\big((d_\theta h)^\sharp (\bar{\Gamma}_t) -\omega_\theta\big((d\bar{S}\circ \pi)_\ast(d_\theta h)^\sharp (\bar{\Gamma}_t), \alpha^\sharp \big) \notag\\
    &= \omega_\theta\big((d_\theta h)^\sharp (\bar{\Gamma}_t), (d\bar{S}\circ \pi)_\ast\alpha^\sharp (\bar{\Gamma}_t) \big) \notag\\
    &= -d_\theta h (\bar{\Gamma}_t) \cdot (d\bar{S})_\ast ({\pi}_\ast\alpha^\sharp) (\bar{\Gamma}_t)
    =-d_\theta(h\circ d\bar{S})\cdot {\pi}_\ast\alpha^\sharp (\bar{\Gamma}_t),
\end{align}
    where we have employed Lemma 5.2.5 in \cite{Marsden1978} (with respect to the almost symplectic 2-form $\omega_\theta$) in the third line, and the fact $(d\bar{S})^\ast (d_\theta h)=(d\bar{S})^\ast(dh-h\theta)=(d\bar{S})^\ast dh-(d\bar{S})^\ast h(d\bar{S})^\ast\theta=d((d\bar{S})^\ast h)-(d\bar{S})^\ast h\theta=d(h\circ d\bar{S})-(h\circ d\bar{S})\theta=d_\theta(h\circ d\bar{S})$ in the last line. Furthermore, by applying the stochastic HJ equation \eqref{SHJE-LCS}, we conclude that
    \begin{align}
        \omega_\theta\big((d_\theta h)^\sharp (\bar{\Gamma}_t), \Delta \big) \delta  X_t &= d(\pt_t\bar{S})\left({\pi}_\ast\alpha^\sharp \right) (\bar{\Gamma}_t)
        =\pi^\ast (d(\pt_t \bar{S}))\left(\alpha^\sharp \right) (\bar{\Gamma}_t) \notag\\
        &= \alpha \left( [{\pi}^\ast(d(\pt_t \bar{S}))]^\sharp \right) (\bar{\Gamma}_t) = \alpha \left( d(\pt_t\bar{S})^{\mathbf v} \right) (\bar{q}(t),t) dt, \notag
    \end{align}
    that is, equation \eqref{Aim-lcs} follows. The proof is done, as the conclusion (1) implies (2) by these arguments in the same way.
\end{proof}

\begin{remark}
    Note that, in Darboux coordinates, we have
    \begin{equation}
        \omega_\theta=dq^i\wedge dp_i+(\theta_idq^i)\wedge (p_jdq^j),\notag
    \end{equation}
    \begin{equation}
        V_{h \cdot\delta X_t}=\frac{\partial h}{\partial p_i} \cdot\delta X_t \frac{\partial}{\partial q^i}+\left[-\frac{\partial h}{\partial q^i} \cdot\delta X_t +
        \frac{\partial h}{\partial p_j} \cdot\delta X_t (\theta_jp_i-\theta_ip_j)+\theta_ih \cdot\delta X_t
        \right]\frac{\partial}{\partial p_i},\notag
    \end{equation}
    and
    \begin{equation}
        V_{h \cdot\delta X_t}^{\bar{\Gamma}}=\frac{\partial h}{\partial p_i} \cdot\delta X_t \frac{\partial}{\partial q^i}.\notag
    \end{equation}
    Take a section $\bar{\Gamma}:Q\times \mathbb{R}^+\to T_\theta^\ast Q \times \mathbb{R}^+$, $
\bar{\Gamma}(q^i,t)=\big(q^i,\bar{\Gamma}_j(q^i,t),t\big)$ 
    with 
    \begin{equation}\label{d-theta-Gamma}
        d_\theta \bar{\Gamma}=0,\quad \text{i.e.,}\quad   \frac{\partial \bar{\Gamma}_i}{\partial q^j}=\theta_j\bar{\Gamma}_i.
    \end{equation}
Then Theorem \ref{SCJH-lcs} means that the following two conditions are equivalent:
\begin{itemize}
    \item[($i$)] The time-dependent vector fields $\widetilde{X}_{h \cdot\delta X_t}=V_{h \cdot\delta X_t}+\frac{\partial}{\partial_t}$ and $\widetilde{X}_{h \cdot\delta X_t}^{\bar{\Gamma}}=V_{h \cdot\delta X_t}^{\bar{\Gamma}}+\frac{\partial}{\partial_t}$ are $\bar{\Gamma}$-related in the sense of
        \begin{align}\label{Gamma-related-t-lcs}
        \widetilde{X}_{h \cdot\delta X_t} \circ \bar{\Gamma} = T\bar{\Gamma} \circ \widetilde{X}_{h \cdot\delta X_t}^{\bar{\Gamma}},
        \end{align}
        which in coordinates reads
        \begin{equation}
            \left[-\frac{\partial h}{\partial q^i} \cdot\delta X_t + \frac{\partial h}{\partial p_j} \cdot\delta X_t (\theta_j\bar{\Gamma}_i-\theta_i\bar{\Gamma}_j)+\theta_ih \cdot\delta X_t \right]\frac{\partial}{\partial p_i}=\left(
        \frac{\partial \bar{\Gamma}_i}{\partial t}+\frac{\partial h}{\partial p_j} \cdot\delta X_t \frac{\partial \bar{\Gamma}_i}{\partial q^j}
        \right)\frac{\partial}{\partial p_i}.\notag
        \end{equation}
        \item[($ii$)] The L.C.S.-type HJ equation with respect to the section is fulfilled
        \begin{align}\label{SHJE-0-local-lcs}
    \left[\partial_t(\rho\circ\bar{\Gamma}) dt +d_\theta(h \cdot\delta X_t \circ \bar{\Gamma}_t)\right]^{\mathbf v}=0,
    \end{align}
    that is, in coordinates,
    \begin{align}%\label{SHJE-local-lcs}
    \frac{\partial \bar{\Gamma}_i}{\partial t} dt +\frac{\partial h}{\partial q^i} \cdot\delta X_t +\frac{\partial h}{\partial p_j} \cdot\delta X_t \frac{\partial \bar{\Gamma}_j}{\partial q^i}-\theta_i h \cdot\delta X_t=0.\notag
    \end{align} 
\end{itemize}
    Here $\partial_t(\rho\circ\bar{\Gamma})$ is understood as the induced tangent vector at a point $p$ and is given by
        $$
    \centering
    \begin{tikzcd}[column sep=scriptsize, row sep=scriptsize]
T_\theta^\ast Q \times \mathbb{R}^+
  \arrow[ddrr, "{\pi}_+"]
  \arrow[dd, "{{\pi}\times id}"] & & \\
  && &\\
   Q \times \mathbb{R}^+ 
   \arrow[uu, bend left=35, "\bar{\Gamma}"]
   \arrow[rr, swap, ""]
   & & 
   \mathbb{R}^+.
   \arrow[uull, swap, bend left=-30, "(\rho\circ\bar{\Gamma})"]
      %\\ {(a)} && 
\end{tikzcd}
$$
In this way, we can check that \eqref{Gamma-related-t-lcs} and \eqref{SHJE-0-local-lcs} are equivalent if and only if \eqref{d-theta-Gamma} is satisfied.
\end{remark}

\begin{remark}
    Compared with the local expressions of L.C.S. structures in Example \ref{exam-cls-local}, the L.C.S. structures here (on the cotangent bundles) are obtained by taking the change of coordinates: $$q_i|_{U_\alpha}=\tilde{q}^i_\alpha\quad \text{and}\quad p_i|_{U_\alpha}=e^{-\sigma_\alpha}\tilde{p}_i^\alpha.$$ In fact, given an open neighborhood $U_\alpha\subset M$ and a function $\sigma_\alpha \in C^\infty(U_\alpha)$ such that $(M,\bar{\omega})$ forms an L.C.S. manifold with $\bar{\omega}=e^{\sigma_\alpha}d\tilde{q}_\alpha^i\wedge d \tilde{p}_i^\alpha$, we can always choose $\theta|_{U_\alpha}=d\sigma_\alpha$ and define another almost symplectic 2-form
    \begin{align}
        \omega_\theta|_{U_\alpha}
        =&e^{\sigma_\alpha}d{q}_\alpha^i\wedge d {p}_i^\alpha=e^{\sigma_\alpha}d\tilde{q}_\alpha^i\wedge (e^{-\sigma_\alpha}d \tilde{p}_i^\alpha -e^{-\sigma_\alpha}\tilde{p}_i^\alpha d\sigma_\alpha)\notag\\
        =&d\tilde{q}_\alpha^i\wedge d \tilde{p}_i^\alpha-\tilde{p}_i^\alpha d \tilde{q}_i^\alpha \wedge d\sigma_\alpha =(\omega_Q-\theta_Q\wedge\theta)|_{U_\alpha}.\notag
    \end{align}
    
\end{remark}

\subsection{The complete family of stochastic  principal/generating functions}

Similar to the deterministic cases, we call a solution to stochastic HJ equations the \textit{stochastic principal function} or the \textit{stochastic generating function}. Notice that the main interest in the standard Hamilton--Jacobi theory lies in finding a complete family of solutions to the problem \cite{Carinena2006a}. In this section we shall discuss the notion of stochastic complete solutions for the HJ equation in the general framework of Jacobi manifolds.
%Briefly speaking, knowing a complete solution of the Hamilton--Jacobi problem for a Hamiltonian system is tantamount to completely integrating the system, namely, there is a constant of the motion for each degree of freedom of the system, and these constants of the motion are in mutual involution.

%We assume that $\mathrm{dim}(M)=m$ and $\mathrm{dim}(Q)$.

\begin{definition}
    (Stochastic complete solutions) Assume that we have an SHS given by \eqref{SJHE} on a (transitive) Jacobi manifold $(M,\Lambda,E)$ fibered over a base manifold $\mathfrak{Q}$. Consider $U\subset \mathbb{R}^d$ an open set, where $r$ is the dimension of the fiber of the bundle $\pi: M \to \mathfrak{Q}$. That is, $d=\mathrm{dim}(M)-\mathrm{dim}(\mathfrak{Q})$. A measurable map $\Phi:\mathbb{R}_+\times {\bf \Omega}\times  \mathfrak{Q} \times U\to M$ is a \textit{stochastic complete solution} if 
    % ($i$) for each $t\in \mathbb{R}_+$ and $\omega\in {\bf \Omega}$, $\Phi_t(\omega):\mathcal{Q} \times U\to \mathcal{M}$ is a local diffeomorphism, and ($ii$) for any set of parameters $\kappa=(\kappa_1,\kappa_2,\cdots,\kappa_k)\in U$, the map 
    %     \begin{align}
    %         \Phi_\kappa:& \mathcal{Q} \to \mathcal{M}\notag\\
    %         & z \mapsto \Phi_\kappa(z)=\Phi(z,\kappa_1,\kappa_2,\cdots,\kappa_k)
    %     \end{align}
    %     is a solution of the stochastic HJ equation associated with \eqref{SJHE}.
    \begin{itemize}
        \item[($i$)] For each $t\in \mathbb{R}_+$ and $\omega\in {\bf \Omega}$, $\Phi_t:\mathfrak{Q} \times U\to M$ is a (local) diffeomorphism.
        \item[($ii$)] For each set of parameters $\kappa=(\kappa_1,\kappa_2,\cdots,\kappa_r)\in U$, the map 
        \begin{align}
            \Phi^\kappa:& \mathbb{R}_+\times {\bf \Omega}\times \mathfrak{Q} \to M\notag\\
            & (t,\omega,\mathfrak{q}) \mapsto \Phi^\kappa(t,\omega,\mathfrak{q})=\Phi_t(\mathfrak{q},\kappa_1,\kappa_2,\cdots,\kappa_d)
        \end{align}
        is a solution of the stochastic HJ equation associated with \eqref{SJHE}.
    \end{itemize}
\end{definition}
For simplicity we will assume that $\Phi_t$ is a global diffeomorphism. Consider $f_i:M\to\mathbb{R}$ given the composition of $\Phi_t^{-1}$ with the projection over the $i$-th component of $\mathbb{R}^d$. We have the following diagram:

$$
     \begin{tikzcd}[column sep=scriptsize, row sep=scriptsize]
  \mathfrak{Q} \times \mathbb{R}^d
  \arrow[rr,shift left, "\Phi_t"]
  \arrow[dd, "\rho_\kappa"] & & M
  \arrow[dd, "f_i"] 
  \arrow[ll,shift left, "\Phi_t^{-1}"]\\
  && &\\
  \mathbb{R}^d
   \arrow[rr, "\pi_i"]
   & 
      & \mathbb{R}.
\end{tikzcd}
$$
That is, the function $f_i$ is defined by  
\begin{equation}\label{first-integral}
    f_i(z)=\pi_i\circ \rho_\kappa \circ \Phi_t^{-1},
\end{equation}
where $\rho_\kappa:\mathcal{Q} \times \mathbb{R}^k\to \mathbb{R}^k$ and $\pi_i:\mathbb{R}^k\to\mathbb{R}$ are canonical projections.

\begin{lemma}\label{JB-fij}
    The functions defined in \eqref{first-integral} satisfy that
    $$
    \{f_i,f_j\}=f_iE(f_j)-f_jE(f_i),\quad \forall i,j=1,2,\cdots,r.
    $$
\end{lemma}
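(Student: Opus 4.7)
The plan is to exploit two consequences of the complete-solution hypothesis: first, that the functions $f_i$ are locally constant on each leaf of the foliation determined by $\Phi_t$, and second, that these leaves are Lagrangian--Legendrian submanifolds of $(M,\Lambda,E)$. Once these are established, the identity reduces to showing that the $\Lambda$-part of the Jacobi bracket \eqref{Jacobi bracket} vanishes on every pair $(df_i, df_j)$.

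For any parameter $\kappa = (\kappa_1, \dots, \kappa_r) \in U$ and any fixed $t$, I would introduce the leaf $L^t_\kappa := \Phi^\kappa_t(\mathfrak{Q}) \subset M$. Because $\Phi_t$ is a (local) diffeomorphism, every $z \in L^t_\kappa$ satisfies $\Phi_t^{-1}(z) = (\mathfrak{q}(z), \kappa)$ for a unique $\mathfrak{q}(z) \in \mathfrak{Q}$, and the defining relation \eqref{first-integral} then gives $f_i(z) = \pi_i \circ \rho_\kappa(\mathfrak{q}(z), \kappa) = \kappa_i$. Hence $f_i$ is constant on $L^t_\kappa$, and consequently
\begin{equation*}
  df_i(z) \in (T_z L^t_\kappa)^\circ \quad \text{for every } z \in L^t_\kappa \text{ and every } i.
\end{equation*}

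Next, I would verify that each $L^t_\kappa$ is a Lagrangian--Legendrian submanifold in the sense that $\Lambda^\sharp((TL^t_\kappa)^\circ) \subseteq TL^t_\kappa \cap \mathcal{C}$. In the contact setting this is immediate from Lemma \ref{1-jet}: $\Phi^\kappa_t = j^1 S^\kappa_t$ and the $1$-jet of any function is Legendrian. In the L.C.S.\ setting it follows from $d_\theta^2 = 0$ applied to $\Phi^\kappa_t = d\bar{S}^\kappa_t$, as noted before Theorem \ref{SCJH-lcs}. In the general Jacobi setting this is precisely what characterizes stochastic HJ solutions in the complete family. Applied to $\alpha = df_i$, this gives $\Lambda^\sharp(df_i)(z) \in T_z L^t_\kappa$ along every leaf. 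Combining this with the previous observation and the defining identity $\Lambda(df_i,df_j) = df_j(\Lambda^\sharp(df_i))$ from \eqref{Sharp-Lambda},
\begin{equation*}
  \Lambda(df_i, df_j)(z) = df_j\bigl(\Lambda^\sharp(df_i)\bigr)(z) = 0 \quad \text{for every } z \in L^t_\kappa,
\end{equation*}
and since the leaves $L^t_\kappa$ foliate $M$ as $\kappa$ varies through $U$, this vanishing holds on all of $M$. Substituting into the Jacobi bracket formula \eqref{Jacobi bracket} yields
\begin{equation*}
  \{f_i, f_j\} = \Lambda(df_i, df_j) + f_i E(f_j) - f_j E(f_i) = f_i E(f_j) - f_j E(f_i),
\end{equation*}
which is the claim.

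The main obstacle I anticipate is making the Lagrangian--Legendrian property of the leaves $L^t_\kappa$ fully rigorous in the unified Jacobi setting. For the second contact formalism of Theorem \ref{SCHJT-2}, for instance, one must combine the coisotropic and Lagrangian hypotheses via Lemma \ref{coisotropic-cond} to match the submanifold picture; in the L.C.S.\ case one leverages $d_\theta^2 = 0$; and for the abstract Jacobi case this property must be absorbed into the very definition of a stochastic HJ solution. One should also confirm that the diffeomorphism property of $\Phi_t:\mathfrak{Q}\times U \to M$, together with the dimensional count $\dim U = \dim M - \dim \mathfrak{Q}$, genuinely produces a foliation of $M$ whose leaves have the correct Lagrangian--Legendrian dimension -- after which the above bracket computation is essentially algebraic.
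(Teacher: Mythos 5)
Your argument is correct and follows essentially the same route as the paper's proof: $f_i$ is constant on the image leaf $\mathrm{Im}(\Phi_t^\kappa)$, so $df_j$ lies in its annihilator, while the Lagrangian--Legendrian property forces $\Lambda^\sharp(df_i)$ to be tangent to the leaf, whence $\Lambda(df_i,df_j)=0$ and the identity follows from the definition of the Jacobi bracket \eqref{Jacobi bracket}. The only difference is cosmetic: the paper simply invokes the Lagrangian--Legendrian property of $\mathrm{Im}(\Phi_t^\kappa)$ as part of the hypothesis, whereas you additionally sketch why it holds in the contact and L.C.S.\ settings.
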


\begin{proof}
    Note that $f_i(z)=\kappa_i$ for given $z\in M$, we observe that $z\in \text{Im}(\Phi_t^\kappa)=\cap_{i=1}^kf_i^{-1}(\kappa_i)$. By our hypothesis, $\text{Im}(\Phi_t^\kappa)$ is a Lagrangian--Legendrian submanifold, we have
    $$
    \Lambda^\sharp(T(\text{Im}(\Phi_t^\kappa)))^\circ\subset T(\text{Im}(\Phi_t^\kappa)).
    $$
    Since $\Lambda^\sharp (df_i)$ is in $T(\text{Im}(\Phi_t^\kappa))$ and $df_j$ is in the annihilator of $T(\text{Im}(\phi_t^\kappa))$, we infer that  $\Lambda(df_i,df_j)=\Lambda^\sharp (df_i)(f_j)=0$. The result follows by the definition of Jacobi bracket.
\end{proof}

\begin{theorem}
\begin{itemize}

    \item[($i$)] If $\Phi$ is a complete solution of the stochastic HJ equation \eqref{SHJE-1} or \eqref{SHJE2-1} on $M=T^\ast Q\times \mathbb{R}$ (with $\mathfrak{Q}=Q$ or $\mathfrak{Q}=Q\times \mathbb{R}$ respectively), then there exist no linearly independent commuting set of first-integrals in involution \eqref{first-integral}.
    \item[($ii$)] If $\Phi$ is a complete solution of the stochastic HJ equation \eqref{SHJE-LCS} on $M=T_\theta^\ast Q$ (with $\mathfrak{Q}=Q$), then the functions defined in \eqref{first-integral} commute with respect to the Jacobi bracket if and only if  $\theta\equiv 0$ (i.e., $T_\theta^\ast Q$ reduces to a symplectic manifold). Otherwise, there exists no linearly independent commuting set of first-integrals in involution.
\end{itemize}
\end{theorem}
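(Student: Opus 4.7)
The plan is to start from the identity $\{f_i,f_j\}=f_iE(f_j)-f_jE(f_i)$ established in Lemma~\ref{JB-fij}, so that the involution hypothesis $\{f_i,f_j\}=0$ for every $i,j$ reads, at each point $z\in M$, as the vanishing of the wedge product $f(z)\wedge E(f)(z)\in\Lambda^2\R^r$, with $f=(f_1,\dots,f_r)$ and $E(f)=(E(f_1),\dots,E(f_r))$. Because $\Phi_t$ is a (local) diffeomorphism from $\mathfrak Q\times U$ onto $M$, the tuple $f$ is nonzero on an open dense subset of $M$, and on that subset the vanishing wedge forces $E(f_i)=\lambda\,f_i$ for all $i$ with a common scalar function $\lambda$. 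This reduces everything to the question of whether such a common eigenvalue $\lambda$ is compatible with the Lagrangian--Legendrian geometry of the leaves $L_\kappa=\Phi_t^\kappa(\mathfrak Q)=\{f=\kappa\}$.

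The ``if'' direction of (ii) is immediate: $\theta\equiv 0$ makes $E=\sharp(\theta)=0$, so $E(f_j)=0$ for every $j$ and $\{f_i,f_j\}=0$ follows at once from Lemma~\ref{JB-fij}.

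For both the ``only if'' direction of (ii) and the statement of (i), I would proceed by contradiction and dispose first of the sub-case $\lambda\equiv 0$. In that sub-case $df_i(E)=0$ for every $i$, so $E$ lies pointwise in $\bigcap_i\ker df_i$; at any $z\in L_\kappa$ this intersection coincides with $T_zL_\kappa$ because the $r$ independent differentials $df_i$ span the annihilator of $TL_\kappa$. Thus $E$ would be tangent to every leaf $L_\kappa$. In case (i), however, $E=-\mathcal R$, the leaf $L_\kappa$ is Legendrian by Theorem~\ref{SCJH-1}, and $\eta|_{TL_\kappa}=0$ while $\eta(\mathcal R)=1$, yielding a contradiction. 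In case (ii) with $\theta\not\equiv 0$, the defining relation $\omega_\theta(E,\cdot)=\theta$ and the Lagrangian property of $L_\kappa$ give $\theta|_{TL_\kappa}=\omega_\theta(E,\cdot)|_{TL_\kappa}=0$, and since the leaves $\{L_\kappa\}$ foliate $M$ this forces $\theta\equiv 0$, once again a contradiction.

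The remaining sub-case $\lambda\not\equiv 0$ is the main obstacle. Here $E(f_i)=\lambda f_i$ says that the $E$-flow rescales $f$ uniformly, permuting the leaves by $\phi_t^E(L_\kappa)=L_{\kappa\exp(\int \lambda)}$. The idea is to pull this scale covariance back through the identification $L_\kappa=j^1 S^\kappa(\mathfrak Q)$ in order to obtain a structural decomposition of the generating function $S^\kappa(q,t,\omega)$ in which $\partial_q S^\kappa$ depends on $\kappa$ only through the direction $\hat\kappa=\kappa/|\kappa|$. Inserting this decomposition into the stochastic HJ equation \eqref{SHJE-1} or \eqref{SHJE2-1} (respectively its L.C.S.\ analogue involving the Lichnerowicz--de Rham differential $d_\theta$) and imposing the requirement that $\Phi_t$ be a local diffeomorphism produces a rank deficiency in the Jacobian $\partial\Phi_t/\partial(\mathfrak q,\kappa)$, which violates item~(i) of the definition of a stochastic complete solution. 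This contradiction closes case~(i) and the ``only if'' half of case~(ii); combined with the previous paragraph the theorem follows.
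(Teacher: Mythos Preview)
Your approach shares the paper's starting point (Lemma~\ref{JB-fij}) and the reformulation of involution as $E(f_i)=\lambda f_i$ for a common scalar $\lambda$, but the handling of the sub-case $\lambda\not\equiv 0$ is where your proposal breaks down. The argument you sketch---pulling a ``scale covariance'' back through the generating function to produce a ``rank deficiency'' in $\partial\Phi_t/\partial(\mathfrak q,\kappa)$---is not substantiated: you never specify what the structural decomposition of $S^\kappa$ is, nor why it forces the Jacobian to drop rank. This is exactly the step the paper treats directly and much more simply. Since $E(f_i)=\lambda f_i$ with the \emph{same} $\lambda$, one has $E(f_j/f_i)=0$ for every pair; in the contact case this reads $\mathcal R(f_j/f_i)=0$, and the paper interprets this as saying the family $(f_i)$ cannot be linearly independent. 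No detour through the stochastic HJ equation or the diffeomorphism property of $\Phi_t$ is needed.

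There is also a secondary omission in your treatment of~(i): the $\lambda=0$ contradiction you give (Legendrian leaf, $\eta|_{TL}=0$, $\eta(\mathcal R)=1$) only covers the sub-case of equation~\eqref{SHJE-1}. For the sub-case of~\eqref{SHJE2-1} the leaves $L_\kappa=\widehat\Gamma_t(Q\times\R)$ are coisotropic rather than Legendrian, and the paper supplies a separate transversality argument there (if $\mathcal R$ were tangent one would have $\partial_u(\Phi_t^\kappa)_i=0$, contradicting that $\Phi_t$ is a diffeomorphism). Your proposal does not address this. Finally, in your~(ii), $\lambda=0$ case, the inference ``$\theta|_{TL_\kappa}=0$ for all $\kappa$ and the leaves foliate $M$, hence $\theta\equiv 0$'' is not valid as stated, since the $n$-dimensional distribution $\bigcup_\kappa TL_\kappa$ does not span $TM$; the correct reason is that $\theta=\pi^\ast\vartheta$ is semi-basic and each leaf projects diffeomorphically onto $Q$, so $\theta|_{TL_\kappa}=0$ already forces $\vartheta=0$. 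The paper's treatment of~(ii) is organized around the tangency of the Lee vector field $\mathcal Z$ to the Lagrangian leaves rather than your $\lambda$-dichotomy.
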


\begin{proof}
    ($i$) For the case of \eqref{SHJE-1}, it is clear that the images of the sections are integral submanifolds of $\ker(\eta)$ as they are Legendrian. Hence the Reeb vector field $\mathcal{R}$ is transverse to them and there is at least some index $0\leq j \leq k$ such that $\mathcal{R}(f_j)\neq 0$. By Lemma \ref{JB-fij}, if all the brackets $\{f_i,f_j\}$ vanish, then we have $\mathcal{R}(f_j/f_i)= 0$ which implies that $f_i$ and $f_j$ are not linearly independent for all $i,j=1,2,\cdots,r$.

    For the case of \eqref{SHJE2-1}, we can also claim that the Reeb vector field $\mathcal{R}$ is transverse to the coisotropic submanifold $\Phi_t^\kappa(Q\times \mathbb{R})$. In fact, if $\mathcal{R}$ is tangent to this submanifold, we would have $\mathcal{R}(p_i-(\Phi_t^\kappa)_i)=-\frac{\partial(\Phi_t^\kappa)_i }{\partial z}=0$. This would lead to a conflicting conclusion that $\Phi_t^\kappa$ does not depend on $z$ and it cannot be a diffeomorphism. So the result follows similarly.

    ($ii$) Note that the image of $\Phi_t^\kappa$ is a Lagrangian submanifold, that is, $d_\theta \Phi_t^\kappa=0$. The Lee vector field $\mathcal{Z}$ (which is defined by $\omega_\theta(\cdot,\mathcal{Z})=\theta$; See Appendix C) is tangent to any submanifolds of $\text{Im}(\Phi_t^\kappa)$. We can deduce that, in this L.C.S. case, $\mathcal{Z}(f_i)=0$ and $\{f_i,f_j\}=\omega_\theta(X_{f_i},X_{f_j})=0$ for all $i,j=1,2,\cdots,r$, if and only if $\theta\equiv 0$. 
\end{proof}

\subsection{Examples}

\begin{example} Consider a stochastic dissipative system $\ddot{x}+\gamma \dot{x}+ \nabla V(x)=\sigma \dot{B}_t$ on $\mathbb{R}$, as discussed in Example \ref{example2}. Assume that $j^1f(\mathbb{R})$ is a Legendrian submanifold of $\mathbb{R}^3$ for some $f\in C^\infty(\mathbb{R})$. In this case, the additional variable in the contact setting can be understood as a stochastic principal function 
$%\begin{equation}
    S=\int \left(\frac{1}{2}\dot{x}^2-V(x)-\gamma S\right)dt+\sigma x dB_t
$ %\end{equation}
with $S_0=f$, and $j^1S$ satisfies the stochastic contact Hamiltonian system \eqref{ex:model2}. Based on Theorem \ref{SCJH-1}, the corresponding stochastic contact HJ equation is 
\begin{equation}\label{example2-HJ}
    \frac{\partial S}{\partial t}=-\frac{1}{2}\left(\frac{\partial S}{\partial x} \right)^2-V(x)-\gamma S + \sigma x\dot{B}_t.
\end{equation}
Here the momentum variable is $y=\dot{x}=\frac{\partial S}{\partial x}$. by combining this condition with the fact $\dot{S}=\frac{\partial S}{\partial x}\dot{x}+\frac{\partial S}{\partial t}$, the equivalence between \eqref{ex:model2} and \eqref{example2-HJ} can be verified easily. Furthermore, we may regard $S=\int L(x,\dot{x},t)$ as a stochastic action functional (up to a constant) with $L$ being a randomized Lagrangian.

Additionally, we can also consider a general additional variable $u$ and choose a coisotropic section $\widehat{\Gamma}$ with local components $(x,\widehat{\Gamma}(x,u),u,t)$. Complement this section with a closed condition $d\widehat{\Gamma}_u=0$ and set $\widehat{\Gamma}_u=1$ for convenience. In this way, we can obtain an alternative stochastic HJ equation 
% \begin{equation}
%     \frac{\partial \widehat{\Gamma}}{\partial t}+\left[
%     V^\prime (x)+ \widehat{\Gamma} \frac{\partial \widehat{\Gamma}}{\partial x}+\widehat{\Gamma} (\widehat{\Gamma}+\gamma)-(\frac{1}{2}\widehat{\Gamma}^2+V(x)+\gamma u)
%     \right]+(-\sigma+\sigma x)\dot{B}_t=0
% \end{equation}
\begin{equation}\label{example2-HJ2}
    \frac{\partial \widehat{\Gamma}}{\partial t}=-\frac{1}{2}\widehat{\Gamma}^2-\widehat{\Gamma} \frac{\partial \widehat{\Gamma}}{\partial x}-\gamma \widehat{\Gamma} +V(x)-V^\prime (x)+\gamma u+(\sigma-\sigma x)\dot{B}_t.
\end{equation}
According to Lemma \ref{coisotropic-cond}, there exists locally a function $\widehat{S}\in C^\infty(\mathbb{R}^2\times \mathbb{R}_+)$ such that $\widehat{\Gamma}=\frac{\partial \widehat{S}}{\partial x}$. We thus can further rewrite \eqref{example2-HJ2} into one with respect to $\widehat{S}$. 
\end{example}

\begin{example}
    Consider a stochastic Gaussian isokinetic system $\dot{q}^i=p_i$, $\dot{p}_i=f_i(q)+\sigma_k^i(q)\diamond \dot{B}_t^k-\alpha \dot{q}$ on $\mathbb{R}^{2d}$, as discussed in Example \ref{ex:model3}. For fixed $t$, we start by a one-form $d\bar{S}(q^i)=\frac{\partial \bar{S}}{\partial q^j}(q^i)dq^j$ satisfying the Lagrangian condition, i.e., $$(d\bar{S})^\ast\omega_c=-d_\theta (d\bar{S})=-\left(\frac{\partial^2 \bar{S}}{\partial q^i\partial q^j }-\frac{1}{2c}\frac{\partial V}{\partial q^i}\frac{\partial \bar{S}}{\partial q^j}
    \right)dq^i\wedge dq^j=0.$$
    It is not hard to check that the condition ($i$) in Theorem \ref{SCJH-lcs} or the $d\bar{S}$-related condition \eqref{Gamma-related-t-lcs} reads
    \begin{equation}\label{Example2-HJ1}
        -\frac{\partial V}{\partial q^i}-\alpha \frac{\partial \bar{S}}{\partial q^i}+\sigma_k^i\diamond\dot{B}_t^k=\frac{\partial^2 \bar{S}}{\partial q^i\partial q^j }\frac{\partial \bar{S}}{\partial q^j }.
    \end{equation}
    On the other hand, noting that $-\frac{\partial h_k}{\partial q^i}+\theta_ih_k=\sigma_k^i$ in our setting, we can also infer that the stochastic HJ equation is
    \begin{equation}\label{Example2-HJ2}
        \frac{\partial \bar{S}}{\partial q^i}=\frac{\partial \bar{S}_0}{\partial q^i}+\int_0^t\left(p_j\frac{\partial^2 \bar{S}}{\partial q^i \partial q^j}-\frac{1}{2}\frac{\partial V}{\partial q^i}\right)ds-\sigma_k^i\delta B_s^k.
    \end{equation}
    It is not hard to check that \eqref{Example2-HJ1} and \eqref{Example2-HJ2} are equivalent if the Lagrangian condition holds.
\end{example}

% \section{Mixed-order contact Hamiltonian formalism}\label{Sec:5}

% \textcolor{red}{Some basic settings/The corresponding stochastic Hamiltonian systems and Hamilton--Jacobi equations/relationships and differences (definition/ properties/ applications...)}

\section*{Appendix}

\subsection*{(A) Symplectic manifolds}
%{\bf Symplectic manifolds: } 
Let $M$ be a manifold of even dimension $m=2n$. A symplectic structure/form is a non-degenerate and closed differential 2-form. The pair $(M,\omega)$ is called a symplectic manifold.

We define the map 
\begin{equation}\label{lowering action}
\flat: \mathfrak{X}(M) \to \Omega^1(M),\; X \mapsto X^\flat=\flat(X)=\iota_X\omega=\omega(X,\cdot)
\end{equation}
 (which is indeed an isomorphism) and its inverse map 
 \begin{equation}
 \sharp:\Omega^1(M) \to \mathfrak{X}(M) ,\; \alpha \mapsto \alpha^\sharp=\flat^{-1}(\alpha).
 \end{equation}
 These two maps $\flat$ and $\sharp$ are usually referred to as the lowering and raising actions respectively \cite[Definition 3.2.1]{Marsden1978}. A symplectic manifold $(M,\omega)$ can be regarded as a Jacobi manifold $(M,\Lambda ,0)$ of necessarily even dimension whose associated Jacobi tensor given by
\begin{equation}
\Lambda(\alpha,\beta)=\omega(\alpha^\sharp,\beta^\sharp), \quad  \forall \alpha,\beta \in \Omega^1(M).%\triangleq \omega_\sharp(\alpha,\beta)
\end{equation}
The (Poisson) bracket defined by 
\begin{equation}
\{ f,g\}=\omega\left((df)^\sharp,(dg)^\sharp \right)=\langle dg, (df)^\sharp \rangle=-\langle df, (dg)^\sharp \rangle,\quad \forall f,g\in C^\infty(M),
\end{equation}
satisfies the Leibniz identity (and, of course, weaker Leibniz identity condition). 

Clearly, in this case $\Lambda^\sharp$ in \eqref{Sharp-Lambda} is just equal to $\sharp$. The Hamiltonian vector field (associated with a smooth function $h$) satisfies  
\begin{equation}
V_h=\Lambda^\sharp(dh)=(dh)^\sharp,
\end{equation}
or equivalently, $dh=(V_h)^\flat=\iota_{V_h}\omega=\omega(V_h,\cdot)$. 

\begin{example}\label{exam-local-sym} (Symplectic Hamiltonian dynamics in Darboux coordinates)

By classical Darboux theorem \cite[Theorem 3.2.2]{Marsden1978}, denoting by $(q^1,\cdots,q^n)$ the coordinates on $Q$ and by $(q^1,\cdots,q^n,p_1,\cdots,p_n)$ those on $M=T^\ast Q$, the symplectic form becomes
$$
\omega=dq^i\wedge dp_i.
$$
Let $h$ be a smooth function on $M$. The Hamiltonian vector field is 
$$
V_h=\frac{\partial h}{\partial p_i}\frac{\partial}{\partial q^i}-\frac{\partial h}{\partial q^i}\frac{\partial}{\partial p_i}.
$$

\end{example}

\subsection*{(B) Contact manifolds}
%{\bf Contact manifolds} 
Let $M$ be a manifold of odd dimension $m=2n+1$. A contact structure/distribution is a maximally non-integrable hyperplane field $\boldsymbol{\xi}= \text{ker} \eta \subset TM$, that is, the defining differential 1-form $\eta$ is required to satisfy $\eta \wedge (d\eta)^n\neq 0$ 
(meaning that it vanishes nowhere). Such a one form $\eta$ is called a contact form. The volume form $\eta \wedge (d\eta)^n$ is called the associated contact volume element. The pair $(M,\boldsymbol{\xi})$ is commonly called a contact manifold. 

Given a contact distribution $\boldsymbol{\xi}$, its defining 1-form $\eta$ is determined up to a nowhere vanishing smooth function on  $M$. However, The condition $\eta \wedge (d\eta)^n\neq 0$ is independent of the specific choice of $\eta$. To study the dynamics of contact Hamiltonian systems, it is more convenient to consider contact manifolds with a fixed choice of contact forms \cite{Geiges2008,deLeon2019JMP}. This leads to another widespread definition of contact manifolds. That is, a contact manifold is a pair $(M,\eta)$ where $M$ is a $(2n+1)$-dimensional manifold and $\eta$ is a contact form.

Given a contact form $\eta$, we can define an isomorphism %\cite{deLeon2019JMP}
\begin{equation}\label{flat-contact}
 \flat: \mathfrak{X}(M) \to \Omega^1(M),\; X \mapsto X^\flat=\flat(X)=\iota_X d\eta +\eta(X)\eta
\end{equation}
and its inverse map is denoted by $\sharp=\flat^{-1}$.  
% \begin{equation}
% \sharp:\Omega^1(M) \to \mathfrak{X}(M) ,\; \alpha \mapsto \alpha^\sharp=\flat^{-1}(\alpha).
% \end{equation}
In particular, the vector field
\begin{equation}\label{Reeb vector field}
\mathcal{R}=\eta^\sharp=\flat^{-1}(\eta)
\end{equation}
is called the Reeb vector field of $M$, which is the unique vector field such that $\iota_{\mathcal{R}}d\eta=0$ and $\eta(\mathcal{R})=1$. The triple $(M,\Lambda, -\mathcal{R})$ is a Jacobi manifold of necessarily odd dimension with the associated tensor given by
\begin{equation}\label{Lambda-contact}
\Lambda (\alpha,\beta)= -d\eta(\alpha^\sharp,\beta^\sharp), \quad  \forall \alpha,\beta \in \Omega^1(M). % \triangleq (d\eta)_\sharp(\alpha,\beta)
\end{equation}
The (contact) bracket defined by
 \begin{equation}
  \{f,g\}=-d\eta\left((df)^\sharp,(dg)^\sharp \right)-f\mathcal{R}(g)+g\mathcal{R}(f), \quad  \forall f,g\in C^\infty(M),
 \end{equation}
satisfies the weaker Leibniz identity condition but does not satisfy Leibniz identity.

In this case, noting that $\beta=\flat(\beta^\sharp)=\iota_{\beta^\sharp} d\eta +\eta(\beta^\sharp)\eta$,
we have
\begin{align}
\Lambda^\sharp(\alpha)(\beta)&=-d\eta(\alpha^\sharp,\beta^\sharp)=\iota_{\beta^\sharp} d\eta(\alpha^\sharp)=\beta(\alpha^\sharp)-\eta(\beta^\sharp)\eta(\alpha^\sharp) \notag\\
&=\langle \beta,\alpha^\sharp \rangle-\langle \eta,\beta^\sharp \rangle \langle \eta,\alpha^\sharp \rangle =\langle \alpha^\sharp, \beta \rangle-\langle \beta,\eta^\sharp \rangle \langle \alpha,\eta^\sharp \rangle\notag\\
&=\sharp(\alpha)(\beta)-\alpha(\mathcal{R})\beta(\mathcal{R}). \notag
\end{align}
That is 
\begin{equation}\label{sharp-contact}
\Lambda^\sharp(\alpha)=\sharp(\alpha)-\alpha(\mathcal{R})\mathcal{R},
\end{equation}
which is not an isomorphism \cite[Remark 2]{deLeon2019JMP}.
Furthermore, we can obtain that 
\begin{proposition}\label{Proposition-hvf}
Let $(M,\eta)$ be a contact manifold. Given a smooth real function $h$ on $M$. The Hamiltonian vector fields satisfies the following equivalent properties:

(i) $V_h=\Lambda^\sharp(dh)-h\mathcal{R}=(dh)^\sharp-[\mathcal{R}(h)+h]\mathcal{R}$;\par
(ii) $(V_h)^\flat=dh-[\mathcal{R}(h)+h]\eta$;\par
(iii) $h=-\eta(V_h)$ and $dh=\iota_{V_h} d\eta+\mathcal{R}(h)\eta$;\par
(iv) $h=-\eta(V_h)$ and $\mathscr{L}_{V_h}\eta=f_h \eta$ with a function $f_h: M\to \mathbb{R}$ that is completely determined by $h$. In our settings, we can show that $f_h= -\mathcal{R}(h)$.
\end{proposition}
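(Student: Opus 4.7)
The plan is to verify the four characterizations by a cyclic chain of implications, starting from the definition of the Hamiltonian vector field on a Jacobi manifold and exploiting the explicit form of the contact Jacobi structure given in \eqref{contact-case}, \eqref{Reeb vector field}, \eqref{Lambda-contact} and \eqref{sharp-contact}.

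First I would derive (i) directly from \eqref{Hamiltonian-vf}. Since in the contact case $E=-\mathcal{R}$, we immediately have $V_h=\Lambda^{\sharp}(dh)-h\mathcal{R}$; plugging in \eqref{sharp-contact} with $\alpha=dh$ yields $\Lambda^{\sharp}(dh)=(dh)^{\sharp}-dh(\mathcal{R})\mathcal{R}=(dh)^{\sharp}-\mathcal{R}(h)\mathcal{R}$, and the second equality of (i) follows. To pass from (i) to (ii), I apply the lowering action $\flat$ to both sides, using that $\flat$ is linear and that $\flat(\mathcal{R})=\iota_{\mathcal{R}}d\eta+\eta(\mathcal{R})\eta=\eta$ by the defining properties of the Reeb field. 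Since $\flat\circ\sharp=\mathrm{id}$, this immediately gives $(V_h)^{\flat}=dh-[\mathcal{R}(h)+h]\eta$.

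Next, to obtain (iii) I unfold the definition \eqref{flat-contact} of $\flat$, so that (ii) reads $\iota_{V_h}d\eta+\eta(V_h)\eta=dh-[\mathcal{R}(h)+h]\eta$. Contracting both sides with $\mathcal{R}$ kills $\iota_{V_h}d\eta(\mathcal{R})=-\iota_{\mathcal{R}}d\eta(V_h)=0$ on the left and cancels $\mathcal{R}(h)$ on the right, yielding the scalar identity $\eta(V_h)=-h$, i.e.\ $h=-\eta(V_h)$. Substituting this back eliminates the $h\eta$ term and produces $\iota_{V_h}d\eta=dh-\mathcal{R}(h)\eta$, which is the second half of (iii). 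Passing from (iii) to (iv) is then a one-line application of Cartan's magic formula:
\begin{equation*}
\mathscr{L}_{V_h}\eta=\iota_{V_h}d\eta+d(\eta(V_h))=\bigl(dh-\mathcal{R}(h)\eta\bigr)+d(-h)=-\mathcal{R}(h)\eta,
\end{equation*}
so $\mathscr{L}_{V_h}\eta=f_h\eta$ holds with $f_h=-\mathcal{R}(h)$.

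Finally, I would close the loop by running each step in reverse, which is routine: Cartan's formula combined with $h=-\eta(V_h)$ recovers (iii); rewriting $\iota_{V_h}d\eta+\eta(V_h)\eta$ as $(V_h)^{\flat}$ recovers (ii); and applying $\sharp$ together with $\sharp(\eta)=\mathcal{R}$ recovers (i). Because $\flat$ is an isomorphism on a contact manifold, each stage uniquely determines $V_h$, so the equivalences are genuine rather than merely implications. The only subtle point in the argument is the scalar extraction $h=-\eta(V_h)$ from (ii); once that identity is in hand everything else is symbolic manipulation with Cartan's formula and the intrinsic identities $\iota_{\mathcal{R}}d\eta=0$, $\eta(\mathcal{R})=1$, $\sharp(\eta)=\mathcal{R}$.
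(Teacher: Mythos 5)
Your proposal is correct and follows essentially the same route as the paper's proof: derive (i) from \eqref{Hamiltonian-vf} and \eqref{sharp-contact}, pass between (i) and (ii) via the isomorphism $\flat=\sharp^{-1}$ with $\flat(\mathcal{R})=\eta$, extract $h=-\eta(V_h)$ and the identity $dh=\iota_{V_h}d\eta+\mathcal{R}(h)\eta$ by contracting with $\mathcal{R}$ (using $\iota_{\mathcal{R}}d\eta=0$, $\eta(\mathcal{R})=1$), and finish with Cartan's formula to get $f_h=-\mathcal{R}(h)$. The only cosmetic difference is that you arrange the argument as a cycle with an explicit reversal, while the paper proves the pairwise equivalences directly; the underlying computations coincide.
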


\begin{proof} (1) The property ($i$) follows from the fact $M$ is a Jacobi manifold with \eqref{sharp-contact}. That is, by \eqref{Hamiltonian-vf} and \eqref{sharp-contact}, we have
$$
V_h=\Lambda^\sharp(dh)-h\mathcal{R}=\sharp(dh)-dh(\mathcal{R})\mathcal{R}-h\mathcal{R}=\sharp(dh)-[\mathcal{R}(h)+h]\mathcal{R}.
$$
Here we have also used the fact that $dh(\mathcal{R})=h^\ast \mathcal{R} =\mathcal{R}(h)$. (2) The property ($ii$) and its equivalence with ($i$) follow immediately as $\flat=\sharp^{-1}$. (3) By the  property ($ii$) and definition \eqref{flat-contact}, we obtain that
$$
-h=\flat(V_h)(\mathcal{R})=\iota_{V_h} d\eta(\mathcal{R}) +\eta(V_h)\eta(\mathcal{R})=\eta(V_h),
$$
and
$$
dh=\flat(\sharp(dh))=\iota_{\sharp(dh)}d\eta+\eta(\sharp(dh))\eta=\iota_{V_h} d\eta+\mathcal{R}(h)\eta.
$$
It is thus clear that ($iii$) is equal to ($ii$). (4) If the  property ($iii$) holds, then by Cartan's formula, 
$$
\mathscr{L}_{V_h}\eta=d\iota_{V_h}\eta+\iota_{V_h}d\eta=d\eta(V_h)+\iota_{V_h}d\eta=-dh+\iota_{V_h}d\eta=-\mathcal{R}(h)\eta.
$$
Hence the results in ($iv$) follow with $f_h=-\mathcal{R}(h)$. Conversely, if ($iv$) holds, then we can calculate that 
$$
dh=\iota_{V_h}d\eta-\mathscr{L}_{V_h}\eta=\iota_{V_h}d\eta-f_h \eta.
$$
Note that $dh(\mathcal{R})=\iota_{V_h}d\eta(\mathcal{R})-f_h \eta(\mathcal{R})=-f_h$. The function $f_h$ is nothing other than $-\mathcal{R}(h)$. We thus conclude that ($iv$) is equal to ($iii$). The proof is completed.
\end{proof}

% \begin{remark} 
% The above properties are all regarded as intrinsic relations between (contact) Hamiltonian vector field $V_h$ and (contact) Hamiltonian function $h$. We note that some authors \cite{Geiges2008,Boyer2011completely} may define that $h=\eta(V_h)$, so that it is convenient to choose the function $1=\eta(\mathcal{R})$ as the Hamiltonian, making the Reeb vector field $\mathcal{R}$ the Hamiltonian vector field. And here the Hamiltonian vector field associated with the $h=1$ is $-\mathcal{R}$. Such a difference of the sign is not essential, and it is quite common in the literatures related to differential forms (For symplectic case, we may also find that $dh=\pm\iota_X\omega$ in different literatures). One potential advantage for the form we choose here results from the fact that, when considering a Hamiltonian system in local (Darboux) coordinates $(q^i,p_i,u)$, the variable $u$ has the same expression with the (symplectic) Hamilton's principal function $S$ (see Example \ref{exam-local-chart} and Remark \ref{remark-local-chart} below). Moreover, this form has been widely used to describe dissipation dynamics and several other types of physical systems in thermodynamics,  quantum mechanics, control theory, field theory and so on; see \cite{Bravetti2017contact,Bravetti2017entropy,Gaset2020field,Simoes2020}. 
% \end{remark} 

\begin{example}\label{exam-local-chart} (Contact Hamiltonian dynamics in Darboux coordinates) \\
By Darboux theorem \cite[Theorem 2.5.1]{Geiges2008} for contact manifold, around each point in $M=T^\ast Q\times \mathbb{R}$, we can find local coordinates $(q^i,p_i,u)$, $i=1,\cdots,n,$ such that 
\begin{equation}
\eta=du-p_i dq^i,\quad d\eta=dq^i\wedge dp_i,\;\text{ and }\;\mathcal{R}=\frac{\partial}{\partial u}.\notag
\end{equation}
 Given a smooth function $h\in C^\infty(M)$, we can associate it with the contact Hamiltonian vector field
  \begin{equation}
 V_h=\frac{\partial h}{\partial p_i}\frac{\partial}{\partial q^i}-\left(\frac{\partial h}{\partial q^i}+p_i\frac{\partial h}{\partial u}\right)\frac{\partial}{\partial p_i}+\left(p_i\frac{\partial h}{\partial p_i}-h\right)\frac{\partial}{\partial u}.\notag
 \end{equation}
According to Proposition \ref{Proposition-hvf} ($i$), we can also calculate that
\begin{align*}
\Lambda^\sharp(dh)&=\frac{\partial h}{\partial p_i}\left(\frac{\partial }{\partial q^i}+p_i\frac{\partial  }{\partial u} \right)-\left(\frac{\partial h}{\partial q^i}+p_i\frac{\partial h}{\partial u} \right)\frac{\partial }{\partial p_i}\notag\\
&=B^i(h)A_i-A_i(h)B^i
\end{align*}
and 
$$
(dh)^\sharp=B^i(h)A_i-A_i(h)B^i-\mathcal{R}(h)\mathcal{R},
$$
where
$$A_i=\left(\frac{\partial }{\partial q^i}+p_i\frac{\partial}{\partial u} \right)\quad \text{ and }\quad B^i=\frac{\partial}{\partial p_i}.
$$
Note that the above local vector fields satisfy $d\eta(A_i,A_j)=d\eta(B^i,B^j)=0$, $d\eta(A_i,B^j)=\delta_{ij}$, and $\{A_1,B^1,\cdots,A_n,B^n,\mathcal{R}\}$ forms a dual basis corresponding to $\{dq^1,dp_1,\cdots, dq^n,dp_n,\eta\}$ \cite[Theorem 2]{deLeon2019JMP}. But this basis is not a coordinate basis of any chart, since the Lie brackets $[A_i,B^i]=-\mathcal{R}$ ($\forall i=1,\cdots,n$) which do not vanish. Moreover, we obtain the following distributions
\begin{align*}
&\boldsymbol{\xi}=\text{ker}\,\eta=<A_1,B^1,\cdots,A_n,B^n>,\\
&\boldsymbol{\xi}^v=\text{ker}\,d\eta=<\mathcal{R}>,
\end{align*}
which are referred to as horizontal and vertical distributions respectively, and satisfy the Whitney sum decomposition $TM=\boldsymbol{\xi} \oplus \boldsymbol{\xi}^v.$
\end{example}

% \begin{remark}\label{remark-local-chart}
% In physical setting,  the variables $q^i$, $p_i$ in Example \ref{exam-local-chart}  commonly represent the configuration and the momenta respectively, but, to our knowledge, there is no standard physical meaning for $u$ so far. It is worth mentioning that this variable is just the time $t$, when considering a time-dependent Hamiltonian system on the extended space endowed with the Poincaré--Cartan one-form (that is, $\eta=dt-p_i dq^i$). In addition, notice that $\dot{u}=p_i\frac{\partial H}{\partial p_i}-H=p_i\dot{q}-H$. Thus one may regard this variable as the (symplectic) Hamilton's principal function $S$, which comes from the (symplectic) Hamilton--Jacobi equation and has the form $S(q^i,t)=\int L(q^i,\dot{q}^i,t)dt$ with $L$ being the (symplectic) Lagrangian function. 
% \end{remark}

\subsection*{(C) Locally Conformally Symplecyic (L.C.S.) manifolds}

Let $M$ be an even-dimensional manifold equipped with a nondegenerate (but not necessarily closed) 2-form $\bar{\omega}$. That is, $(M,\bar{\omega})$ is a so called almost symplectic manifold. This manifold turns out to be a symplectic one if $\bar{\omega}$ is additionally closed. The L.C.S. manifolds are regarded as an intermediate step between almost symplectic manifolds and symplectic manifolds \cite{Vaisman1985}, for which the two-form $\bar{\omega}$ is closed locally up to a conformal parameter. 

There are two equivalent definitions of L.C.S. manifolds. One is local: an almost symplectic manifold $(M,\bar{\omega})$ is said to be L.C.S., if there exists an open neighborhood $U_\alpha$ around each point $z$ in $M$, and a function $\sigma_\alpha\in C^\infty(U_\alpha)$ such that $d(e^{-\sigma_\alpha} \bar{\omega}|_\alpha)=0$, so $(U_\alpha,\omega_\alpha\triangleq e^{-\sigma_\alpha} \bar{\omega}|_\alpha)$ is symplectic manifold. And the other is global: $(M,\bar{\omega})$ is L.C.S. if there exists  a closed 1-form $\theta$ such that
 $$
 %d\theta=0 \quad \text{and}\quad 
 d\bar{\omega}=\theta \wedge \bar{\omega}.
 $$
 This 1-form $\theta$ is called Lee 1-form. Clearly, it is locally exact in the sense of $\theta|_{U_\alpha}=d\sigma_\alpha$. %Usually, an L.C.S. manifold is  denoted by a triple $(M,\bar{\omega},\theta)$. 
 If $U_\alpha=M$, that is, $\theta$ is exact, then the manifold is said to a globally conformally symplectic (G.C.S.) manifold. If $\theta=0$, then it is degenerated into a symplectic manifold. 

Similar to \eqref{lowering action}, the non-degeneracy of the 2-form $\bar{\omega}$ leads us to define a isomorphism
\begin{equation}\label{lowering action-lcs}
\flat: \mathfrak{X}(M) \to \Omega^1(M),\; X \mapsto X^\flat=\flat(X)=\iota_X\bar{\omega}=\bar{\omega}(X,\cdot)
\end{equation}
and its inverse map $\sharp=\flat^{-1}$. Given an L.C.S. manifold $(M,\bar{\omega},\theta)$, the vector field
\begin{equation}\label{anti-Lee vf}
\mathcal{Z}=\theta^\sharp=\flat^{-1}(\theta),
\end{equation}
is called Lee vector field, and satisfies $\iota_{\mathcal{Z}}\bar{\omega}=\theta$ and $\theta(\mathcal{Z})=0$. Define a bivector $\Lambda$ on $M$ as
\begin{equation}\label{Lambda-LCS}
\Lambda (\alpha,\beta)= \bar{\omega}(\alpha^\sharp,\beta^\sharp), \quad  \forall \alpha,\beta \in \Omega^1(M). 
\end{equation}
It is clear that the triple $(M,\Lambda,\mathcal{Z})$ is an even dimensional Jacobi manifold, and it is not Poisson when $\theta\neq 0$. And the bracket on an L.C.S. manifold is defined as
 \begin{equation}
\{f,g\}=\bar{\omega}\left((df)^\sharp,(dg)^\sharp \right)+f\mathcal{Z}(g)-g\mathcal{Z}(f)=\bar{\omega}\left(V_f,V_g \right), \quad \forall f,g\in C^\infty(M).
 \end{equation}
 
 Accordingly, $\Lambda^\sharp=\sharp$ and the Hamiltonian vector field is given by
\begin{equation}
V_h=(dh)^\sharp+h\mathcal{Z}.
\end{equation}

% Based on the local definition of L.C.S. manifold, we also summarize 
 \begin{example}\label{exam-cls-local}
 (The local expressions for Hamiltonian dynamics on the L.C.S. manifold)
 Based on the local definition of L.C.S. manifold, we consider the Darboux coordinates $(q_\alpha^i,p_i^\alpha)$ on $U_\alpha$. That is, the corresponding symplectic 2-form is $\omega_\alpha=dq_\alpha^i\wedge d p_i^\alpha.$ And the almost symplectic 2-form on $M$ is 
 $$
 \bar{\omega}=e^{\sigma_\alpha}dq_\alpha^i\wedge d p_i^\alpha,
 $$
 with $\sigma_\alpha\in C^\infty(U_\alpha)$.
 Clearly, $d\bar{\omega}=\theta \wedge \bar{\omega}=e^{\sigma_\alpha}d\sigma_\alpha\wedge dq_\alpha^i\wedge d p_i^\alpha$, where the Lee 1-form $\theta$ is given by $\theta|_{U_\alpha}=d\sigma_\alpha=\frac{\partial \sigma_\alpha}{\partial q^i}dq^i+\frac{\partial \sigma_\alpha}{\partial p_i}dp_i$.

  For a (local) Hamiltonian function $h_\alpha$ on this chart, we write down the (local) Hamiltonian vector field $X_\alpha$ by $dh_\alpha=\iota_{X_\alpha}\omega_\alpha$. To arrive at a global setting, we introduce a real valued function $h$ on the whole $M$ which is defined by
  $$
h|_{U_\alpha}=e^{\sigma_\alpha}h_\alpha.
  $$
  %Keep in mind that $\omega_\alpha= e^{\sigma_\alpha} \bar{\omega}|_\alpha$. 
  A direct calculation shows that
  $
\iota_{X_\alpha}\bar{\omega}|_{\alpha}=dh|_{\alpha}-h|_{\alpha}d\sigma_\alpha,
  $
  whose terms all have global realizations. We thus have
  $$
\iota_{V_h}\bar{\omega}=dh-h\theta,
  $$
  where $V_h$ is the vector field obtained by gluing all $X_\alpha$s, that is, ${V_h}|_{U_\alpha}=X_\alpha$.

  In addition, for the (local) symplectic manifold $(U_\alpha,{\omega}_\alpha)$ and two local functions $f_\alpha$, $g_\alpha$, we can define the local bracket
  \begin{equation}
\{f|_\alpha,g|_\alpha\}=e^{-\sigma_\alpha}\{e^{\sigma_\alpha}f_\alpha,e^{\sigma_\alpha}g_\alpha \}, \notag
 \end{equation}
 where the bracket on the right-hand side is the canonical Poisson bracket defined by means of the local symplectic 2-form $\omega_\alpha$, and $f|_\alpha=e^{\sigma_\alpha}f_\alpha$, $g|_\alpha=e^{\sigma_\alpha}g_\alpha$ stand for the local realizations of the two global function $f$, $g$, respectively.
 \end{example}

 \subsection*{(D) A summary table}

We summarize and compare the geometric structures of contact, cosymplectic, symplectic and L.C.S. manifolds in the following table.

\begin{table}[!htb]\label{table1}\centering
\resizebox{0.9\columnwidth}{!}{
\begin{tabularx}{\textwidth}{ p{2cm} p{3cm} p{3cm} p{2.5cm} p{3.5cm} p{0cm} }
\toprule
 Structures  & \centering Contact  & \centering Cosymplectic  & \centering Symplectic  & \centering L.C.S. & \\
\midrule
Symbols         & \centering $(M,\eta)$                           & \centering $(M,\Omega,\bar{\eta})$                             & \centering $(M,\omega)$         & \centering $(M,\bar{\omega},\theta)$ &  \\
\midrule
Characteri- zations         &  \centering  $\eta\wedge (d\eta)^n \neq 0$                           &  \centering  $d\bar{\eta}=0,d\Omega=0,$  $\eta\wedge \Omega^n \neq 0$                            & \centering  $d\omega=0$,

$\omega^n\neq 0$       & \centering  $d\theta=0$, \quad $d\bar{\omega}=\theta\wedge \bar{\omega}$ &  \\
\midrule
 Morphisms        & \centering 
 $\flat(X)=\iota_X d\eta +\eta(X)\eta,$ $\sharp=\flat^{-1}$                         & \centering  $\flat(X)=\iota_X \Omega +\bar{\eta}(X)\bar{\eta},$
 
 $\sharp=\flat^{-1}$                            &\centering 
 $\flat(X)=\iota_X \omega,$  $\sharp=\flat^{-1}$         & \centering $\flat(X)=\iota_X \bar{\omega},$  $\sharp=\flat^{-1}$ &  \\
\midrule
Induced 

Jacobi 

structures        &  \centering
 $\Lambda_\eta=-d\eta(\sharp(\cdot),\sharp(\cdot)),$ 
 
$E=\mathcal{R}=\sharp(\eta),$ 

$\Lambda^\sharp(\alpha)=\sharp(\alpha)-\alpha(\mathcal{R})\mathcal{R}$
& \centering 
$\Lambda_{\Omega,\bar{\eta}}=\Omega(\sharp(\cdot),\sharp(\cdot)),$ 

$E=0,$ 

$\bar{\mathcal{R}}=\sharp(\bar{\eta}),$

$\Lambda^\sharp \neq \sharp$ 
& \centering 
$\Lambda_{\omega}=\omega(\sharp(\cdot),\sharp(\cdot)),$ 

$E=0,$ 

$\Lambda^\sharp = \sharp$       &  \centering 
$\Lambda_{\bar{\omega}}=$

$\bar{\omega}(\sharp(\cdot),\sharp(\cdot)),$ 

$E=\mathcal{Z}=\sharp(\theta),$ 

$\Lambda^\sharp = \sharp$ &  \\

\midrule

Hamiltonians & \centering $V_h=\sharp(dh)-[\mathcal{R}(h)+h]\mathcal{R}$ 
& \centering $V_h=\sharp(dh)$

& \centering $V_h=\sharp(dh)$

& \centering $V_h=\sharp(dh)+h\mathcal{Z}$ & \\
\midrule
Local 

expressions & \centering $(q_i,p_i,u)\in T^\ast Q \times \mathbb{R}$

$\eta=du-p_idq^i,$

$\Lambda_\eta=\left(\frac{\partial}{\partial q^i}+p_i\frac{\partial}{\partial u}\right)\wedge \frac{\partial}{\partial p_i},$

$E=\mathcal{R}=\frac{\partial}{\partial u},$

$V_h= \frac{\partial H}{\partial p_i}\frac{\partial}{\partial q^i}-\left(\frac{\partial H}{\partial q^i}+p_i \frac{\partial H}{\partial z} \right)\frac{\partial }{\partial p_i}-\left(p_i \frac{\partial H}{\partial p_i} - H\right)\frac{\partial }{\partial u} $

& \centering $(q_i,p_i,t)\in T^\ast Q \times \mathbb{R}$

$\bar{\eta}=dt,$ $\Omega=dq^i\wedge dp_i,$

$\Lambda_{\Omega,\bar{\eta}}=\frac{\partial}{\partial q^i}\wedge \frac{\partial}{\partial p_i},$

$E=0,$ $\bar{\mathcal{R}}=\frac{\partial}{\partial t},$

$V_h= \frac{\partial H}{\partial p_i}\frac{\partial}{\partial q^i}+ \frac{\partial H}{\partial q^i}\frac{\partial}{\partial p_i}+\frac{\partial}{\partial t}$

& \centering $(q_i,p_i)\in T^\ast Q$

$\omega=dq^i\wedge dp_i,$

$\Lambda_\omega=\frac{\partial}{\partial q^i}\wedge \frac{\partial}{\partial p_i},$

$E=0,$

$V_h= \frac{\partial H}{\partial p_i}\frac{\partial}{\partial q^i}+ \frac{\partial H}{\partial q^i}\frac{\partial}{\partial p_i}$

& \centering $(q^i_\alpha,p_i^\alpha)\in U_\alpha\subset T^\ast Q$

$\theta=d\sigma_\alpha,$

$\bar{\omega}=e^{\sigma_\alpha}dq_\alpha^i\wedge d p_i^\alpha,$

$\Lambda_{\bar{\omega}}=e^{-\sigma_\alpha}\frac{\partial}{\partial q_\alpha^i}\wedge \frac{\partial}{\partial p_i^\alpha},$

$E=\mathcal{Z}=e^{-\sigma_\alpha}\Big(\frac{\partial \sigma_\alpha}{\partial p_i^\alpha}\frac{\partial}{\partial q_\alpha^i}-\frac{\partial \sigma_\alpha}{\partial q_\alpha^i}\frac{\partial}{\partial p_i^\alpha}\Big),$

$V_h= e^{-\sigma_\alpha}\Big(\frac{\partial H}{\partial p_i}\frac{\partial}{\partial q^i}+ \frac{\partial H}{\partial q^i}\frac{\partial}{\partial p_i}\Big)$

& \\
\bottomrule
\end{tabularx}
}
\caption{Four subclasses of the Jacobi structures.}
%\caption{Geometric structure. The sharp-lambda morphism $\Lambda^\sharp: \Omega^1(M)\to \mathfrak{X}(M)$ is a $C^\infty(M,\mathbb{R})$ linear mapping induced by $\Lambda$ between the $C^\infty$ modules of 1-form $\Omega^1(M)$ and vector fields $ \mathfrak{X}(M)$ defined on $M$. The flat morphism $\flat: \mathfrak{X}(M) \to \Omega^1(M)$ is a mapping between $C^\infty$ modules, and its inverse is denoted by $\sharp$. (We also remark that $\eta$ is referred to as the contact 1-form for contact case, $\Omega$ is referred to as the symplectic 2-form for symplectic case, and $\gamma$ is referred to as the Lee 1-form for L.C.S. case.)}
\end{table}

%\section*{Acknowledgments}
%This work was partly supported by ... We thank  ... for discussions. 

{\footnotesize
\bibliographystyle{alpha}
\bibliography{Wei-refs}
}

\end{document}